\documentclass[11pt, leqno]{amsart}

\usepackage{etex}
\usepackage{float}
\usepackage{graphics}
\usepackage{amssymb,amsmath,amsthm,amscd}
\usepackage{mathrsfs}
\usepackage{enumerate}
\usepackage{color}
\usepackage[all]{xy}
\usepackage{pictexwd}

\setlength{\textwidth}{15cm}
\setlength{\textheight}{20cm}
\setlength{\oddsidemargin}{0.0cm}
\setlength{\evensidemargin}{0.0cm}

\newtheorem{theorem}{Theorem}[section]
\newtheorem{prop}[theorem]{Proposition}
\newtheorem{corollary}[theorem]{Corollary}
\newtheorem{lemma}[theorem]{Lemma}

\theoremstyle{definition}
\newtheorem{definition}[theorem]{Definition}
\newtheorem{example}[theorem]{Example}
\newtheorem{remark}[theorem]{Remark}

\numberwithin{equation}{section}

\numberwithin{equation}{section}

\newcommand{\C}{{\mathbb C}}

\newcommand{\Z}{{\mathbb Z}}
\newcommand{\B}{{\mathbf{B}}}

\newcommand{\V}{{\mathbf V}}
\newcommand{\W}{{\mathbf W}}

\newcommand{\Prop}{\begin{prop}}
\newcommand{\enprop}{\end{prop}}
\newcommand{\Lemma}{\begin{lemma}}
\newcommand{\enlemma}{\end{lemma}}
\newcommand{\Th}{\begin{theorem}}
\newcommand{\enth}{\end{theorem}}

\newcommand{\Cor}{\begin{corollary}}
\newcommand{\encor}{\end{corollary}}

\newcommand{\Def}{\begin{definition}}
\newcommand{\edf}{\end{definition}}

\newcommand{\g}{{\mathfrak{g}}}

\newcommand{\qn}{{\mathfrak{q}(n)}}

\newcommand{\Hom}{\operatorname{Hom}}
\newcommand{\End}{\operatorname{End}}

\newcommand{\tensor}{\otimes}

\newcommand{\eq}{\begin{eqnarray}}
\newcommand{\eneq}{\end{eqnarray}}

\newcommand{\eqn}{\begin{eqnarray*}}
\newcommand{\eneqn}{\end{eqnarray*}}
\newcommand{\op}{{\operatorname{op}}}
\newcommand{\on}{\operatorname}

\newcommand{\QED}{\end{proof}}
\newcommand{\Proof}{\begin{proof}}

\newcommand{\ba}{\begin{array}}
\newcommand{\ea}{\end{array}}

\newcommand{\bi}{\begin{enumerate}[{\rm(i)}]}

\newcommand{\eqsub}{\begin{subequations}\begin{eqnarray}}
\newcommand{\eneqsub}{\end{eqnarray}\end{subequations}}

\newcommand{\ol}{\overline}

\newcommand{\nc}{\newcommand}
\nc{\la}{\lambda}
\nc{\lam}{\lambda}
\nc{\U}[1][\g]{U_q(#1)}
\nc{\te}{\tilde{e}}
\nc{\tei}{\tilde{e}_i}
\nc{\tf}{\tilde{f}}
\nc{\tfi}{\tilde{f}_i}
\nc{\tU}{\widetilde U_q(\g)}
\nc{\tE}{\tilde{E}}
\nc{\tF}{\tilde{F}}

\nc{\tk}{\tilde{k}}
\nc{\tkone}{\tk_{\ol{1}}}
\nc{\teone}{\tilde{e}_{\ol{1}}}
\nc{\tfone}{\tilde{f}_{\ol{1}}}

\nc{\teibar}{\tilde{e}_{\ol{i}}} \nc{\tfibar}{\tilde{f}_{\ol{i}}}
\nc{\tki}{{\tk}_{\ol {i}}}
\newcommand{\td}{\widetilde{d}}

\nc{\BZ}{{\mathbb{Z}}}
\nc{\al}{\alpha}
\nc{\qs}{{q}}
\nc{\lan}{\langle}
\nc{\ran}{\rangle}
\nc{\re}{{\mathrm{re}}}
\nc{\wt}{\operatorname{wt}}
\nc{\ch}{\operatorname{ch}}
\nc{\Uf}[1][\g]{U^-_q(#1)}
\nc{\Ue}{U^+_q(\g)}
\nc{\eps}{\varepsilon}
\nc{\vphi}{\varphi}
\nc{\sphi}{\varphi^*}
\nc{\seps}{\varepsilon^*}

\nc{\nn}{\nonumber}

\nc{\vp}{\varpi}
\nc{\cls}{{\operatorname{cl}}}
\nc{\Wt}{{\operatorname{Wt}}}
\nc{\Us}{U'_q(\g)}
\nc{\La}{\Lambda}
\nc{\ro}{{\rm(}}
\nc{\rf}{{\rm)}}
\nc{\norm}{{\mathrm{norm}}}
\nc{\qbox}{\quad\mbox}
\nc{\braid}{{\mathfrak{B}}}
\nc{\Ad}{\operatorname{Ad}}
\nc{\Aut}{\operatorname{Aut}}
\nc{\dt}[1]{\tilde{\tilde #1}}
\nc{\Sn}{S^{{\mathrm{norm}}}}
\nc{\aff}{{\mathrm{aff}}}
\nc{\rk}{{\mathrm{rk}}}
\nc{\tQ}{\widetilde{Q}}
\nc{\tP}{\widetilde{P}}
\nc{\tW}{\widetilde{W}}
\nc{\Dyn}{\mathrm{Dyn}}
\nc{\tD}{\widetilde{\Delta}}
\nc{\height}{{\operatorname{ht}}}
\nc{\bl}{\bigl}
\nc{\br}{\bigr}
\nc{\Hecke}{\mathrm{H}}
\nc{\HA}{\Hecke^{\mathrm{A}}}
\nc{\HB}{\Hecke^{\mathrm{B}}}
\nc{\K}{\mathrm{K}}
\newcommand{\scbul}{{\,\raise1pt\hbox{$\scriptscriptstyle\bullet$}\,}}
\nc{\vac}{{\phi}}
\nc{\Bt}{\B_\theta(\g)}
\nc{\be}{\begin{enumerate}}
\nc{\ee}{\end{enumerate}}
\nc{\low}{{\mathrm{low}}}
\nc{\upper}{{\mathrm{up}}}
\nc{\Zodd}{\Z_{\mathrm{odd}}}
\nc{\Ft}[1][n]{\mathbb{P}\mathrm{ol}_{#1}}
\nc{\Ftf}[1][n]{\widetilde{\mathbb{P}\mathrm{ol}}_{#1}}
\nc{\KA}{\on{K}^{\mathrm{A}}}
\nc{\KB}{\on{K}^{\mathrm{B}}}
\nc{\Res}{\on{Res}}
\nc{\Fc}[1][{n,m}]{\mathbf{F}_{#1}}
\nc{\tphi}{\tilde{\varphi}}
\nc{\CO}{\mathscr{O}}
\nc{\inte}{\mathrm{int}}
\nc{\Oint}{\mathcal{O}^{\ge0}_{\inte}}
\nc{\vs}{\vspace}
\nc{\tL}{\widetilde{L}}
\nc{\noi}{\noindent}

\nc{\DD}{\overrightarrow{D}}
\nc{\BB}{\overrightarrow{B}_{r,s}}
\nc{\flip}{\operatorname{flip}}
\nc{\mix}{\V^{\tensor r} \tensor \W^{\tensor s}}
\nc{\ii}{{\mathbf i}}
\nc{\jj}{{\mathbf j}}
\nc{\iiL}{{\mathbf i}^L}
\nc{\iiR}{{\mathbf i}^R}
\nc{\jjL}{{\mathbf j}^L}
\nc{\jjR}{{\mathbf j}^R}
\nc{\idj}{_{\ii} d _{\jj}}
\nc{\wtd}{\wt(_{\ii} d _{\jj})}
\nc{\Vr}{\V^{\tensor r}}
\nc{\Vrs}{\V^{\tensor (r+s)}}
\nc{\Ws}{\W^{\tensor s}}
\nc{\Flip}{\operatorname{Flip}}
\nc{\Endc}{\End_{\C}}
\nc{\GL}{{\rm GL}_n(\C)}
\nc{\Vk}{\V^{\tensor k}}
\nc{\kk}{\mathbf{k}}
\nc\gln{\mathfrak{gl}(n|n)}

\newenvironment{rouge}
{\color{red}}
{}

\nc{\bred}{\begin{rouge}}
\nc{\ered}{\end{rouge}}

\newlength{\mylength}
\setlength{\mylength}{\textwidth}
\addtolength{\mylength}{-20ex}

\title[Mixed Schur-Weyl-Sergeev duality]
{Mixed Schur-Weyl-Sergeev duality \\ for queer Lie superalgebras}

\author[J. H. Jung, S.-J. Kang]{Ji Hye Jung$^{1}$, Seok-Jin Kang$^{2}$}

\address{Department of Mathematical Sciences
         and
         Research Institute of Mathematics \\
         Seoul National University \\ Seoul 151-747, Korea}

         \email{jhjung@math.snu.ac.kr}

\address{Department of Mathematical Sciences
         and
         Research Institute of Mathematics \\
         Seoul National University \\ Seoul 151-747, Korea}

         \email{sjkang@snu.ac.kr}

\thanks{$^{1}$This work was supported by NRF Grant \# 2011-0027952
and by the second stage of the BK 21 Project,
The Development Project of Human Resources in Mathematics, KAIST in 2011.}
\thanks{$^{2}$This work was supported by NRF Grant \#
2012-005700 and NRF Grant \# 2011-0027952.}

\keywords{mixed Schur-Weyl-Sergeev duality, queer Lie superalgebras, walled Brauer superalgebras}
\subjclass[2000]{17B10, 05E10}

\begin{document}

\begin{abstract}
We introduce a new family of superalgebras $\BB$ for $r, s \ge 0$
such that $r+s>0$,
which we call the walled Brauer superalgebras, and prove the mixed
Scur-Weyl-Sergeev duality for queer Lie superalgebras. More
precisely, let $\mathfrak {q}(n)$ be the queer Lie superalgebra, $\V
=\mathbb{C}^{n|n}$ the natural representation of $\mathfrak{q}(n)$
and $\W$ the dual of $\V$. We prove that, if $n \ge r+s$, the
superalgebra $\BB$ is isomorphic to the supercentralizer algebra
$\End_{\qn}(\mix)^{\op}$ of the $\mathfrak{q}(n)$-action on the
mixed tensor space $\V^{\otimes r} \otimes \W^{\otimes s}$. As an
ingredient for the proof of our main result, we construct a new
diagrammatic realization $\DD_{k}$ of the Sergeev superalgebra
$Ser_{k}$. Finally, we give a presentation of $\BB$ in terms of
generators and relations.
\end{abstract}

\maketitle

\section*{Introduction}
The general linear group ${\rm GL}_n(\C)$ of $n \times n$ invertible
complex matrices acts on $V=\C^n$ by matrix multiplication. It is
easy to see that the diagonal action of ${\rm GL}_n(\C)$ is
well-defined on the $k$-fold tensor space $V^{\tensor k}$. On the
other hand, the symmetric group $\Sigma_k$ acts on $V^{\otimes k}$
by place permutation. Clearly, these actions commute with each
other. Moreover, Schur proved that, if $n \ge k$, then these two
group actions generate the full centralizer of each other \cite{S1,
S2}. This celebrated result, often referred to as the {\it
Schur-Weyl duality}, connects the representation theories of ${\rm
GL}_n(\C)$ and $\Sigma_k$ in a fundamental way.

There are several generalizations of the Schur-Weyl duality. For
example, consider the \emph{mixed tensor space} $V^{\tensor r}
\tensor W^{\tensor s}$, where $V=\C^n$ is the natural representation
of $\GL$ and $W=V^*$ is its dual. The vector spaces $W^{\tensor s}$
and  $V^{\tensor r} \tensor W^{\tensor s}$ inherit $\GL$-module
structures in a natural way. To describe the centralizer algebra
$\End_{\GL}(V^{\tensor r} \tensor W^{\tensor s})$, Koike and Turaev
independently introduced the notion of \emph{walled Brauer algebra}
$B_{r,s}(n)$ \cite{Ko, T}. (See also \cite{BCHLLS}.) As is the case
with the pair ${\rm GL}_n(\C)$ and $\Sigma_k$, the actions of $\GL$
and $B_{r,s}(n)$ on the mixed tensor space $V^{\otimes r} \otimes
W^{s}$ generate the full centralizers of each other whenever $n \ge
r+s$. It is called the \emph{mixed Schur-Weyl duality}.

On the other hand, in \cite{Ser}, Sergeev considered the \emph{queer
Lie superalgebra} $\qn$ and its natural representation
$\V=\C^{n|n}$. To describe the supercentralizer algebra
$\End_{\qn}(\V^{\tensor k})$, Sergeev introduced a super-extension
of the symmetric group $\Sigma_k$, denoted by $Ser_{k}$, and showed
that $Ser_k$ is isomorphic to $\End_{\qn}(\V^{\tensor k})$ whenever
$n \ge k$. Moreover, he obtained a decomposition of $V^{\tensor k}$
into a direct sum of irreducible $(\qn, Ser_k)$-bimodules. The
superalgebra $Ser_k$ is called the {\it Sergeev superalgebra} and
the duality thus obtained is called the {\it Schur-Weyl-Sergeev
duality}.  (See \cite[Section 13]{Kl} for more details.)

In this paper, we prove the mixed Schur-Weyl-Sergeev duality for the
action of $\qn$ on the mixed tensor space $\V^{\tensor r} \tensor
\W^{\tensor s}$, where $\V=\C^{n|n}$ is the natural representation
of $\qn$ and $\W=\V^*$ is the dual of $\V$. To prove our main
result, we first construct a new diagrammatic realization of
$Ser_{k}$, denoted by $\DD_k$, which has a basis consisting of
$k$-superdiagrams. A {\it $k$-superdiagram} is a diagram with
$k$-vertices on its top and bottom row and the $k$ edges connecting
vertices such that each vertex on the top row is connected to
exactly one vertex in the bottom row and each edge may (or may not)
be marked. The multiplication on $\DD_{k}$ is defined by {\it marked
concatenation} of diagrams with sign.   We show that the
superalgebra $\DD_{k}$ is isomorphic to $Ser_{k}$, the
supercentralizer algebra $\End_{\qn}(\V^{\tensor k})$ (Theorem
\ref{th:DD_{k}}).

Next, we move on to define the {\it walled Brauer superalgebra}
$\BB$ with a basis consisting of $(r,s)$-superdiagrams. An {\it
$(r,s)$-superdiagram} is a diagram with $(r+s)$ vertices on the top
and bottom row, the edges connecting vertices, and a vertical wall
separating the $r$-th and $(r+1)$-th vertices in each row. Each
vertex must be connected to exactly one other vertex, and each edge
may (or may not) be marked. In addition, each vertical edge cannot
cross the wall and each horizontal edge should cross the wall. There
is a natural action of each $(r,s)$-superdiagram on the mixed tensor
space $\mix$.

As a superspace, we show that $\BB$ is isomorphic to the
supercentralizer algebra $\End_{\qn}(\mix)$ whenever $n \ge r+s$. We
give an explicit description of this linear isomorphism in Section
3.  We define a multiplication on $\BB$ by marked concatenation of
diagrams  with sign such that the product is zero whenever we get a
loop in the middle row. Thus $\BB$ becomes a superalgebra. It is
easy to see that the even part of $\BB$ contains the walled Brauer
algebra $B_{r,s}(0)$ as a subalgebra. Our main result shows that the
walled Brauer superalgebra $\BB$ is isomorphic to the
supercentralizer algebra $\End_{\qn}(\mix)^{\op}$ whenever $n \ge
r+s$ (Theorem \ref{th:mixed Schur-Weyl-Sergeev duality}). We also
give a presentation of $\BB$ in terms of generators and relations
(Theorem \ref{th:presentation of B_r,s}).

This paper is organized as follows. In Section 1, we briefly recall
the basic properties of Sergeev superalgebras. In Section 2, we
construct a diagrammatic realization of Sergeev superalgebras. In
Section 3, we define the superspace $\BB$ with a basis consisting of
$(r,s)$-superdiagrams and describe the natural action of $\BB$ on
$\mix$. Moreover, we show that there is a linear isomorphism between
$\BB$ and $\End_{\qn}(\mix)$ whenever $n \ge r+s$. In Section 4, we
define a multiplication on $\BB$ and prove our main result: the
superalgebra $\BB$ is isomorphic to the supercentralizer algebra
$\End_{\qn}(\mix)^{\op}$ whenever $n \ge r+s$. Finally, in Section
5, we give a set of generators and defining relations for the walled
Brauer superalgebra $\BB$.

\vskip3mm
\medskip \noindent
{\it Acknowledgements.} The authors would like to thank Professor
Alberto Elduque at University of Zaragoza for valuable discussions
on walled Brauer algebras.

\vskip 1cm

\section{The Sergeev superalgebras}

In this paper, we will follow the notations in \cite[Section 12]{Kl}
for the super-objects: superspaces, tensor product of superspaces,
superalgebras, supermodules, superalgebra homomorphisms, supermodule
homomorphisms, etc. We will also consider non-associative
superalgebras.

The ground field  in this paper will be $\C$, the field of complex
numbers. We denote by $\Z_{\geq 0}$ the set of nonnegative integers
and set $\Z_2 = \Z / 2\Z$. For a homogeneous element $v$ in a
superspace $V=V_{\overline 0} \oplus V_{\overline 1}$, we write $|v|
\in \Z_{2}$ for its degree.

Let $\Sigma_k$ be the symmetric group of $k$ letters which is
generated by the transpositions $s_1, \ldots, s_{k-1}$.

\begin{definition}
  The \emph{Sergeev superalgebra} $Ser_k$ is the associative superalgebra
  generated by $s_1, \ldots, s_{k-1}$ and $c_1, \ldots, c_k$ with the following defining relations (for admissible $i,j$):

\begin{align}
    &s_i^2=1, \ s_is_{i+1}s_i=s_{i+1}s_is_{i+1},\ s_is_j=s_js_i \    &&\text{($|i-j|>1$)}, \label{eq:symmetric group}\\
  &c_i^2=-1, \ c_ic_j=-c_jc_i   \  &&\text{($i \neq j$)}, \label{eq:Clifford}\\
 &s_ic_is_i=c_{i+1}, \ s_ic_j=c_js_i   \  &&\text{($j \neq i, i+1$)}.
  \end{align}
  \end{definition}

The generators $s_1, \ldots, s_{k-1}$ are regarded as {\em even} and
$c_1, \ldots, c_k$ are {\em odd}. The subalgebra generated by $s_1,
\ldots, s_{k-1}$
is isomorphic to the group algebra $\C \Sigma_k$ of $\Sigma_{k}$ and
the subalgebra generated by  $c_1, \ldots, c_k$ is isomorphic to the
{\it Clifford superalgebra} $Cl_{k}$. Note that $Ser_{k}$ is
isomorphic to the superalegbra $\C \Sigma_k \ltimes Cl_k$, which is
$\C \Sigma_k \tensor Cl_k$ as a superspace with the multiplication
given by
$$(\sigma \tensor c_{i_1} \cdots c_{i_{\ell}})(\tau \tensor c_{j_1}\cdots c_{j_m})
=\sigma\tau \tensor c_{\tau^{-1}(i_1)}\cdots c_{\tau^{-1}(i_{\ell})}
c_{j_1}\cdots c_{j_m}$$ for $1 \le i_{s}, j_{t} \le k$.

Let $\V= \C^{n|n}$ be the superspace with $\V_{\overline 0}=\C^n$
and $\V_{\overline 1}=\C^n$. Choose a basis $\B_{\overline 0} =\{
v_{1}, \ldots, v_{n}\}$ (resp. $\B_{\overline 1} = \{v_{\overline
1}, \ldots, v_{\overline n} \}$) of $\V_{\overline 0}$ (resp.
$\V_{\overline 1}$). We write $|i|:= |v_{i}| \in \Z_{2}$ for $i =1,
\ldots, n, {\overline 1}, \ldots, {\overline n}$. Define an odd
operator $P: \V \rightarrow \V$ by
\begin{align} \label{map:odd involution}
v_i \longmapsto -v_{\ol i}, \ \ v_{\ol i} \longmapsto v_{i} \ \
(i=1, \ldots, n).
\end{align}
If we abuse the notation $\overline{\overline i} =i$ for $i=1,
\ldots, n$, \eqref{map:odd involution} can be written as
$$P(v_i)=(-1)^{|i|+1}v_{\ol i} \quad (i=1, \ldots, n, \ol 1,
\ldots, \ol n).$$

Recall that the superbracket on $\End_{\C}(\V)$ is given by
$$[f, g] = fg - (-1)^{|f||g|} gf$$
for homogeneous elements $f, g \in \End_{\C}(\V) $.
Define
$$\qn:= \{ f \in \End_{\C} (\V) \ | \ [f,P]=0 \}. $$
Note that it is closed under the superbracket.
We call $\qn$ the {\it queer Lie superalgebra}.
 With respect to the basis
$\B = \B_{\overline 0} \sqcup \B_{\overline 1}$, we have $$P= \left(
\begin{matrix} 0 & I \\ -I & 0\end{matrix} \right)$$ and the queer
Lie superalgebra $\qn$ can be expressed in the matrix form
\begin{align*}
\qn=\left\{ \left(
                          \begin{array}{cc}
                            A & B \\
                            B & A \\
                          \end{array}
                        \right)  \Big{|} \ A,B \text{ are arbitrary $n \times n$ complex matrices} \right\}.
\end{align*}

There is a natural action of $\qn$ on $\V$ by matrix multiplication,
which extends to an action on the $k$-fold tensor product
$\V^{\otimes k}$; i.e.,
\begin{align*}
  & g\cdot(w_{1} \tensor \cdots \tensor w_{k})\\
  &=\sum_{j=1}^{k} (-1)^{(|w_1|+\cdots+|w_{j-1}|)|g|}
  w_{1}\tensor \cdots \tensor w_{j-1}  \tensor gw_{j}  \tensor w_{j+1} \tensor \cdots\tensor w_{k},
\end{align*}
where the elements $g \in \qn$ and $w_{j} \in \V$ $(j=1, \cdots, k)$ are all homogeneous.

There is also a natural (right) action of the Sergeev superalgebra
$Ser_k$ on $\V^{\tensor k}$ \cite{Ser}. Let $w_{j}$ be the
homogeneous element in $\V$. The  $s_j$'s act on $\V^{\tensor k}$ by
graded place permutation:
\begin{align*}
  (w_{1}\tensor \cdots \tensor w_{k})\cdot s_j
  = (-1)^{|w_j||w_{j+1}|} w_{1}\tensor \cdots \tensor w_{j-1}\tensor w_{j+1}\tensor w_j
  \tensor w_{j+2} \tensor \cdots  \tensor w_{k}.
\end{align*}
The action of $c_j$'s on $\V^{\tensor k}$ is defined as follows:
\begin{align*}
  (w_{1}\tensor \cdots \tensor w_{k})\cdot c_j
  &= (-1)^{|w_1|+ \cdots +|w_{j-1}|}w_{1}\tensor \cdots \tensor w_{j-1}\tensor P(w_{j}) \tensor w_{j+1} \tensor \cdots \tensor w_{k},
\end{align*}
where $P$ is defined in \eqref{map:odd involution}. It is easy to
check that the actions of $s_i$ and $c_j$ $(i=1, \ldots, k-1, j=1,
\cdots, k)$ give rise to an action of the Sergeev superalgebra
$Ser_k$ on $\V^{\tensor k}$. Thus we obtain an algebra homomorphism
\begin{equation} \label{def:action of Ser_k}
\Phi_k: Ser_k \longrightarrow \End_{\C}(\V^{\tensor k})^{\op}.
\end{equation}
Since $Ser_{k}$ acts on $\V^{\otimes k}$ from the right, we consider
the opposite algebra $\End_{\C}(\V^{\tensor k})^{\op}$.

Let $\End_{\qn}(\V^{\tensor k})$ be the supercentralizer algebra of
the $\qn$-action on $\V^{\otimes k}$.  Then we have
$$\End_{\qn}(\V^{\tensor k})=\End_{\qn}(\V^{\tensor k})_{\ol 0}
\oplus \End_{\qn}(\V^{\tensor k})_{\ol 1}, $$ where
\begin{align*}
\End_{\qn}(\V^{\tensor k})_{j}:& = \{f \in  \End_{\C}(\V^{\tensor k})_{j} \ | \ \  f(g \cdot w)=(-1)^{|g||f|} \, g \cdot f(w)  \\
&\text{  for all homogeneous elements $g \in \qn$ and $w\in
\V^{\tensor k}$} \}.
\end{align*}
In \cite{Ser}, Sergeev proved the following fundamental theorem, the
{\it Schur-Weyl-Sergeev duality}.

\begin{theorem} \cite[Theorem 3,4]{Ser} \label{th:Sergeev duality}
\hfill

{\rm
(a)  The actions of $\qn$ and $Ser_k$ on $\V^{\tensor k}$
supercommute with each other; i.e., the image of $\Phi_{k}$ is in
$\End_{\qn}(\V^{\tensor k})$.

(b) The superalgebra homomorphism
   $\Phi_k: Ser_k \longrightarrow \End_{\qn}(\V^{\tensor k})^{{\rm op}}$ is
   surjective.

(c) If $n \geq k$, $\Phi_k$ is an isomorphism. }

\end{theorem}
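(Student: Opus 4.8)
The plan is to prove part (a) by a direct computation with the Koszul sign rule, to deduce the surjectivity in (b) from a first fundamental theorem of invariant theory for $\qn$ --- which will identify $\End_{\qn}(\V^{\tensor k})$ with the span of the operators coming from $Ser_k$ --- and to obtain the isomorphism in (c) by additionally showing that, when $n\ge k$, those operators are linearly independent.

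For (a) I would check separately that $\Phi_k(s_i)$ and $\Phi_k(c_j)$ supercommute with the $\qn$-action on $\V^{\tensor k}$. The operator $\Phi_k(s_i)$ is the graded transposition of the tensor factors in positions $i$ and $i+1$, and the verification is the usual Schur--Weyl one: the graded swap intertwines $g$ acting through the iterated coproduct on the factor order $(\dots,i,i+1,\dots)$ with $g$ acting on $(\dots,i+1,i,\dots)$, and the signs cancel. The operator $\Phi_k(c_j)$ applies the odd map $P$ to the $j$-th factor, up to the prefactor $(-1)^{|w_1|+\cdots+|w_{j-1}|}$; I would expand $g\cdot w$ as the signed sum $\sum_\ell\cdots\otimes gw_\ell\otimes\cdots$, observe that the terms with $\ell\neq j$ give operators acting on distinct tensor factors (which commute up to Koszul signs), and note that the $\ell=j$ term produces $\cdots\otimes P(gw_j)\otimes\cdots$, where the defining relation $[g,P]=0$ in $\End_\C(\V)$ --- that is, $gP=(-1)^{|g|}Pg$ --- together with a careful accounting of the sign prefactors yields exactly $\Phi_k(c_j)\circ g=(-1)^{|g|}\,g\circ\Phi_k(c_j)$. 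This shows $\on{im}\Phi_k\subseteq\End_{\qn}(\V^{\tensor k})^{\op}$, which is (a), and incidentally that the image of $U(\qn)$ in $\End_\C(\V^{\tensor k})$ supercommutes with $\on{im}\Phi_k$.

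For (b) and (c) I would identify $\End_\C(\V^{\tensor k})\cong\V^{\tensor k}\otimes(\V^*)^{\tensor k}$ as $\qn$-modules, so that $\End_{\qn}(\V^{\tensor k})\cong\bigl(\V^{\tensor k}\otimes(\V^*)^{\tensor k}\bigr)^{\qn}$, the space of $\qn$-invariants in the mixed tensor space. The first fundamental theorem of invariant theory for $\qn$ states that, for every $n$, this invariant space is spanned by the ``marked matching'' tensors $\theta_{\pi,\varepsilon}$ indexed by a bijection $\pi$ from the $k$ covariant slots to the $k$ contravariant slots together with a marking $\varepsilon\in\{0,1\}^{k}$ (inserting $\id$ or $P$ at each of the $k$ matched pairs); these are $k!\,2^k$ tensors, and under the identification above they are exactly the images under $\Phi_k$ of the natural basis of $Ser_k\cong\C\Sigma_k\ltimes Cl_k$ (the $\C\Sigma_k$-part giving the matching $\pi$, the Clifford monomial giving the marking $\varepsilon$). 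Hence $\End_{\qn}(\V^{\tensor k})=\on{im}\Phi_k$, which in particular gives the surjectivity of $\Phi_k$ for every $n$; this is (b). To prove this first fundamental theorem I would restrict to the even subalgebra $\qn_{\bar 0}=\gl_n$: there $\V\cong\C^n\otimes\C^{2}$ and classical ${\rm GL}_n$ Schur--Weyl duality presents $\End_{\gl_n}(\V^{\tensor k})$ as a quotient of $\C\Sigma_k\otimes\End_\C\bigl((\C^{2})^{\tensor k}\bigr)$, after which one intersects with the condition of supercommuting with the odd part $\qn_{\bar 1}$, whose generators act at each site by an $E_{ij}$ on the $\C^n$-factor together with a fixed odd operator on the $\C^{2}$-factor (weighted by Koszul signs); tracking how this constraint cuts the $\End_\C\bigl((\C^{2})^{\tensor k}\bigr)$-component down to a $2^k$-dimensional piece is exactly where the Clifford algebra $Cl_k$ appears. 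For (c) with $n\ge k$ it then remains to prove that $\Phi_k$ is injective, i.e.\ that the $k!\,2^k$ operators $\Phi_k(\sigma\otimes c_{i_1}\cdots c_{i_\ell})$ are linearly independent in $\End_\C(\V^{\tensor k})$; here I would evaluate a hypothetical linear relation on basis tensors $v_{a_1}\otimes\cdots\otimes v_{a_k}$ with pairwise distinct indices $a_t$ (available since $n\ge k$) to isolate the graded-permutation part, and then use that the operators ``$P$ on the $t$-th factor'' act on distinct tensor factors to isolate the Clifford part. Being both surjective and injective, $\Phi_k$ is then an isomorphism, which is (c).

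The hard part is the first fundamental theorem, equivalently the upper bound $\dim_\C\End_{\qn}(\V^{\tensor k})\le k!\,2^k$: all of (b) and the surjectivity half of (c) rest on it, and an honest proof requires either the reduction to classical ${\rm GL}_n$ invariant theory sketched above --- with careful bookkeeping of the Koszul signs and of the passage from the case $n\ge k$ to arbitrary $n$ --- or an independent input such as a Howe-type duality or a direct spanning argument for the $\qn$-invariants in mixed tensor space. By contrast, part (a) and the injectivity statement for (c) are essentially formal once the signs are organized correctly.
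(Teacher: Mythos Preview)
The paper does not give its own proof of this theorem: it is stated with the citation \cite[Theorem~3,4]{Ser} and no proof environment follows. The authors take Sergeev's result as a known input and use it (via Lemma~\ref{lem:duality for D_k and qn} and Proposition~\ref{prop:duality between D_k and qn}) to transfer the duality to their diagrammatic model $\DD_k$. So there is nothing in the paper to compare your argument against at the level of this statement.

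As for your sketch itself: part~(a) is fine and is essentially the same direct computation the paper carries out later (for the generators $c_j$) in the proof of Proposition~\ref{lem:supercommute between B_r,s and q(n)}. Your injectivity argument for~(c) via evaluation on $v_{a_1}\otimes\cdots\otimes v_{a_k}$ with pairwise distinct $a_t$ is the standard one and works. The genuinely nontrivial step is, as you say, the first fundamental theorem for $\qn$ underlying~(b); your reduction via $\V\cong\C^n\otimes\C^2$ and classical $\mathrm{GL}_n$ invariant theory is a reasonable outline, but note that this is precisely the content of Sergeev's original paper, so you are effectively reproving the cited reference rather than filling a gap in the present paper.
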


\vskip 5mm

\section{Diagrammatic realization of Sergeev superalgebras}

In this section, we construct a new diagrammatic realization of
Sergeev superalgebras. A \emph{k-superdiagram} is defined to be a
diagram consisting of $k$ vertices on its top and bottom row
together with $k$ edges such that each vertex on the top row must be
connected to exactly one vertex on the bottom row and each edge may
(or may not) be \emph{marked}. We denote the normal edge by $
{\beginpicture \setcoordinatesystem units <0.6cm,0.3cm> \setplotarea
x from 0.5 to 1.5, y from -1.5 to 2.5 \put{$\bullet$} at  1 2
\put{$\bullet$} at 1 -1 \plot 1 2  1 -1 /
\endpicture}
$
and
the marked edge by $
{\beginpicture
\setcoordinatesystem units <0.6cm,0.3cm>
\setplotarea x from 0.5 to 1.5, y from -1.5 to 2.5
\put{$\bullet$} at  1 2
\put{$\bullet$} at  1 -1
\plot 1 2  1 -1 /
\arrow <3 pt> [1,2] from 1 -1 to 1 0.75
\endpicture}
$.
We number the vertices in each row of a $k$-superdiagram
from left to right with $1,2, \ldots, k$.
In Figure 1, $d$ is an example of a $5$-superdiagram.
\begin{figure}[!h]
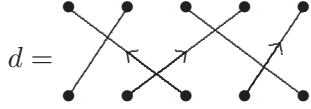

$d={\beginpicture
\setcoordinatesystem units <0.78cm,0.39cm>
\put{$\bullet$} at  1 -1  \put{$\bullet$} at  1 2
\put{$\bullet$} at  2 -1  \put{$\bullet$} at  2 2
\put{$\bullet$} at  3 -1  \put{$\bullet$} at  3 2
\put{$\bullet$} at  4 -1  \put{$\bullet$} at  4 2
\put{$\bullet$} at  5 -1  \put{$\bullet$} at  5 2
\plot 1 2  3 -1 /
\plot 2 2  1 -1 /
\plot 3 2  5 -1 /
\plot 4 2  2 -1 /
\plot 5 2  4 -1 /
\arrow <3 pt> [1,2] from 3 -1 to  2 0.5
\arrow <3 pt> [1,2] from 2 -1 to  3 0.5
\arrow <3 pt> [1,2] from 4 -1 to  4.6 0.8
\endpicture}$
\caption{$5$-superdiagram $d$} \label{Fig:example1}
\end{figure}

As usual, we can identify $\tau \in \Sigma_k$ with \emph{its
permutation diagram}, the diagram with $k$ vertices on the top and
bottom row and $k$ normal vertical edges connecting the $i$-th
vertex on the top row to the $\tau(i)$-th vertex (from left) on the
bottom row for each $1 \leq i \leq k$.

The $k$-superdiagram which has even (respectively, odd) number of
marked edges is regarded as \emph{even} (respectively, \emph{odd}).
Let $\overrightarrow{D}_k$ be the superspace with a basis consisting
of $k$-superdiagrams. The $k$-superdiagrams without marked edges
form a basis of $\C \Sigma_k$.

Now we define a multiplication on $\DD_k$ in two steps.


{\bf Step 1:} Marked concatenation

For $d_1, d_2 \in \DD_k$, we define the {\it marked concatenation}
$d_1 * d_2$ as follows. We first put $d_1$ under $d_2$ and identify
the vertices on the bottom row of $d_2$ with the vertices on the top
row of $d_1$. Next, we declare an edge in this diagram is marked if
and only if the number of marked edges from $d_1$ and $d_2$ to form
this edge is odd. The diagram thus obtained is the marked
concatenation $d_1 * d_2$.

For example, if
\begin{center}
$\ {\beginpicture
\setcoordinatesystem units <0.78cm,0.39cm>
\setplotarea x from 0 to 6, y from -2 to 3
\put{$d_1 = $} at 0 0.3
\put{$\bullet$} at  1 -1  \put{$\bullet$} at  1 2
\put{$\bullet$} at  2 -1  \put{$\bullet$} at  2 2
\put{$\bullet$} at  3 -1  \put{$\bullet$} at  3 2
\put{$\bullet$} at  4 -1  \put{$\bullet$} at  4 2
\put{$\bullet$} at  5 -1  \put{$\bullet$} at  5 2
\plot 1 2  2 -1 /
\plot 2 2  1 -1 /
\plot 3 2  4 -1 /
\plot 4 2  3 -1 /
\plot 5 2  5 -1 /
\arrow <3 pt> [1,2] from 2 -1 to  1.2 1.4
\arrow <3 pt> [1,2] from 4 -1 to  3.2 1.4
\arrow <3 pt> [1,2] from 1 -1 to  1.8 1.4
\arrow <3 pt> [1,2] from 3 -1 to  3.8 1.4
\put{,} at 5.5 0.3
\endpicture}$
${\beginpicture
\setcoordinatesystem units <0.78cm,0.39cm>
\setplotarea x from 0 to 6, y from -2 to 3
\put{$d_2 = $} at 0 0.3
\put{$\bullet$} at  1 -1  \put{$\bullet$} at  1 2
\put{$\bullet$} at  2 -1  \put{$\bullet$} at  2 2
\put{$\bullet$} at  3 -1  \put{$\bullet$} at  3 2
\put{$\bullet$} at  4 -1  \put{$\bullet$} at  4 2
\put{$\bullet$} at  5 -1  \put{$\bullet$} at  5 2
\plot 1 2  2 -1 /
\plot 2 2  3 -1 /
\plot 3 2  1 -1 /
\plot 4 2  5 -1 /
\plot 5 2  4 -1 /
\arrow <3 pt> [1,2] from 1 -1 to  2.6 1.4
\arrow <3 pt> [1,2] from 5 -1 to  4.2 1.4
\put{,} at 5.5 0.3
\endpicture}$
\end{center}
then
$${\beginpicture
\setcoordinatesystem units <0.78cm,0.39cm>
\setplotarea x from 0 to 6, y from -2 to 3
\put{$d_1 *d_2 = $} at -1 -1.5
\put{$\bullet$} at  1 -1  \put{$\bullet$} at  1 2
\put{$\bullet$} at  2 -1  \put{$\bullet$} at  2 2
\put{$\bullet$} at  3 -1  \put{$\bullet$} at  3 2
\put{$\bullet$} at  4 -1  \put{$\bullet$} at  4 2
\put{$\bullet$} at  5 -1  \put{$\bullet$} at  5 2
\plot 1 2  2 -1 /
\plot 2 2  3 -1 /
\plot 3 2  1 -1 /
\plot 4 2  5 -1 /
\plot 5 2  4 -1 /
\arrow <3 pt> [1,2] from 1 -1 to  2.6 1.4
\arrow <3 pt> [1,2] from 5 -1 to  4.2 1.4
\put{$\bullet$} at  1 -5  \put{$\bullet$} at  1 -2
\put{$\bullet$} at  2 -5  \put{$\bullet$} at  2 -2
\put{$\bullet$} at  3 -5  \put{$\bullet$} at  3 -2
\put{$\bullet$} at  4 -5  \put{$\bullet$} at  4 -2
\put{$\bullet$} at  5 -5  \put{$\bullet$} at  5 -2
\plot 1 -2  2 -5 /
\plot 2 -2  1 -5 /
\plot 3 -2  4 -5 /
\plot 4 -2  3 -5 /
\plot 5 -2  5 -5 /
\arrow <3 pt> [1,2] from 2 -5 to  1.2 -2.6
\arrow <3 pt> [1,2] from 4 -5 to  3.2 -2.6
\arrow <3 pt> [1,2] from 1 -5 to  1.8 -2.6
\arrow <3 pt> [1,2] from 3 -5 to  3.8 -2.6
\setdashes  <.4mm,.3mm>
\plot 1 -2   1 -1 /
\plot 2 -2   2 -1 /
\plot 3 -2   3 -1 /
\plot 4 -2   4 -1 /
\plot 5 -2   5 -1 /
\setsolid
\put{=} at 6.5 -1.5 \
\put{$\bullet$} at  8 -3  \put{$\bullet$} at  8 0
\put{$\bullet$} at  9 -3  \put{$\bullet$} at  9 0
\put{$\bullet$} at  10 -3  \put{$\bullet$} at  10 0
\put{$\bullet$} at  11 -3  \put{$\bullet$} at  11 0
\put{$\bullet$} at  12 -3  \put{$\bullet$} at  12 0
\plot 8 -3  8 0 /
\plot 9 -3  10 0 /
\plot 10 -3  12 0 /
\plot 11 -3  9 0 /
\plot 12 -3  11 0 /
\arrow <3 pt> [1,2] from 8 -3 to  8 -1.3
\arrow <3 pt> [1,2] from 11 -3 to  10 -1.5
\arrow <3 pt> [1,2] from 10 -3 to 11 -1.5
\arrow <3 pt> [1,2] from 12 -3 to 11.5 -1.5
\put{.} at 12.7 -2
\endpicture}$$

{\bf Step 2:} The multiplication on $\DD_{k}$


We first define the numbers  $\rho(d_1,d_2), \ell(d_1,d_2)  \in
\Z_{\geq 0}$ as follows.

\begin{enumerate}
\item Let $\rho(d_1,d_2)$ be the number of edges in $d_1 *d_2$ obtained by
connecting a marked edge in $d_1$ and a marked edge in $d_2$.

\item Consider the $i$-th marked edge $e_i$ in $d_1$ reading the top vertices from left to
right. Let $a_i$ be the top vertex of the edge in $d_2$ that is
connected to  $e_i$. Then we obtain a sequence $a_1 \cdots a_p$. Let
$b_1 \cdots b_q$ be the sequence obtained by reading the top
vertices of the marked edges in $d_2$.

Consider the sequence $a_1 \cdots a_p b_1 \cdots b_q$. Suppose that
the smallest entry in this sequence appears $i$ times, and the next
smallest one appears $j$ times, etc. We replace the $i$ occurrences
of the smallest entry by $1, 2, \cdots, i$ {\it from left to right},
the $j$ occurrences of the next smallest entry by  $i+1, \ldots,
i+j$ {\it from left to right}, etc. We denote by $a'_1 \cdots a'_p
b'_1 \cdots b'_q$ the new sequence obtained in this manner. Then we
obtain a permutation $\left(
                                         \begin{array}{ccccccc}
                                           1 & 2  &         \cdots &p& p+1 &\cdots &p+q\\
                                           a'_1 & a'_2 &\cdots & a'_p& b'_1 &\cdots&b'_{q}   \\
                                         \end{array}
                                       \right)$
in $\Sigma_{p+q}$. Let $\ell(d_1,d_2)$ be the length of the above
permutation.
\end{enumerate}

We now define the multiplication on $\DD_{k}$ by
\begin{equation} \label{eq:multiplication in D_k}
d_{1} d_{2} = (-1)^{\rho(d_1, d_2) + \ell(d_{1}, d_{2})} d_1 * d_2.
\end{equation}
The number $\ell(d_1, d_2)$ is called the
\emph{arranging number} for a sequence $a_1 \cdots a_p b_1 \cdots
b_q$.

\begin{example}
In the above example, we have $\rho(d_1,d_2)=1$. In (2), we obtain
$a_1a_2a_3a_4=3125$, $b_1b_2=34$ and
$a'_1a'_2a'_3a'_4b'_1b'_2=312645$. Hence
$\ell(d_1,d_2)=\ell(s_2s_1s_5s_4)=4$. It follows that
$$d_1d_2=(-1)^{1+4} d_1*d_2. $$ 
\end{example}

If $d_1$ has $i$ marked edges and $d_2$ has $j$ marked edges, then
the number of the marked edges in $d_1d_2$ is $i+j-2 \rho(d_1,d_2)$.
It follows that
$\DD_k$ is a superalgebra (which may not be associative at this
point). The identity element in $\DD_k$ is the diagram such that
each vertex on the top row is connected with the corresponding
vertex on the bottom row by the normal edge.

Let $d$ be a $k$-superdiagram. If we forget the marks on $d$, we get
a permutation, say, $\sigma$. By reading the vertices at the top of
marked edges  from left to right, we obtain a sequence $i_1 \cdots
i_m$ such that $1 \leq i_1 < \cdots < i_m \leq k$. For example, in
Figure \ref{Fig:example1}, we obtain
$$\sigma=s_2s_4s_1s_3=\left(
  \begin{array}{ccccc}
   1 & 2 & 3 & 4 & 5 \\
   3 & 1 & 5 & 2 & 4
  \end{array}
\right) \in \Sigma_5, \qquad i_1i_2i_3=145.$$

Recall that the Sergeev superalgebra $Ser_k$ has a basis
\begin{align} \label{def:basis element form of Ser_k}
\{ \sigma c_{i_1} \cdots c_{i_m} \ | \ \sigma \in \Sigma_k, 1 \leq i_1 < \cdots <i_m \leq k \}.
\end{align}
Therefore we get a linear map $\phi_{k}: \DD_{k} \rightarrow
Ser_{k}$ given by
$$ d \longmapsto \sigma c_{i_{1}} \cdots c_{i_m},$$
where $\sigma$ and $i_1, \ldots, i_m$ are defined above. Since $\dim
\DD_{k} = 2^k k! = \dim Ser_{k}$, $\phi$ is a linear isomorphism.


We would like to stress that, for instance, the $5$-superdiagram
${\beginpicture \setcoordinatesystem units <0.6cm,0.3cm>
\setplotarea x from 0.5 to 5.5, y from -2 to 3
\put{$\bullet$} at  1 -1  \put{$\bullet$} at  1 2
\put{$\bullet$} at  2 -1  \put{$\bullet$} at  2 2
\put{$\bullet$} at  3 -1  \put{$\bullet$} at  3 2
\put{$\bullet$} at  4 -1  \put{$\bullet$} at  4 2
\put{$\bullet$} at  5 -1  \put{$\bullet$} at  5 2
\plot 1 2  1 -1 /
\plot 2 2  2 -1 /
\plot 3 2  3 -1 /
\plot 4 2  4 -1 /
\plot 5 2  5 -1 /
\arrow <3 pt> [1,2] from 1 -1 to  1 0.7
\arrow <3 pt> [1,2] from 2 -1 to  2 0.7
\endpicture}$
corresponds to an element $c_1c_2 \in Ser_5$, not $c_2 c_1$.

We now prove the main result of this section which gives a
diagrammatic realization of the Sergeev superalgebra $Ser_{k}$.

\begin{theorem}\label{th:DD_{k}}
{\rm
The linear map $\phi_k$ preserves the multiplication:
 $$\phi_k(d_1d_2)=\phi_k(d_1)\phi_k(d_2) \quad \text{for all} \ d_1,d_2 \in
 \DD_k.$$
 Hence $\DD_{k}$ is isomorphic to $Ser_{k}$ as an associative
 superalgebra.
 }
\end{theorem}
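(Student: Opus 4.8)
The plan is to verify directly that $\phi_k$ intertwines the two multiplications on a basis, i.e. on $k$-superdiagrams. Fix $d_1, d_2 \in \DD_k$ and write $\phi_k(d_1) = \sigma\, c_{i_1}\cdots c_{i_m}$ and $\phi_k(d_2) = \tau\, c_{j_1}\cdots c_{j_\ell}$, where $\sigma,\tau$ are the underlying permutations and $i_1 < \cdots < i_m$, $j_1 < \cdots < j_\ell$ are the top-vertices of the marked edges. On the $Ser_k$-side I would use the normal form coming from the semidirect product decomposition $\C\Sigma_k \ltimes Cl_k$ recalled in the excerpt: the defining relation $s_i c_i s_i = c_{i+1}$ (together with $s_i c_j = c_j s_i$ for $j \ne i, i+1$) gives $\sigma c_a = c_{\sigma(a)} \sigma$ for any $\sigma \in \Sigma_k$, so
\[
\phi_k(d_1)\phi_k(d_2) = \sigma c_{i_1}\cdots c_{i_m}\,\tau\, c_{j_1}\cdots c_{j_\ell} = (\sigma\tau)\, c_{\tau^{-1}(i_1)}\cdots c_{\tau^{-1}(i_m)}\, c_{j_1}\cdots c_{j_\ell}.
\]
Then I must push this word in the Clifford generators into the standard increasing form. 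This is exactly where the two combinatorial signs $\rho$ and $\ell$ enter: every time two Clifford generators with the \emph{same} index are brought together, the relation $c_a^2 = -1$ contributes a factor $-1$ and removes both generators; every transposition of two generators with \emph{distinct} indices contributes $-1$ via $c_a c_b = -c_b c_a$.

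**The matching of signs** is the heart of the argument. First I would observe that the underlying permutation of $d_1 * d_2$ is $\sigma\tau$ (concatenation of diagrams composes permutations), so the permutation part of $\phi_k(d_1 d_2)$ already agrees with that of $\phi_k(d_1)\phi_k(d_2)$; only the Clifford part and the overall sign remain. Next, I claim the indices that survive after normalizing $c_{\tau^{-1}(i_1)}\cdots c_{\tau^{-1}(i_m)}\,c_{j_1}\cdots c_{j_\ell}$ are precisely the top-vertices of the marked edges of $d_1 * d_2$: an edge of $d_1 * d_2$ is marked iff it is formed from an odd number of marked edges among $d_1, d_2$, so a pair (marked edge of $d_1$ stacked on a marked edge of $d_2$) cancels, and this is exactly the situation where $\tau^{-1}(i_r)$ equals some $j_t$ — giving a repeated Clifford index. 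The number of such coincidences is, by definition, $\rho(d_1,d_2)$, and each coincidence costs a $-1$ via $c_a^2 = -1$; this accounts for the $(-1)^{\rho(d_1,d_2)}$ factor. After deleting these cancelling pairs, I am left with a product of distinct Clifford generators in the order given by the sequence $a_1 \cdots a_p b_1 \cdots b_q$ of the excerpt (the $a$'s being the $\tau^{-1}(i_r)$ that survive, listed in the order the $i_r$ appear, i.e. left-to-right along the top of $d_1$; the $b$'s being the unmatched $j_t$), and sorting this sequence into increasing order costs one $-1$ per inversion, i.e. $(-1)^{\ell(d_1,d_2)}$ where $\ell$ is the length of the sorting permutation as defined. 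Hence $\phi_k(d_1)\phi_k(d_2) = (-1)^{\rho(d_1,d_2)+\ell(d_1,d_2)}(\sigma\tau)\,c_{k_1}\cdots c_{k_N}$ with $k_1 < \cdots < k_N$ the marked top-vertices of $d_1 * d_2$, which is exactly $\phi_k((-1)^{\rho+\ell}d_1 * d_2) = \phi_k(d_1 d_2)$.

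**The main obstacle** I anticipate is a careful bookkeeping issue hidden in the second bullet: one must be sure that the sequence of surviving indices really is read off in the order prescribed (the $a_i$ ordered by the left-to-right position of the $i$-th marked edge of $d_1$ along its \emph{top} row, not its bottom), and that interleaving the cancellations with the sorting does not change the total sign. The cleanest way around this is to first move, via $c_a c_b = -c_b c_a$ only, the word $c_{\tau^{-1}(i_1)}\cdots c_{\tau^{-1}(i_m)} c_{j_1}\cdots c_{j_\ell}$ to the fully sorted arrangement of the multiset $\{\tau^{-1}(i_1),\dots,\tau^{-1}(i_m)\} \cup \{j_1,\dots,j_\ell\}$ — here equal indices are placed adjacent, left occurrence before right, which is exactly the "arranging number" convention — incurring sign $(-1)^{\ell(d_1,d_2)}$; and only then collapse each adjacent equal pair $c_a c_a = -1$, incurring $(-1)^{\rho(d_1,d_2)}$. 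Since $\Z_2$-signs commute, the order of these two operations is irrelevant, and the bookkeeping reduces to the elementary fact that the sign of a permutation equals $(-1)^{(\text{number of inversions})}$. Once multiplicativity on the basis is established, bilinearity extends it to all of $\DD_k$, and since $\phi_k$ is already known to be a linear isomorphism with $Ser_k$ associative, it follows that $\DD_k$ is an associative superalgebra isomorphic to $Ser_k$.
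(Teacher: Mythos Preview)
Your proposal is correct and follows essentially the same approach as the paper's proof: both compute $\phi_k(d_1)\phi_k(d_2) = (\sigma\tau)\, c_{\tau^{-1}(i_1)}\cdots c_{\tau^{-1}(i_m)} c_{j_1}\cdots c_{j_\ell}$ via the semidirect-product relation, identify the sequence $\tau^{-1}(i_1)\cdots\tau^{-1}(i_m)\,j_1\cdots j_\ell$ with the paper's $a_1\cdots a_p b_1\cdots b_q$, and then reduce the Clifford word to normal form, matching the sign from repeated indices with $\rho(d_1,d_2)$ and the sorting sign with $\ell(d_1,d_2)$. Your discussion of the bookkeeping (sort first using only $c_ac_b=-c_bc_a$ for $a\neq b$, keeping equal indices in their original left-to-right order as the arranging-number convention dictates, then collapse adjacent equal pairs) is somewhat more explicit than the paper's, but the argument is the same.
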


\begin{proof}
 Since $\phi_k((\DD_k)_{i}) \subset (Ser_k)_{i}$ for all $i \in \Z_2$, $\phi_k$ is an even linear map.
 For $d_1,d_2 \in \DD_k$, let $\phi_k(d_1)=\sigma c_{i_1} \cdots c_{i_{p}}, \phi_k(d_2)=\tau c_{j_1} \cdots c_{j_{q}}$,
  ($\sigma,\tau \in \Sigma_k, 1 \leq i_1 < \cdots <i_p \leq k, 1 \leq j_1 < \cdots <j_q \leq k$).
  Then
 \begin{align*}
   \phi_k(d_1)\phi_k(d_2)=\sigma c_{i_1} \cdots c_{i_{p}} \tau c_{j_1} \cdots c_{j_{q}}=
   \sigma\tau c_{\tau^{-1}(i_1)} \cdots c_{\tau^{-1}(i_p)}c_{j_1} \cdots c_{j_q}.
    \end{align*}
We rearrange $c_{\tau^{-1}(i_1)} \cdots c_{\tau^{-1}(i_p)}c_{j_1}
\cdots c_{j_q}$ to obtain an element of the form \eqref{def:basis
element form of Ser_k}.
Since $${c_i}^2=-1, c_ic_j=-c_jc_i \quad \text{for} \ i \neq j,$$ we
have  $$c_{\tau^{-1}(i_1)} \cdots c_{\tau^{-1}(i_p)}c_{j_1} \cdots
c_{j_q}= (-1)^{x+y}c_{h_1} \cdots c_{h_t},$$ where $x$ is the number
of pairs with the same entries in $\tau^{-1}(i_1) \cdots
\tau^{-1}(i_p)j_1 \cdots j_q$ and $y$ is the smallest number of
transpositions that needed to rearrange $\tau^{-1}(i_1) \cdots
\tau^{-1}(i_p)j_1 \cdots j_q$ in order from the smallest one to the
biggest one (from left to right). We observe
$$\tau^{-1}(i_1) \cdots \tau^{-1}(i_p)j_1 \cdots j_q=a_1 \cdots a_p b_1 \cdots b_q$$
and $x=\rho(d_1,d_2)$, $y=\ell(d_1,d_2)$. Also,
$\phi_k(d_1 *d_2)=\sigma\tau c_{h_1} \cdots c_{h_t}$. Therefore,
\begin{align*}
  \phi_k(d_1)\phi_k(d_2)&=(-1)^{\rho(d_1,d_2)+\ell(d_1,d_2)}\sigma\tau  c_{h_1} \cdots c_{h_t}\\
  &=\phi_k(d_1d_2).
\end{align*}
\end{proof}


\begin{remark}
There have been a lot of works done on diagram algebras  with marked
edges or marked vertices
 (see, for example, \cite{B,E, PKem, PKen}).
 This work is different from those in that we deal with
 superalgebras.
\end{remark}

Since $\DD_{k}$ is isomorphic to $Ser_{k}$, it is isomorphic to
$\End_{\qn}(\V^{\tensor k})^{\rm op}$. Still, we will give a direct
proof of this fact  because it will serve as a guideline for the
proof of our main theorem in Section 4.

We define a (right) action of each $k$-superdiagram on $\V^{\tensor
k}$ as follows. Let $I:=\{ 1, \ldots, n, \ol 1, \ldots, \ol n \}$
and ${\bf i}:=(i_1, \ldots, i_k) \in I^k$. Define $|i|:=0$ if $i \in
\{1, \ldots, n \}$ and $|i|:=1$ if $i \in \{\ol 1, \ldots, \ol n
\}$. For $d \in \DD_{k}$ and $\ii, \jj \in I^{k}$, we label the
vertices at the bottom row of $d$ with
 $i_1, \ldots , i_{k}$ (from left to right) and the vertices at the top row of $d$ by $j_1, \ldots, j_{k}$
 (from left to right).
  We denote this labeled diagram by $_{\ii} d_{\jj}$ and
define the \emph{weight} of the labeled diagram $_{\ii} d_{\jj}$ to
be
 \begin{align*}
   \wtd := \prod_{e} \delta_e \prod_c(-1)^{|c|}\prod_{\widetilde{e}_i}(-1)^{m(\widetilde{e}_i)},
 \end{align*}
 where the notations are explained below:
 \begin{enumerate}
   \item  The first product is taken over all edges $e$ in $\idj$.
For each normal edge $e$ with labels  $i$ and $j$ at the end points,
let $\delta_e=\delta_{i,j}$.
   We define $\delta_e=\delta_{{\ol i},j}$ for each marked edge $e$ with label $i$ at the bottom vertex
   and $j$ at the top vertex.

   \item The second product is taken over all crossings $c$ in $\idj$.
    We define $|c|=|e_1||e_2|$ for each crossing of two different edges $e_1$ and $e_2$, where
   we define $|e_i|=|i_m|$ if the edge $e_i$ is labeled $i_m$
   at the \emph{bottom} vertex.

   \item The last product is taken over all marked edges in $\idj$.
 Read the top vertices of the marked edges from left to right to
 obtain a sequence $b_1 \cdots b_q$ for $1 \leq b_1< \cdots <b_q \leq k$
   and let each marked edge be $\widetilde{e}_i$.
   We define ${m(\widetilde{e}_i)}=|j_1| + \cdots + |j_{b_i}|$.
 \end{enumerate}

If $\wt(\idj)\neq 0$, we say that $\idj$ is \emph{consistently
labeled}. Since there are only vertical edges in a $k$-superdiagram
$d$, the labeling $\ii \in I^k$ determines a unique labeling $\jj
\in I^k$ such that $\wt(\idj) \neq 0$.

\begin{example}
 For $\ii=(1,2, \ol 1, \ol 1, \ol 2)$ and $\jj=(1,1,\ol 2, \ol 2, 1)$,
the diagram $\idj$ of $d$ in Figure 1 is consistently labeled and
has $\wt(\idj)=-1$ for $n=2$:
$$\idj={\beginpicture
\setcoordinatesystem units <0.78cm,0.39cm>
\put{$\bullet$} at  1 -1  \put{$\bullet$} at  1 2
\put{$\bullet$} at  2 -1  \put{$\bullet$} at  2 2
\put{$\bullet$} at  3 -1  \put{$\bullet$} at  3 2
\put{$\bullet$} at  4 -1  \put{$\bullet$} at  4 2
\put{$\bullet$} at  5 -1  \put{$\bullet$} at  5 2
\put{$1$} at  1 -2
\put{$2$} at  2 -2
\put{$\ol 1$} at  3 -2
\put{$\ol 1$} at  4 -2
\put{$\ol 2$} at  5 -2

\put{$1$} at  1 3
\put{$1$} at  2 3
\put{$\ol 2$} at  3 3
\put{$\ol 2$} at  4 3
\put{$1$} at  5 3

\plot 1 2  3 -1 /
\plot 2 2  1 -1 /
\plot 3 2  5 -1 /
\plot 4 2  2 -1 /
\plot 5 2  4 -1 /
\arrow <3 pt> [1,2] from 3 -1 to  2 0.5
\arrow <3 pt> [1,2] from 2 -1 to  3 0.5
\arrow <3 pt> [1,2] from 4 -1 to  4.6 0.8
\endpicture} \ \ .$$
To calculate the weight, we use
\begin{align*}
\prod_{c}(-1)^{|c|}&=(-1)^{|1||\ol 1|+|2||\ol 1|+|2||\ol 2|+|\ol 1||\ol 2|}=-1,\\
(-1)^{m(\te_1)+m(\te_2)+m(\te_3)}&=(-1)^{|1| + (|1|+|1|+|\ol 2|+|\ol 2|)+(|1|+|1|+|\ol 2|+|\ol 2|+|1|)}=1.
\end{align*}
\end{example}

Let $v_{\ii}=v_{i_1} \tensor \cdots \tensor v_{i_k}$ for $\ii=(i_1,
\ldots, i_k) \in I^k$. Obviously, $\{v_{\ii} \ | \ \ii \in I^k \}$
forms a basis of $\V^{\tensor k}$. Define a linear map
$\widetilde{\Phi}_{k}$ from $\DD_k$ into $\End_{\C}(\Vk)$ by
 \begin{eqnarray} \label{def:action of D_k}
  \widetilde{\Phi}_{k} \ : \ \DD_k &\longrightarrow &  \End_{\C}(\Vk)\\
  \nonumber       d &\longmapsto &  \left( v_{\ii}  \mapsto
        \sum_{\jj \in I^{k}} \wt(\idj) \ v_{\jj}  \right).
\end{eqnarray}

\nc{\tPhi}{\widetilde{\Phi}}
\vskip 3mm

\begin{lemma} \label{lem:duality for D_k and qn}
 {\rm $\widetilde{\Phi}_k(d)=\Phi_k(\phi_k(d))$ for all $d \in
 \DD_k$.}
\end{lemma}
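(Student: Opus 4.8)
The plan is to verify the identity $\widetilde\Phi_k(d) = \Phi_k(\phi_k(d))$ on the basis $\{v_{\ii}\}$ of $\Vk$, reducing everything to the case where $d$ is a generator-type superdiagram and then bootstrapping via Theorem~\ref{th:DD_{k}}. First I would check that both sides are compatible with the multiplicative structures that have already been set up: the right-hand side $\Phi_k \circ \phi_k$ is an algebra homomorphism $\DD_k \to \End_\C(\Vk)^{\op}$ since $\phi_k$ is an isomorphism of superalgebras (Theorem~\ref{th:DD_{k}}) and $\Phi_k$ is an algebra homomorphism by~\eqref{def:action of Ser_k}. So if I can show that $\widetilde\Phi_k$ is \emph{also} an algebra homomorphism into $\End_\C(\Vk)^{\op}$, and that the two maps agree on a generating set of $\DD_k$, the lemma follows. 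A convenient generating set consists of the permutation diagrams $s_i$ (normal vertical edges, one crossing) and the single-marked identity diagrams $c_j$ (all normal vertical edges except the $j$-th, which is marked).

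The core computation is thus purely local. For the diagram corresponding to $s_i$, the only nonzero contributions to $\wt({}_{\ii}(s_i)_{\jj})$ come from $\jj = s_i\cdot\ii$ (as a place-permutation of indices), and the sign is exactly $(-1)^{|i_i||i_{i+1}|}$ coming from the unique crossing; this matches the graded place-permutation action of $s_i$ from Section~1. For the diagram corresponding to $c_j$, the nonzero weight forces $\jj$ to equal $\ii$ with the $j$-th entry barred; the $\prod_c$ factor is trivial (no crossings), the $\prod_c (-1)^{|c|}$ over "crossings $c$" is empty, and the single marked-edge factor contributes $(-1)^{|j_1|+\cdots+|j_{b_1}|}$ with $b_1 = j$ and $j_1,\dots,j_{j-1}$ being the unbarred $i_1,\dots,i_{j-1}$ while $j_j$ is barred — one checks this reproduces exactly the sign $(-1)^{|w_1|+\cdots+|w_{j-1}|}$ together with the $P$-action $v_{i_j}\mapsto P(v_{i_j})$ appearing in the formula for $(w_1\otimes\cdots\otimes w_k)\cdot c_j$. (The factor $(-1)^{|i|+1}$ hidden in $P$ versus the $\delta_e = \delta_{\bar i, j}$ convention for marked edges needs to be reconciled carefully; this is the one genuinely fiddly sign-bookkeeping point.) After these two base cases, I would note $\widetilde\Phi_k(\id) = \id$ on the identity diagram.

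To promote this to all of $\DD_k$, I would prove the multiplicativity $\widetilde\Phi_k(d_1 d_2) = \widetilde\Phi_k(d_2)\circ\widetilde\Phi_k(d_1)$ directly: expanding $\widetilde\Phi_k(d_2)\circ\widetilde\Phi_k(d_1)$ on $v_{\ii}$ produces a sum over intermediate labelings $\kk$ of $\wt({}_{\ii}(d_1)_{\kk})\,\wt({}_{\kk}(d_2)_{\jj})\,v_{\jj}$, and one matches this against $\wt({}_{\ii}(d_1 d_2)_{\jj})$ using the definition of marked concatenation. The $\delta$-factors force $\kk$ to be the unique consistent middle labeling, so the sum collapses to a single term; then one must check the three sign factors (products over $\delta_e$, over crossings, and over marked edges) multiply correctly. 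The crossing-sign contributions combine exactly the way the Reidemeister-type count behind $\ell(d_1,d_2)$ works, and the marked-edge signs combine via the $m(\widetilde e_i)$ bookkeeping to reproduce $(-1)^{\rho(d_1,d_2)+\ell(d_1,d_2)}$ — this is the heart of the matter and mirrors the sign analysis already done in the proof of Theorem~\ref{th:DD_{k}}, just transported through $\widetilde\Phi_k$ instead of $\phi_k$.

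I expect the main obstacle to be precisely this sign reconciliation in the multiplicativity step: keeping straight the interplay between (i) the crossing signs $|e_1||e_2|$ that accumulate when one stacks $d_1$ under $d_2$ and edges that were non-crossing become crossing (the source of $\ell(d_1,d_2)$), (ii) the marked-edge weight factors $(-1)^{m(\widetilde e_i)}$ whose defining sequence $b_1\cdots b_q$ changes under concatenation, and (iii) the $\rho(d_1,d_2)$ term from two marked edges fusing into a normal edge (which flips a $\delta_{\bar i,k}\delta_{\bar k,j}$ into a $\delta_{i,j}$, absorbing a sign). An alternative, possibly cleaner route that avoids re-deriving the sign combinatorics is to simply verify $\widetilde\Phi_k = \Phi_k\circ\phi_k$ on the generators $s_i, c_j$ (the two base-case computations above) and then invoke that both $\widetilde\Phi_k\circ\phi_k^{-1}$ and $\Phi_k$ are algebra homomorphisms from $Ser_k$ — but that still requires knowing $\widetilde\Phi_k$ is multiplicative, so in the end the concatenation sign analysis cannot be dodged. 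I would present it as the direct check of $\widetilde\Phi_k(d_1 d_2)=\widetilde\Phi_k(d_2)\widetilde\Phi_k(d_1)$ followed by agreement on generators.
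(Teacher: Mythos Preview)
Your strategy is logically sound but takes a genuine detour compared to the paper, and in one place your sketch of the hard step is not quite right.

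The paper does \emph{not} establish multiplicativity of $\widetilde\Phi_k$ before this lemma; rather, it proves the lemma by a single direct computation for an arbitrary $d$, and multiplicativity of $\widetilde\Phi_k$ is then a \emph{consequence} (via $\widetilde\Phi_k=\Phi_k\circ\phi_k$ with $\phi_k$ an isomorphism and $\Phi_k$ a homomorphism). Concretely, for any $d$ one has $\phi_k(d)=\sigma c_{b_1}\cdots c_{b_q}$ with $b_1<\cdots<b_q$ the top vertices of the marked edges; the paper simply computes
\[
\Phi_k(\sigma c_{b_1}\cdots c_{b_q})(v_{\ii})
=\Phi_k(c_{b_q})\cdots\Phi_k(c_{b_1})\,\Phi_k(\sigma)(v_{\ii})
\]
one factor at a time and observes that the accumulated sign is exactly $(-1)^{\sum_c|c|+\sum_i m(\widetilde e_i)}$, i.e.\ the weight defining $\widetilde\Phi_k(d)$. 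No stacking of two general superdiagrams, and hence no reconciliation with $\rho(d_1,d_2)$ or $\ell(d_1,d_2)$, ever enters. So your claim that ``the concatenation sign analysis cannot be dodged'' is precisely what the paper refutes.

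By contrast, you propose to prove $\widetilde\Phi_k(d_1d_2)=\widetilde\Phi_k(d_2)\widetilde\Phi_k(d_1)$ directly and then check on generators. The generator checks you describe are fine. But your outline of the multiplicativity step contains a misconception: the arranging number $\ell(d_1,d_2)$ is \emph{not} a crossing count in the stacked diagram; it is the length of a permutation built from the sequence $\tau^{-1}(i_1)\cdots\tau^{-1}(i_p)\,j_1\cdots j_q$ tracking where marked edges land, and in the $\widetilde\Phi_k$ picture it must emerge from the $m(\widetilde e_i)$ bookkeeping, not from the $\prod_c(-1)^{|c|}$ factors. The crossing signs of $d_1$ and $d_2$ combine to those of $d_1*d_2$ already at the level of ordinary (unmarked) permutations, since $\Phi_k$ restricted to $\C\Sigma_k$ is a homomorphism; $\ell(d_1,d_2)$ comes from reordering the Clifford generators, which on the $\widetilde\Phi_k$ side means comparing the cumulative parity shifts in $m(\widetilde e_i)$ before and after concatenation. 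If you pursue your route, that is the identification you would actually need to carry out, and it essentially re-derives the sign analysis of Theorem~\ref{th:DD_{k}} a second time. The paper's direct computation is shorter and avoids all of this.
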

\begin{proof}
We observe that the action $\tPhi_k$ of the $k$-superdiagram without
marked edges is the same as the action $\Phi_k$ of the corresponding
permutation on $\Vk$.
    Assume that $d \in \DD_k$ has some marked edges.
     If we forget the marks on $d$, we obtain a permutation $\sigma$.
    Read the top vertices of the marked edges from left to right to obtain a
    sequence $b_1 \cdots b_q$ for $1 \leq b_1< \cdots <b_q \leq k$.
   For $\ii \in I^k$, $\idj$ is consistently labeled when
    $\jj=(i_{\sigma(1)}, \cdots, i_{\sigma(b_1 -1)}, \ol{i_{\sigma(b_1)}}, \cdots, \ol{i_{\sigma(b_q)}},
    i_{\sigma(b_q + 1)}, \cdots, i_{\sigma(k)})$.
    Therefore,
   $\tPhi_k(d)(v_{\ii})=(-1)^{\prod |c|}(-1)^{m(\te_1) +\cdots + m(\te_q)} v_{\jj}$,
   where $(-1)^{\prod |c|}$ comes from the crossing part of $\idj$.

   By the definition, $\phi_k(d)=\sigma c_{b_1} \cdots c_{b_q}$.
   Therefore we have
\begin{align*}
\Phi_k(\phi_k(d))& \big(v_{\ii} \big) = \Phi_k(\sigma c_{b_1} \cdots c_{b_q})(v_{\ii})=\Phi_k(c_{b_q})\cdots \Phi_{k}(c_{b_1}) \Phi_k(\sigma) \big(v_{\ii} \big) \\
   &=(-1)^{\prod |c|}\Phi_k(c_{b_q})\cdots \Phi_{k}(c_{b_1}) \big( v_{i_{\sigma(1)}} \tensor \cdots \tensor v_{i_{\sigma(k)}}) \\
   &=(-1)^{\prod |c|+m(\te_1)} \Phi_k(c_{b_q})\cdots \Phi_{k}(c_{b_2} \big)
   \big(v_{i_{\sigma(1)}} \tensor \cdots \tensor v_{\ol{i_{\sigma(b_1)}}} \tensor \cdots \tensor v_{i_{\sigma(k)}}\big) \\
   &=(-1)^{\prod |c|+m(\te_1) +\cdots +m(\te_q)}
   \ v_{i_{\sigma(1)}} \tensor \cdots \tensor v_{\ol{i_{\sigma(b_1)}}} \tensor \cdots
   \tensor v_{\ol{i_{\sigma(b_q)}}} \tensor \cdots \tensor
   v_{i_{\sigma(k)}},
   \end{align*}
which yields the desired result.
\end{proof}

Therefore, the map $\widetilde{\Phi}_k$ provides a well-defined
(right) action of $\DD_k$ on $\V^{\tensor k}$. Note that the action
of the $k$-superdiagram without marked edges (i.e., permutation
diagram) on $\V^{\otimes k}$ is given by graded place permutation.
Moreover, we have:

\begin{prop} \label{prop:duality between D_k and qn} \hfill

  {\rm (a) The actions of $\DD_k$ and $\qn$ on $\V^{\otimes k}$ supercommute with each other.

  (b) The homomorphism
   $\widetilde{\Phi}_k: \DD_k \longrightarrow \End_{\qn}(\V^{\tensor k})^{{\rm op}}$ is
   surjective.

  (c) If $n \geq k$, $\tPhi_{k}$ is an isomorphism.}

\end{prop}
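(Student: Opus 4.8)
The plan is to deduce all three parts directly from Sergeev's duality (Theorem \ref{th:Sergeev duality}) using the two facts already in hand: that $\phi_k\colon\DD_k\to Ser_k$ is an isomorphism of associative superalgebras (Theorem \ref{th:DD_{k}}), and that $\tPhi_k=\Phi_k\circ\phi_k$ as maps $\DD_k\to\End_\C(\V^{\tensor k})^{\op}$ (Lemma \ref{lem:duality for D_k and qn}). The first observation I would record is that, since $\phi_k$ is an \emph{even} superalgebra isomorphism and $\Phi_k$ is a superalgebra homomorphism, the composite $\tPhi_k$ is itself a superalgebra homomorphism into $\End_\C(\V^{\tensor k})^{\op}$, with image $\tPhi_k(\DD_k)=\Phi_k(\phi_k(\DD_k))=\Phi_k(Ser_k)$ and kernel $\Ker\tPhi_k=\phi_k^{-1}(\Ker\Phi_k)$.

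From there each assertion is a one-line transfer. For (a): by Theorem \ref{th:Sergeev duality}(a) the image of $\Phi_k$ lies in $\End_{\qn}(\V^{\tensor k})$, hence so does the image of $\tPhi_k$, and this containment is exactly the statement that the $\DD_k$- and $\qn$-actions on $\V^{\tensor k}$ supercommute (the graded-sign bookkeeping identifying ``homomorphism into $\End_\C(\V^{\tensor k})^{\op}$ whose image centralizes $\qn$'' with ``supercommuting actions'' is the same one used for $Ser_k$). For (b): applying $\Phi_k$ to the surjection $\phi_k$ and invoking Theorem \ref{th:Sergeev duality}(b) gives $\tPhi_k(\DD_k)=\Phi_k(Ser_k)=\End_{\qn}(\V^{\tensor k})^{\op}$, so $\tPhi_k$ is surjective. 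For (c): if $n\ge k$ then $\Phi_k$ is an isomorphism by Theorem \ref{th:Sergeev duality}(c), and composing with the isomorphism $\phi_k$ shows $\tPhi_k$ is an isomorphism.

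I do not expect a genuine obstacle here: all of the combinatorial and sign content has already been absorbed into the proofs of Theorem \ref{th:DD_{k}} and Lemma \ref{lem:duality for D_k and qn}. The only point deserving a moment's attention is confirming that $\tPhi_k$ is an algebra homomorphism and not merely a linear map with the same image as one — but this is immediate from the factorization $\tPhi_k=\Phi_k\circ\phi_k$ together with Theorem \ref{th:DD_{k}}. (As a sanity check one may also verify part (a) diagrammatically, observing that the action of a permutation diagram is graded place permutation and that inserting a marked edge corresponds to inserting a $c_j$, both of which visibly supercommute with the $\qn$-action; this reproves (a) without citing Theorem \ref{th:Sergeev duality}, and will in any case be the template for the $\BB$ computation in Section 4.)
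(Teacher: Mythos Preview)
Your proposal is correct and matches the paper's own proof essentially line for line: the paper simply observes that since $\phi_k$ is an even superalgebra isomorphism, all three assertions follow at once from Theorem~\ref{th:Sergeev duality} together with Lemma~\ref{lem:duality for D_k and qn}. Your write-up is in fact more explicit than the paper's, which dispatches (b) and (c) in a single sentence.
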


\begin{proof}
Because $\phi_k$ is an even homomorphism, we deduce (a) from Theorem
\ref{th:Sergeev duality} and Lemma \ref{lem:duality for D_k and qn}.
The assertions (b) and (c) follow from  Theorem \ref{th:Sergeev
duality} and Lemma \ref{lem:duality for D_k and qn} immediately.
\end{proof}

\vskip 5mm

\section{The walled Brauer superalgebras}

In this section, we will introduce a new family of superspaces $\BB$
with a basis consisting of $(r,s)$-superdiagrams. We will also
describe the natural (right) action of $\BB$ on $\mix$ and show that
it defines a linear isomorphism between $\BB$ and $\End_{\qn}(\mix)$
whenever $n \ge r+s$. Here, $\V=\C^{n|n}$ is the natural
representation of $\qn$ and  $\W=\V^{*}$ is its dual.

To begin with, we explain the $\qn$-supermodule structure on the
mixed tensor space $\mix$. Fix
$r,s \in \Z_{\geq 0}$ such that $r+s > 0$.
Let $I =\{1, \ldots, n, \ol 1, \ldots,
\ol n \}$. Recall that we fix a basis $\B = \{v_1, \ldots, v_n,
v_{\ol 1}, \ldots, v_{\ol n}\}$ of $\V$. Let us denote its dual
basis by $\B^{*} =\{w_1, \ldots, w_n, w_{\ol 1}, \ldots, w_{\ol
n}\}$ of $\W$ such that $w_i(v_j)=\delta_{i,j}$ for $i,j \in I$. The
universal enveloping algebra $U(\qn)$ is a Hopf superalgebra with
the comultiplication $\Delta$ and the antipode $S$ given by
$$\Delta(g)=g \tensor 1 +1 \tensor g, \quad S(g) =-g \ \ (g \in
\qn).$$ The dual space $\W$ becomes a $\qn$-supermodule via
$$(g w)(v):=(-1)^{|g||w|}w(S(g) v)$$ for homogeneous elements
$g \in \qn, w \in \W$ and $v\in \V$.

Let $E_{i,j}$ be the $n \times n$ matrix having $1$ at the
$(i,j)$-entry and $0$ elsewhere.
We define
$$e_{i,j}:=\left(
            \begin{array}{cc}
              E_{i,j} & 0 \\
              0 & E_{i,j} \\
            \end{array}
          \right), \ \ e_{i,\ol j}:=\left(
            \begin{array}{cc}
              0 & E_{i,j}  \\
              E_{i,j} &0\\
            \end{array}
          \right)$$
for $1 \leq i,j \leq n$. We obtain the formulae
\begin{equation} \label{eq:action of q(n)-elements on V,W}
\begin{aligned}
  e_{i,j} \ v_k &=\delta_{j,k}v_i+\delta_{\ol j,k}v_{\ol i},\\
    e_{i,j} \ w_k&=-(-1)^{|j||k|}(\delta_{i,k}w_j+\delta_{\ol i,k}w_{\ol j})
\end{aligned}
\end{equation}
for all $i \in \{ 1, \ldots, n\}$ and $j, k \in I$. Using the
comultiplication $\Delta$, we give a $\qn$-supermodule structure to
the mixed tensor space $\mix$. Let us denote this action by
$\rho:U(\qn) \rightarrow \End_{\C}(\mix)$. Then the {\it
supercentralizer algebra} $\End_{\qn}(\mix)$ of the $\qn$-action on
$\mix$ has a decomposition
$$\End_{\qn}(\mix):=\End_{\qn}(\mix)_{\ol 0} \oplus \End_{\qn}(\mix)_{\ol 1} ,$$ where
$$
\begin{aligned}
\End_{\qn} & (\mix)_{j}:  = \{f \in  \End_{\C}(\mix)_{j} \ |  \\
& \rho(g) f  =(-1)^{|g||f|} f \rho(g)  \ \  \text{for all
homogeneous} \ g \in \qn \}.
\end{aligned}
$$


\vskip3mm

Now we construct a combinatorial model for $\End_{\qn}(\mix)$. An
\emph{$(r,s)$-superdiagram} is a diagram with $(r+s)$ vertices on
the top and bottom rows, and a vertical wall separating the $r$-th
and $(r+1)$-th vertices (from left) in each row. Each vertex must be
connected to exactly one other vertex and each edge may (or may not)
be \emph{marked}. In addition, we require that each vertical edge
cannot cross the wall and each horizontal edge should cross the
wall. We number the vertices in each row of an $(r,s)$-superdiagram
from left to right with $1,2, \ldots, r+s$. The examples of
$(3,2)$-superdiagrams are given in Figure 2:
\begin{center}
\begin{figure}[!h]
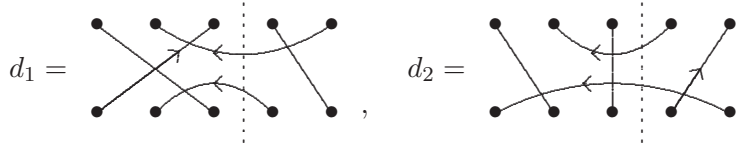

${\beginpicture
\setcoordinatesystem units <0.78cm,0.39cm>
\setplotarea x from 0 to 5.5, y from -1.5 to 4
\put{$d_1 = $} at 0 1.5
\put{$\bullet$} at  1 3  \put{$\bullet$} at  1 0
\put{$\bullet$} at  2 3  \put{$\bullet$} at  2 0
\put{$\bullet$} at  3 3  \put{$\bullet$} at  3 0
\put{$\bullet$} at  4 3  \put{$\bullet$} at  4 0
\put{$\bullet$} at  5 3  \put{$\bullet$} at  5 0
\plot 1 3  3 0 /
\plot 3 3  1 0 /
\plot 4 3  5 0 /
\arrow <3 pt> [1,2] from 1 0 to  2.4 2.1
\arrow <3 pt> [1,2] from 3.05 2.05 to 3 2.05
\arrow <3 pt> [1,2] from 3.05 1 to 3 1
\setdashes  <.4mm,1mm>
\plot 3.5 -1   3.5 4 /
\setsolid
\setquadratic
\plot 2 3  3.5 2 5 3 /
\plot 2 0 3 1  4 0 /
\endpicture}$,  \hskip1em  ${\beginpicture
\setcoordinatesystem units <0.78cm,0.39cm>
\setplotarea x from 0 to 5.5, y from -1.5 to 4
\put{$d_2 = $} at 0 1.5
\put{$\bullet$} at  1 3  \put{$\bullet$} at  1 0
\put{$\bullet$} at  2 3  \put{$\bullet$} at  2 0
\put{$\bullet$} at  3 3  \put{$\bullet$} at  3 0
\put{$\bullet$} at  4 3  \put{$\bullet$} at  4 0
\put{$\bullet$} at  5 3  \put{$\bullet$} at  5 0
\plot 1 3  2 0 /
\plot 3 3  3 0 /
\plot 5 3  4 0 /
\arrow <3 pt> [1,2] from 4 0 to  4.5 1.5
\arrow <3 pt> [1,2] from 2.7 2.05 to 2.65 2.05
\arrow <3 pt> [1,2] from 2.55 1 to 2.5 1
\setdashes  <.4mm,1mm>
\plot 3.5 -1   3.5 4 /
\setsolid
\setquadratic
\plot 2 3  3 2 4 3 /
\plot 1 0 3 1  5 0 /
\endpicture}$
\caption{$(3,2)$-superdiagrams} \label{Fig:example2}
\end{figure}
\end{center}
The $(r,s)$-superdiagram which has even (respectively, odd)
number of marked edges is regarded as \emph{even} (respectively,
\emph{odd}).

Let $\BB$ be the superspace with a basis consisting of
$(r,s)$-superdiagrams. The $(r,s)$-superdiagrams without marked
edges form a basis of the walled Brauer algebra $B_{r,s}(\delta)$
$(\delta \in \C)$ (See, for example, \cite{BCHLLS, BS, CW, N, SM}).

From an $(r+s)$-superdiagram in $\DD_{r+s}$, we obtain an $(r,s)$-superdiagram
by adding the wall between the $r$th and $(r+1)$th vertices and
interchanging the vertices on the top row
with the vertices on the bottom row on the right side of the wall
without disconnecting any of the edges and changing the mark of the edges.
We denote this map by
\begin{align} \label{def:flip between Ser_r+s and B_r,s}
{\rm flip}: \DD_{r+s} \longrightarrow \BB.
\end{align}
For example, the $(3,2)$-superdiagrams $d_1,d_2$ in Figure \ref{Fig:example2} can be obtained
by flipping the following $5$-superdiagrams in $\DD_{5}$, respectively:

\begin{center}
${\beginpicture
\setcoordinatesystem units <0.78cm,0.39cm>
\setplotarea x from -1.5 to 6, y from -1 to 4
\put{$\rm{flip}^{-1}(d_1) = $} at -0.8 1.5
\put{$\bullet$} at  1 3  \put{$\bullet$} at  1 0
\put{$\bullet$} at  2 3  \put{$\bullet$} at  2 0
\put{$\bullet$} at  3 3  \put{$\bullet$} at  3 0
\put{$\bullet$} at  4 3  \put{$\bullet$} at  4 0
\put{$\bullet$} at  5 3  \put{$\bullet$} at  5 0
\plot 1 3  3 0 /
\plot 2 3  5 0 /
\plot 3 3  1 0 /
\plot 4 3 2 0 /
\plot 5 3 4 0  /
\arrow <3 pt> [1,2] from 1 0 to  2.4 2.1
\arrow <3 pt> [1,2] from 2 0 to 3 1.5
\arrow <3 pt> [1,2] from 5 0 to 3.5 1.5
\put{,} at 6.5 1
\arrow <3 pt> [1,2] from 5.75 2.9 to 5.7 3.
\arrow <3 pt> [1,2] from 5.75 0.1 to 5.7 0
\setdashes  <.4mm,1mm>
\plot 3.7 -0.7   3.7 3.7 /
\plot 3.7 3.7  5.3 3.7 /
\plot 5.3 -0.7 5.3 3.7  /
\plot 5.3 -0.7 3.7 -0.7 /
\setsolid
\setquadratic
\plot 5.7 3 6 1.5 5.7 0 /
\endpicture}$
 \ \ \
 {\beginpicture
\setcoordinatesystem units <0.78cm,0.39cm>
\put{$\rm{flip}^{-1}(d_2) = $} at -0.8 1.5
\put{$\bullet$} at  1 3  \put{$\bullet$} at  1 0
\put{$\bullet$} at  1.8 3  \put{$\bullet$} at  1.8 0
\put{$\bullet$} at  2.8 3  \put{$\bullet$} at  2.8 0
\put{$\bullet$} at  3.5 3  \put{$\bullet$} at  3.5 0
\put{$\bullet$} at  4.2 3 \put{$\bullet$} at  4.2 0
\plot 1 3  1.8 0 /
\plot 1.8 3  3.5 0 /
\plot 2.8 3  2.8 0 /
\plot 3.5 3 4.2 0 /
\plot 4.2 3 1 0 /
\arrow <3 pt> [1,2] from 3.5 0 to  2.2 2.29
\arrow <3 pt> [1,2] from 4.2 0 to 3.8 1.71
\arrow <3 pt> [1,2] from 1 0 to 3.4 2.25

\arrow <3 pt> [1,2] from 4.95 2.9 to 4.9 3.
\arrow <3 pt> [1,2] from 4.95 0.1 to 4.9 0
\setdashes  <.4mm,1mm>
\plot 3.1 -0.7   3.1 3.7 /
\plot 3.1 3.7    4.5 3.7 /
\plot 4.5 -0.7   4.5 3.7  /
\plot 4.5 -0.7   3.1 -0.7 /
\setsolid
\setquadratic
\plot 4.9 3 5.2 1.5 4.9 0 /
\endpicture}
\end{center}
It is clear that the map $\flip$ is a bijection, which implies
$\dim_{\C} \BB=\dim_{\C} \DD_{r+s}=2^{r+s}(r+s)!$.

As we did for $\DD_{k}$ on $\V^{\otimes k}$ in Section 2, we will
define a (right) action of each $(r,s)$-superdiagram in $\BB$ on the
mixed tensor space $\mix$. In this section, we will define a linear
map $\Psi_{r,s}$ from $\BB$ to $\End_{\qn}(\mix)$ and will complete
the job in Section 4 by showing that $\Psi_{r,s}$ is a superalgebra
homomorphism.

Given ${\bf i}=(i_1, \ldots, i_{r}, i_{r+1}, \ldots, i_{r+s}) \in
I^{r+s}$, let ${\bf i}^L:=(i_1, \ldots, i_{r}), \ {\bf i}^R
:=(i_{r+1}, \ldots, i_{r+s})$ and ${\bf i}={\bf i}^L{\bf i}^R$. We
write $v_{\bf{i}}=v_{i_1} \tensor \cdots \tensor v_{i_r}$ and
$w_{\bf{j}}=w_{j_1} \tensor \cdots \tensor w_{j_s}$ for $\ii \in
I^r, \jj \in I^s$. Clearly,
$\{v_{{\bf i}^L} \tensor w_{{\bf i}^R} \ | \ {\bf i}^L\in I^r, {\bf
i}^R \in I^s  \}$ forms a basis of $\mix$.
 For $\ii, \jj \in I^{r+s}$ and $d \in \BB$, we label
 the vertices on the bottom row of $d$ with
 $i_1 \ldots i_{r+s}$ (from left to right) and the vertices on the top row with $d$ by $j_1 \ldots, j_{r+s}$
 (from left to right). We denote this labeled diagram by $_{\ii} d_{\jj}$.
 We define the \emph{weight} of the labeled diagram $\idj$ to be
 \begin{align*}
   \wtd := \prod_{e} \delta_e \prod_{h}(-1)^{|h|}\prod_c(-1)^{|c|}\prod_{e_i, \widetilde{e}_i}(-1)^{m(e_i)+m(\widetilde{e}_i)},
 \end{align*}
 where the notations are explained below:
 \begin{enumerate}
   \item  The first product is taken over all edges $e$ in $\idj$.
For each normal (vertical or horizontal) edge $e$ with label $i$ and
$j$ at the end points, let $\delta_e=\delta_{i,j}$.
   We define $\delta_e=\delta_{{\ol i},j}$ for each marked vertical edge $e$ with label $i$ at the bottom vertex
   and $j$ at the top vertex,
   or for each marked horizontal edge $e$ with label $i$ at the left vertex and $j$
   at the right vertex.

   \item The second product is taken over all (normal or marked) horizontal edges $h$ on the bottom row in $\idj$.
   For each (normal or marked) horizontal edge $h$ on the bottom row
   with label $j$ at the \emph{right} vertex of the edge,
   we define $|h|=|j|$.

   \item The third product is taken over all the crossings $c$ in $\idj$.
    We define $|c|=|e_1||e_2|$ for each crossing of two different edges $e_1$ and $e_2$, where
   we define $|e_i|=|i_m|$ if the edge $e_i$ is a (normal or marked) vertical edge with label $i_m$
   at the \emph{bottom} vertex, or a (normal or marked) horizontal edge with label $i_m$ at the \emph{right} vertex.

   \item The last product is taken all marked (vertical or horizontal) edges in $\idj$.

   (i) Read the \emph{left} vertices of the marked horizontal edges on the bottom row in order from left to
   right. Then we obtain a sequence $a_1 \cdots a_p$ for $1 \leq a_1 < \cdots <a_p \leq r$. Let each marked
   edge be $e_i$. We define
   $$
   \begin{aligned}
   m(e_i)&=m(e_{i-1})+|i_{a_{i-1}+1}| + \cdots + |i_{a_{i}}|+1,\\
   m(e_1)&=|i_1|+\cdots+|i_{a_1}|+1.
   \end{aligned}
   $$

   (ii) Read the \emph{top} vertices of the marked vertical edges and the \emph{left} vertices of the marked horizontal edges
   on the top row in order from left to right.
   Then we obtain a sequence $b_1 \cdots b_q$ for $1 \leq b_1< \cdots <b_q \leq
   r+s$. Let each marked edge $\widetilde{e}_i$.
   We define ${m(\widetilde{e}_i)}$ as follows: (let $|j_0|=0$)
   \begin{align*}
   &{m(\widetilde{e}_i)}=|j_1| + \cdots + |j_{b_i}| &&\text{ if   } \; b_i \leq r,\\
      &{m(\widetilde{e}_i)}=|j_1| +\cdots +|j_{(b_i)-1}| &&\text{ if    } \; b_i > r.
\end{align*}
 \end{enumerate}
If $\wt(\idj)\neq 0$, we say that $\idj$ is \emph{consistently
labeled}.

\begin{example}
The labeled diagram $\idj$ of $d$ in Figure 3 has
$$\wt(\idj)=\delta_{\ol i_1,i_3} \delta_{\ol i_2,j_2} \delta_{\ol i_4,j_3} \delta_{j_1, j_4} (-1)^{|i_3|}
(-1)^{|i_2||i_3|+|i_2||j_4|+|i_4||j_4|} (-1)^{(|i_1|+1) +(|j_1|+|j_2|)+(|j_1|+|j_2|)}.$$

\begin{figure}[!h]
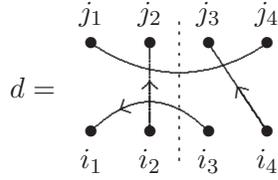

$${\beginpicture
\setcoordinatesystem units <0.78cm,0.39cm>
\setplotarea x from 0 to 4.5, y from -1.5 to 5
\put{$d= $} at 0 1.5
\put{$\bullet$} at  1 0  \put{$\bullet$} at  1 3
\put{$\bullet$} at  2 0  \put{$\bullet$} at  2 3
\put{$\bullet$} at  3 0  \put{$\bullet$} at  3 3
\put{$\bullet$} at  4 0  \put{$\bullet$} at  4 3

\put{$i_1$} at 1 -1
\put{$i_2$} at 2 -1
\put{$i_3$} at 3 -1
\put{$i_4$} at 4 -1
\put{$j_1$} at 1 4
\put{$j_2$} at 2 4
\put{$j_3$} at 3 4
\put{$j_4$} at 4 4

\plot 2 3 2 0 /
\plot 3 3 4 0 /
\arrow <3 pt> [1,2] from 2 0 to  2 1.7
\arrow <3 pt> [1,2] from 1.55 0.8 to  1.5 0.75
\arrow <3 pt> [1,2] from 4 0 to 3.5 1.5
\setdashes  <.4mm,1mm>
\plot 2.5 -1   2.5 4 /
\setsolid
\setquadratic
\plot 1 3 2.5 2 4 3 /
\plot 1 0 2 1 3 0  /
\endpicture}$$
\caption{$(2,2)$-superdiagram $d$} \label{ex:wt of d}
\end{figure}
\end{example}

The above procedure provides a linear map $\Psi_{r,s}$ from $\BB$
into $\End_{\C}(\mix)$:
\begin{eqnarray} \label{def:action of B_r,s}
  \Psi_{r,s}:   \BB & \longrightarrow &  \End_{\C}(\mix)\\
 \nonumber              d &\longmapsto         &  \left( v_{\iiL} \tensor w_{\iiR} \mapsto
        \sum_{\jj \in I^{r+s}} \wt(\idj) \ v_{\jjL} \tensor w_{\jjR} \right).
\end{eqnarray}
Note that the linear map $\Psi_{r,s}$ corresponding to the
$(r,s)$-superdiagram without marked edges is the same as the action
of the walled Brauer diagram given in \cite[Section 7]{BS}.

Define $c_i$ to be the following $(r,s)$-superdiagram:

\begin{center}
    \begin{equation} \label{def:a superdiagram c_i-1}
    {\beginpicture
\setcoordinatesystem units <0.78cm,0.39cm>
\setplotarea x from -0.5 to 12, y from -1.5 to 4.5
\put{$c_i := $} at 0 1.5
\put{$\bullet$} at  1 0  \put{$\bullet$} at  1 3
\put{$\bullet$} at  3 0  \put{$\bullet$} at  3 3
\put{$\bullet$} at  4 0  \put{$\bullet$} at  4 3
\put{$\bullet$} at  5 0  \put{$\bullet$} at  5 3
\put{$\bullet$} at  7 0  \put{$\bullet$} at  7 3
\put{$\bullet$} at  8 0  \put{$\bullet$} at  8 3
\put{$\bullet$} at  10 0  \put{$\bullet$} at  10 3
\put{$\cdots$} at 2 1.5
\put{$\cdots$} at 6 1.5 \put{$\cdots$} at 9 1.5
\put{\small{ $1$}} at 1 4
\put{\small{$i-1$}} at 3 4
\put{\small{$i$}} at 4 4
\put{\small{$i+1$}} at 5 4
\put{\small{$r+s$}} at 10 4
\put{\small{$r$}} at 7 4
\put{if $ \ i \leq r$,} at 12.5 1.5
\plot 1 3 1 0 /
\plot 3 3 3 0  /
\plot 4 3 4 0 /
\plot 5 3 5 0 /
\plot 7 3 7 0 /
\plot 8 3 8 0 /
\plot 10 3 10 0 /
\arrow <3 pt> [1,2] from 4 0 to  4 1.7
\setdashes  <.4mm,1mm>
\plot 7.5 -1   7.5 4 /
\endpicture}
\end{equation}
\begin{equation} \label{def:a superdiagram c_i-2}
{\beginpicture \setcoordinatesystem units <0.78cm,0.39cm>
\setplotarea x from -0.5 to 13, y from -1.5 to 4.5 \put{$c_i := $}
at 0 1.5 \put{$\bullet$} at  1 0  \put{$\bullet$} at  1 3
\put{$\bullet$} at  3 0  \put{$\bullet$} at  3 3
\put{$\bullet$} at  4 0  \put{$\bullet$} at  4 3
\put{$\bullet$} at  6 0  \put{$\bullet$} at  6 3
\put{$\bullet$} at  7 0  \put{$\bullet$} at  7 3
\put{$\bullet$} at  8 0  \put{$\bullet$} at  8 3
\put{$\bullet$} at  10 0  \put{$\bullet$} at  10 3
\put{$\cdots$} at 2 1.5
\put{$\cdots$} at 5 1.5 \put{$\cdots$} at 9 1.5
\put{\small{$1$}} at 1 4
\put{\small{$i-1$}} at 6 4
\put{\small{$i$}} at 7 4
\put{\small{$i+1$}} at 8 4
\put{\small{$r+s$}} at 10 4
\put{\small{$r$}} at 3 4
\put{if $ \ i \geq r+1$.} at 13 1.5
\plot 1 3 1 0 /
\plot 3 3 3 0  /
\plot 4 3 4 0 /
\plot 6 3 6 0 /
\plot 7 3 7 0 /
\plot 8 3 8 0 /
\plot 10 3 10 0 /
\arrow <3 pt> [1,2] from 7 0 to  7 1.7
\setdashes  <.4mm,1mm>
\plot 3.5 -1   3.5 4 /
\endpicture} 
\end{equation}
\end{center}

We decompose the $(2,2)$-superdiagram $d$ in Figure \ref{ex:wt of d}
into the three parts as follows:

$${\beginpicture
\setcoordinatesystem units <0.78cm,0.39cm>
\put{$d = $} at 0 1.5
\put{$\bullet$} at  1 0  \put{$\bullet$} at  1 3
\put{$\bullet$} at  2 0  \put{$\bullet$} at  2 3
\put{$\bullet$} at  3 0  \put{$\bullet$} at  3 3
\put{$\bullet$} at  4 0  \put{$\bullet$} at  4 3
\plot 2 3 2 0 /
\plot 3 3 4 0 /
\arrow <3 pt> [1,2] from 2 0 to  2 1.7
\arrow <3 pt> [1,2] from 1.55 0.8 to  1.5 0.75
\arrow <3 pt> [1,2] from 4 0 to 3.5 1.5
\setdashes  <.4mm,1mm>
\plot 2.5 -1   2.5 4 /
\setsolid

\put{$=$} at 5 1.5

\put{$\bullet$} at  6 0  \put{$\bullet$} at  6 3
\put{$\bullet$} at  7 0  \put{$\bullet$} at  7 3
\put{$\bullet$} at  8 0  \put{$\bullet$} at  8 3
\put{$\bullet$} at  9 0  \put{$\bullet$} at  9 3
\plot 7 3 7 0 /
\plot 8 3 9 0 /

\put{$\bullet$} at  6 4  \put{$\bullet$} at  6 7
\put{$\bullet$} at  7 4  \put{$\bullet$} at  7 7
\put{$\bullet$} at  8 4  \put{$\bullet$} at  8 7
\put{$\bullet$} at  9 4  \put{$\bullet$} at  9 7
\plot 6 4 6 7 /
\plot 9 4 9 7 /
\plot 7 4 7 7 /
\plot 8 4 8 7 /
\arrow <3 pt> [1,2] from 8 4 to 8 5.7
\arrow <3 pt> [1,2] from 7 4 to 7 5.7

\put{$\bullet$} at  6 -1  \put{$\bullet$} at  6 -4
\put{$\bullet$} at  7 -1  \put{$\bullet$} at  7 -4
\put{$\bullet$} at  8 -1  \put{$\bullet$} at  8 -4
\put{$\bullet$} at  9 -1  \put{$\bullet$} at  9 -4
\plot 6 -1 6 -4 /
\plot 7 -1 7 -4 /
\plot 8 -1 8 -4 /
\plot 9 -1 9 -4 /
\arrow <3 pt> [1,2] from 6 -4 to 6 -2.3

\setdashes  <.4mm,1mm>
\plot 7.5 -5   7.5 8 /
\setsolid

\setquadratic
\plot 6 3 7.5 2 9 3 /
\plot 6 0 7 1 8 0  /
\plot 1 3 2.5 2 4 3 /
\plot 1 0 2 1 3 0  /
\endpicture}$$

\vskip1em Let $d'$ be the $(2,2)$-superdiagram obtained from $d$ by
forgetting the marks on $d$. By a direct calculation, we observe
$$\Psi_{2,2}(d)=\Psi_{2,2}(c_1)\Psi_{2,2}(d')\Psi_{2,2}(c_2)
\Psi_{2,2}(c_3).$$ The general case is stated in the following
lemma, which can be proved by a similar calculation.

\begin{lemma} \label{lem:decomposition of a superdiagram}
{\rm  Let $d$ be an $(r,s)$-superdiagram with the sequences $a_1
\cdots a_p$ and $b_1 \cdots b_q$ which are given in (4) (i), (ii).
If $d'$ is the $(r,s)$-superdiagram obtained from $d$ by forgetting
the marks on $d$,
   then we have
  $$\Psi_{r,s}(d)= \Psi_{r,s}(c_{a_1}) \cdots \Psi_{r,s}(c_{a_p}) \Psi_{r,s} (d') \Psi_{r,s}(c_{b_1}) \cdots
  \Psi_{r,s}(c_{b_q}) \in \End_{\C}(\mix)^{\op}. $$}
\end{lemma}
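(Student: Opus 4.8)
The plan is to reduce the claimed factorization for a general $(r,s)$-superdiagram $d$ to the one-step situation already inspected in the worked $(2,2)$-example, namely the statement that multiplying the $\qn$-module-side operators corresponding to the generators $c_i$ on the right amounts to applying the odd involution $P$ at the appropriate tensor factor with the correct sign. Concretely, I would first record the action of $\Psi_{r,s}(c_i)$ on a basis vector $v_{\iiL}\tensor w_{\iiR}$: from the weight formula it picks out the unique consistent labeling, changes the $i$-th label via $P$ (taking $v_\ell\mapsto(-1)^{|\ell|+1}v_{\ol\ell}$ if $i\le r$, and the dual operation on $w$ if $i>r$), and produces a sign that is exactly $(-1)$ times the parity of the labels strictly to the left of position $i$ (with the $b_i\le r$ versus $b_i>r$ case distinction built into item (4)(ii)). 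This is the ``atomic'' computation; it is a direct unwinding of the definition of $\wt$ for the very simple diagram $c_i$, whose only non-vertical-trivial feature is a single marked vertical edge at position $i$.

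\textbf{Second}, I would handle $\Psi_{r,s}(d')$ for the unmarked diagram $d'$: by the remark following \eqref{def:action of B_r,s}, this is the classical walled Brauer diagram action of \cite[Section 7]{BS}, so it permutes/contracts tensor factors with Koszul signs but never applies $P$. Then I would compose, in $\End_\C(\mix)^{\op}$, the operators
$$
\Psi_{r,s}(c_{a_1})\cdots\Psi_{r,s}(c_{a_p})\,\Psi_{r,s}(d')\,\Psi_{r,s}(c_{b_1})\cdots\Psi_{r,s}(c_{b_q})
$$
on a basis vector and compare with $\Psi_{r,s}(d)(v_{\iiL}\tensor w_{\iiR})$ read off directly from the weight formula for the marked diagram $d$. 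Here one must be careful that the product in $\End_\C(\mix)^{\op}$ means the $c_{a_j}$'s act \emph{first} (on the source side, at the bottom vertices, which is why their indices $a_j$ refer to bottom positions and the sequence $a_1<\cdots<a_p$ is read left to right at the bottom), then $d'$ redistributes the factors, then the $c_{b_j}$'s act on the resulting (top-side) positions $b_j$. The bookkeeping splits into: (a) the $\delta$-factors, which match trivially since marking edge $e$ just replaces $\delta_{i,j}$ by $\delta_{\ol i,j}$, exactly the effect of inserting one $P$; (b) the crossing signs $\prod_c(-1)^{|c|}$ and the horizontal-edge signs $\prod_h(-1)^{|h|}$, which are identical for $d$ and $d'$ because crossings and horizontal edges do not see the marks, and the $c_i$'s have no crossings; and (c) the accumulated $P$-signs, which is where the whole content sits.

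\textbf{Third and main}, the signs in (c) must be shown to telescope to exactly $\prod_{e_i,\widetilde e_i}(-1)^{m(e_i)+m(\widetilde e_i)}$. For the top-side marked edges this is immediate: applying $\Psi_{r,s}(c_{b_i})$ in left-to-right order $b_1<\cdots<b_q$ to the vector already produced by $d'$ contributes a sign $(-1)$ times the parity of the labels to the left of position $b_i$ in the top row, and since $d'$ has already placed the $j$-labels there, this parity is $|j_1|+\cdots+|j_{b_i}|$ (or $|j_1|+\cdots+|j_{(b_i)-1}|$ when $b_i>r$, because a horizontal edge occupies the $b_i$-th slot differently), which is precisely $m(\widetilde e_i)$. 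For the bottom-side marked edges the subtlety is that $\Psi_{r,s}(c_{a_i})$ acts on the \emph{source} vector where the labels are $i_1,\ldots,i_{r+s}$, but each successive application has already replaced labels $i_{a_1},\ldots,i_{a_{i-1}}$ by their $P$-images, which have opposite parity; this is exactly what the recursive definition $m(e_i)=m(e_{i-1})+|i_{a_{i-1}+1}|+\cdots+|i_{a_i}|+1$ encodes (the $+1$'s accumulate the parity flips of the previously marked positions plus the extra $-1$ per marked edge, and $m(e_1)=|i_1|+\cdots+|i_{a_1}|+1$ is the base case). \textbf{The hard part} is precisely verifying this telescoping of the bottom-side signs against that recursion while simultaneously tracking how $d'$ then transports the already-$P$-flipped labels to the top row without further sign interference — i.e., checking that the phrase ``in $\End_\C(\mix)^{\op}$'' has been interpreted consistently and that no stray Koszul sign from commuting an odd $c_i$-operator past $d'$ or past another $c_j$ has been dropped. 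I would organize this by induction on $p+q$, peeling off $c_{a_1}$ (or $c_{b_q}$) and invoking the atomic computation from the first step, so that each inductive step is a single application of the $P$-at-one-slot formula; the examples already in the text ($d$ in Figure~\ref{ex:wt of d}) serve as the consistency check for the base cases.
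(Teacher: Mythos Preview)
Your proposal is correct and follows essentially the same approach as the paper's (implicit) proof: the paper states the lemma immediately after the $(2,2)$-example with the remark that the general case ``can be proved by a similar calculation,'' and the analogous Lemma~\ref{lem:duality for D_k and qn} for $\DD_k$ is proved by exactly the kind of sign bookkeeping you outline. Your observation that the $c_{a_j}$'s only alter labels at \emph{left} vertices of bottom horizontal edges, while crossing signs and $\prod_h(-1)^{|h|}$ read only \emph{right} vertex labels, is precisely the reason part (b) goes through cleanly and is the one point you could make more explicit in the write-up.
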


Since $\Psi_{r,s}(d')$ is even and $\Psi_{r,s}(c_j)$ is odd, we
conclude that the map $\Psi_{r,s}$ is an even linear map. Moreover,
we obtain

\begin{prop} \label{lem:supercommute between B_r,s and q(n)}
{\rm The linear maps $\Psi_{r,s}(d)$ and $\rho(g)$ on $\mix$
supercommute with each other for all $d \in \BB$ and $g \in \qn$.
That is, the image of $\Psi_{r,s}$ is contained in
$\End_{\qn}(\mix)$. }
\end{prop}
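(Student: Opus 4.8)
The plan is to reduce the claim to the already-established Sergeev case via the two structural results proved just above. First I would recall that, by Lemma \ref{lem:decomposition of a superdiagram}, every $(r,s)$-superdiagram $d$ satisfies
$$\Psi_{r,s}(d)= \Psi_{r,s}(c_{a_1}) \cdots \Psi_{r,s}(c_{a_p})\, \Psi_{r,s}(d')\, \Psi_{r,s}(c_{b_1}) \cdots \Psi_{r,s}(c_{b_q})$$
in $\End_{\C}(\mix)^{\op}$, where $d'$ is the underlying marked-free walled Brauer diagram. Since the subspace of $\qn$-module endomorphisms is closed under composition, it suffices to check that each factor on the right supercommutes with $\rho(g)$ for all homogeneous $g \in \qn$; that is, it suffices to treat the two generating families: (1) the marked-free diagrams $d'$, i.e.\ the ordinary walled Brauer diagrams, and (2) the single-marked diagrams $c_i$.

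For family (1) I would simply cite the classical mixed Schur-Weyl setup: the action $\Psi_{r,s}$ on marked-free diagrams agrees with the walled Brauer action recalled from \cite[Section 7]{BS}, which is built from the graded place-permutation action together with the contraction/coevaluation maps $\V \tensor \W \to \C$ and $\C \to \W \tensor \V$. Each of these is a $\qn$-module map (the first because $\W=\V^*$ is dual and one uses the antipode in the coproduct, the second because it is adjoint to the first), so their tensor products and composites commute with $\rho(g)$ up to the Koszul sign, giving the supercommutation for all $d'$. For family (2), the diagram $c_i$ acts on $\mix$ as identity in all tensor slots except the $i$-th, where it acts by the odd involution $P$ if $i \le r$ (on a $\V$-factor) or by the dual odd operator $P^*$ (suitably signed) if $i > r$ (on a $\W$-factor). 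Here the key computation is that $P \in \End_\C(\V)$ supercommutes with the $\qn$-action on $\V$ — which is exactly the content built into the Sergeev side (the $c_j$ of $Ser_k$ supercommute with $\qn$ on $\Vk$ by Theorem \ref{th:Sergeev duality}(a) together with Lemma \ref{lem:duality for D_k and qn}) — and that the analogous statement on $\W$ follows by dualizing, using $(gw)(v)=(-1)^{|g||w|}w(S(g)v)$ and $\langle P^* w, v\rangle = \pm\langle w, Pv\rangle$, together with $[g,P]=0$ and $S(g)=-g$. Assembling the two families through Lemma \ref{lem:decomposition of a superdiagram} then gives the proposition.

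The main obstacle I anticipate is bookkeeping the Koszul signs consistently across the two conventions: the left $\qn$-action on $\mix$ via the iterated coproduct $\Delta$ versus the right $\BB$-action written in $\End_{\C}(\mix)^{\op}$, and in particular verifying that the sign $(-1)^{|g||f|}$ in the definition of $\End_{\qn}(\mix)_j$ is exactly what the weight formula for $\wt({}_{\ii}d_{\jj})$ produces when $g$ is pushed past a marked edge (the factors $(-1)^{m(e_i)+m(\widetilde e_i)}$) and past a horizontal edge (the factor $(-1)^{|h|}$ and the crossing signs $(-1)^{|c|}$). Rather than grind through all cases, I would handle the generators $e_{i,j}$ and $e_{i,\ol j}$ using \eqref{eq:action of q(n)-elements on V,W}, check supercommutation slot-by-slot for $c_i$ and for a single contraction/coevaluation, and then invoke that supercommutation with a generating set of the Lie superalgebra, plus closure of $\End_{\qn}(\mix)$ under composition and the decomposition of Lemma \ref{lem:decomposition of a superdiagram}, to conclude for arbitrary $d \in \BB$ and arbitrary $g \in U(\qn)$.
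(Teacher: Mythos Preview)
Your proposal is correct and follows essentially the same route as the paper: both use Lemma \ref{lem:decomposition of a superdiagram} to reduce to the unmarked diagrams (handled by citing \cite[Lemma 7.4]{BS}) and the single generators $c_j$, and then verify the supercommutation for $c_j$ directly. The only cosmetic difference is that the paper carries out the $c_j$ case by an explicit slot-by-slot computation on $\mix$ using the identity $\overline{e_{i,j}w_k}=(-1)^{|j|}e_{i,j}w_{\overline{k}}$ (and its $\V$-analogue), rather than phrasing it as ``$[g,P]=0$ plus dualize''; the content is the same.
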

\begin{proof}
  By \cite[Lemma 7.4]{BS}, we know that the action of the $(r,s)$-superdiagram
  without the marked edges commutes with the action of $\qn$.
  To prove the general case, by Lemma \ref{lem:decomposition of a superdiagram}, it is enough to show that
   \begin{equation} \label{eq:supercommute}
   \rho(g) \Psi_{r,s}(c_j)=(-1)^{|g|}\Psi_{r,s}(c_j) \rho(g)
   \end{equation}
      for all homogeneous elements $g \in \qn$.


Define the bar involution on $\V$ (resp. on $\W$) by $\overline{v_i}
= v_{\overline{i}}$ (resp. $\overline{w_i} = w_{\overline{i}}$) for
all $i\in I$. Using \eqref{eq:action of q(n)-elements on V,W}, it is
easy to check that
  \begin{align} \label{eq:equality for supercommute}
    \overline{e_{i,j}v_k}=e_{i,j}v_{\ol k},  \ \ \overline{e_{i,j}w_k}=(-1)^{|j|}e_{i,j}w_{\overline{k}}
  \end{align}
for all $i \in \{ 1, \ldots, n\}$ and $j, k \in I$. Here, we use the
notation $\overline{\overline i} =i$ for $i=1, \ldots, n$ and
$\delta_{i,j}=\delta_{\ol i,\ol j}$, $\delta_{\ol i, j}=\delta_{i,
\ol j}$ for $i, j \in I$.

   Suppose $r+1 \leq j \leq r+s$ and  we will compare the both sides of \eqref{eq:supercommute}.
   Set  $$v_{\iiL}=\iiL \ \  \text{and} \ \ w_{\iiR} =\iiR \quad
   \text{for} \ \
   \ii \in I^{r+s}.$$ Then the left-hand side of
   \eqref{eq:supercommute} is equal to
   \begin{align*}
    & \rho(g) \left(\Psi_{r,s}(c_j) (\iiL \tensor \iiR) \right) \\
    & = \rho(g) \big( (-1)^{|i_1| + \cdots +|i_{j-1}|}
     \ i_1 \tensor \cdots \tensor i_{j-1} \tensor \ol{i_j} \tensor i_{j+1} \tensor  \cdots  \tensor i_{r+s} \big)\\
   &=\sum_{k < j} (-1)^{|i_1| + \cdots +|i_{j-1}| +|g|(|i_1|+\cdots+|i_{k-1}|)}
   \ i_1 \tensor \cdots \tensor g \cdot i_k \tensor \cdots \tensor \ol{i_j} \tensor \cdots \tensor i_{r+s}\\
   &  \ \ + (-1)^{|i_1| + \cdots +|i_{j-1}| +|g|(|i_1|+\cdots+|i_{j-1}|)}
   \  i_1 \tensor \cdots \tensor g \cdot {\overline{i_j}} \tensor \cdots \tensor i_{r+s}\\
   & \ \ +\sum_{k>j} (-1)^{|i_1| + \cdots +|i_{j-1}| +|g|(|i_1|+\cdots+ (|i_j|+1)+ \cdots +|i_{k-1}|) }
    \ i_1 \tensor \cdots \tensor \ol{i_j} \tensor  \cdots \tensor g \cdot i_k \tensor \cdots \tensor i_{r+s}.
       \end{align*}
   On the other hand, the right-hand side of \eqref{eq:supercommute}
   is the same as
  \begin{align*}
     &\Psi_{r,s}(c_j) \left(\rho(g) (\iiL \tensor \iiR) \right)\\
     &=\Psi_{r,s}(c_j) \big(\sum_{k=1}^{r+s} (-1)^{|g|(|i_1|+\cdots+|i_{k-1}|)}
       \ i_1 \tensor \cdots \tensor i_{k-1} \tensor g \cdot i_k \tensor i_{k+1} \tensor \cdots \tensor i_{r+s} \big)\\
   &=\sum_{k < j} (-1)^{|g|(|i_1|+\cdots+|i_{k-1}|) +|i_1| + \cdots+(|g|+|i_k|)+ \cdots +|i_{j-1}|}
    \  i_1 \tensor \cdots \tensor g \cdot i_k \tensor \cdots \tensor \ol{i_j} \tensor \cdots \tensor i_{r+s}\\
   & \ \ +(-1)^{|g|(|i_1|+\cdots+|i_{j-1}|) +|i_1| + \cdots +|i_{j-1}|}
     \  i_1 \tensor \cdots \tensor \overline{g \cdot i_j} \tensor \cdots \tensor i_{r+s} \\
   & \ \ +\sum_{k>j} (-1)^{|g|(|i_1|+\cdots+|i_{k-1}|) +|i_1| + \cdots +|i_{j-1}|}
      \ i_1 \tensor \cdots \tensor \ol{i_j} \tensor \cdots \tensor g \cdot i_k \tensor \cdots \tensor i_{r+s} .
      \end{align*}
   Now our assertion follows from \eqref{eq:equality for supercommute}.

   The other case $1 \leq j \leq r$ can be verified in a similar manner.
\end{proof}

We will show that the map $\Psi_{r,s}: \BB \rightarrow
\End_{\qn}(\mix)$ is a linear isomorphism whenever $n \ge r+s$. Our
argument will follow the outline given in \cite[Theorem 3.4]{N} and
\cite[Theorem 7.8]{BS}, in which the case of $\mathfrak{gl}(m)$ and
$\mathfrak{gl}(m|n)$ were treated, respectively.

Let $M,N,K$ and $L$ be finite dimensional $\qn$-supermodules. We
define the action of $\qn$ on $\Hom_{\C}(M,N)$ by
$$(gf)(m):=g(f(m))-(-1)^{|g||f|}f(gm)$$
for homogeneous elements $f \in \Hom_{\C}(M,N)$, $g \in \qn$ and $m
\in M$. Note that a linear map $f:M \rightarrow N$ is a
$\qn$-supermodule homomorphism if and only if  $f$ is annihilated by
all $g \in \qn$. We define $f^* \in \Hom_{\C}(N^*, M^*)$ by
$$(f^*(\lambda))(m):=(-1)^{|f||\lambda|} \lambda(f(m))$$
for homogeneous element $ \lambda \in N^*$ and all $m \in M$.
We identify
$$\Hom_{\C}(M,N) \tensor \Hom_{\C}(K,L)=\Hom_{\C}(M \tensor K, N \tensor L) $$
by
 \begin{align} \label{eq:identification}
   (f \tensor g)(m \tensor k)=(-1)^{|g||m|} f(m) \tensor g(k)
 \end{align}
for $f \in \Hom_{\C}(M,N), k \in K$ and homogeneous elements $g \in
\Hom_{\C}(K,L), m \in M$.

By \cite[Lemma 7.4]{BS}, one can deduce that the map ${\it flip}$ is a
$\qn$-supermodule isomorphism:
\begin{equation}
\begin{aligned}
   {\it flip}: & \ \End_{\C}(\Vrs)& &\longrightarrow &&\End_{\C}(\mix)\\
           &  \ \ \  \ f\tensor g&    &\longmapsto &&  \ \  \ \ f \tensor g^*,
\end{aligned}
\end{equation}
where $f \in \End_{\C}(\Vr)$ and $g \in \End_{\C}(\V^{\tensor s})$.

Recall the map ${\rm flip}: \DD_{r+s} \longrightarrow \BB$ given in
\eqref{def:flip between Ser_r+s and B_r,s}.  For a given
$(r+s)$-superdiagram $d \in \DD_{r+s}$, let $\ell$ (respectively,
$m$) be the number of the marked vertical edges on the left-hand
side (respectively, right-hand side)  of the wall in $\flip(d)$. Let
$x$ (respectively, $y$) be the number of the marked horizontal edges
on the top row (respectively, the bottom row) in $\flip(d)$.

We define
\begin{equation*}
\begin{aligned}
{\rm Flip} \ :  \ &  \DD_{r+s} &\longrightarrow & \ \ \ \ \ \ \ \ \ \ \BB \\
                  & \ \ d  & \longmapsto & \ (-1)^{u(d)+ (\ell+x)y}{\rm flip}(d),
\end{aligned}
\end{equation*}
where $u(d)$ is the number of crossings that are involved in  the
marked edges
 in $d$ such that the top vertices are between the $(r+1)$th vertex and
 $(r+s)$th vertex.

 \nc{\irh}{\ii^{RH}}
 \nc{\jrh}{\jj^{RH}}
 \nc{\irv}{\ii^{RV}}
 \nc{\jrv}{\jj^{RV}}
 \nc{\ilh}{\ii^{LH}}
 \nc{\jlh}{\jj^{LH}}
 \nc{\ilv}{\ii^{LV}}
 \nc{\jlv}{\jj^{LV}}

\begin{lemma} \label{lem:Lemma}
 {\rm The following diagram is commutative:
  \begin{equation} \label{diagram}
  \xymatrix
  {\ar @{} [dr] |{\circlearrowleft} {\DD_{r+s}} \ar[r]^{\Flip} \ar[d]^{\tPhi_{r+s} } & {\BB} \ar[d]^{\Psi_{r,s}}\\
{\End_{\C}(\Vrs)} \ar[r]^(0.45){\it flip} &{\End_{\C}(\mix)} }
\end{equation}
}
\end{lemma}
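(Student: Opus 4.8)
The plan is to verify the commutativity of~\eqref{diagram} by decomposing both paths along the decomposition of a superdiagram into its underlying (unmarked) diagram together with the marking operators $c_i$, using Lemma~\ref{lem:decomposition of a superdiagram} on the right and the analogous decomposition $d = c_{a_1}\cdots c_{a_p}\, d'\, c_{b_1}\cdots c_{b_q}$ in $\DD_{r+s}$ on the left (where $d'$ is obtained from $d$ by forgetting marks). First I would reduce to two cases: (1) $d = d'$ has no marked edges, and (2) $d = c_i$ is a single marking generator. For case~(1), the claim is exactly the statement that the unmarked part of $\flip$ intertwines the action of walled-Brauer diagrams on $\mix$ with the action of permutation diagrams on $\Vrs$, which is precisely \cite[Lemma 7.4]{BS}; since $\Flip$ and $\flip$ agree on unmarked diagrams (all the sign corrections $u(d)$, $\ell$, $x$, $y$ vanish), this case is immediate. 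For case~(2), I would compute $\Psi_{r,s}(\Flip(c_i))$ and $(\text{\it flip}\circ\tPhi_{r+s})(c_i) = \text{\it flip}(\Phi_{r+s}(\phi_{r+s}(c_i)))$ directly on a basis vector $v_{\iiL}\tensor w_{\iiR}$ and check they match, keeping careful track of the sign conventions.

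The crux is bookkeeping the signs. On the $\DD_{r+s}$ side, $c_i$ maps under $\phi_{r+s}$ to the Clifford generator $c_i$, whose action on $\Vrs$ is $w_1\tensor\cdots\tensor P(w_i)\tensor\cdots$ with the prefactor $(-1)^{|w_1|+\cdots+|w_{i-1}|}$; then $\text{\it flip}$ dualizes the last $s$ tensor slots via $f\mapsto f^*$, introducing a further sign from the definition $(f^*(\lambda))(m) = (-1)^{|f||\lambda|}\lambda(f(m))$ and from the identification~\eqref{eq:identification} when the odd operator $P$ (or its dual) sits in a slot past the wall. On the $\BB$ side, $\Psi_{r,s}(c_i)$ acts by the weight formula for the one-marked-edge diagram $c_i$: for $i\le r$ the marked vertical edge on the bottom-left contributes $m(e_1) = |i_1|+\cdots+|i_{a_1}|+1$ and the top marked vertical edge contributes $m(\widetilde e_1)=|j_1|+\cdots+|j_{b_1}|$, while for $i\ge r+1$ the exponents are shifted (the $|j_0|=0$ convention, and $m(\widetilde e_i)=|j_1|+\cdots+|j_{(b_i)-1}|$), and there are no crossings so the $\prod_c(-1)^{|c|}$ factor is trivial. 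I would match these against the $\Flip$-correction $(-1)^{u(c_i)+(\ell+x)y}$, noting $u(c_i)=0$, $x=y=0$, and $\ell\in\{0,1\}$ depending on whether $i\le r$, so the correction contributes at most a single sign that must absorb the discrepancy between the ``$+1$ in the exponent'' appearing in the $\BB$-weight and the dual-twist signs on the $\End_\C(\Vrs)$ side.

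The main obstacle I anticipate is precisely this last sign reconciliation: the two sides organize the $\Z_2$-gradings differently — the $\DD_{r+s}$/$\Vrs$ side tracks degrees of $w$'s (dual vectors) with the comultiplication-and-dualization sign rules, whereas the $\BB$/$\mix$ side tracks degrees of the labels $i_m$ directly through the combinatorial weight $\wtd$, and the ``$+1$'' terms in the definitions of $m(e_i)$ are exactly what the map $\Flip$ (as opposed to bare $\flip$) is designed to compensate. Concretely, one has to check the identity of the form $(-1)^{|i_1|+\cdots+|i_{i-1}|} = (-1)^{\ell}\cdot(-1)^{|i_1|+\cdots+|i_{a_1}|+1}\cdot(\text{sign from }P^*)$ on the right side of the wall, using $\overline{e_{i,j}w_k} = (-1)^{|j|}e_{i,j}w_{\overline k}$ from~\eqref{eq:equality for supercommute} to relate $P$ acting on $\W$ to the bar-involution; this is a finite but delicate computation. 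Once both special cases are established, the general case follows since $\Psi_{r,s}$, $\tPhi_{r+s}$ are known (by Lemma~\ref{lem:decomposition of a superdiagram} and Theorem~\ref{th:DD_{k}}) to be compatible with the respective decompositions $d = c_{a_1}\cdots c_{a_p} d' c_{b_1}\cdots c_{b_q}$, and $\Flip$ is multiplicative on these factors up to the scalar $(-1)^{u(d)+(\ell+x)y}$, which is built precisely to make the composite signs agree.
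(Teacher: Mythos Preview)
Your reduction strategy has a genuine gap in the last step. You propose to verify the diagram on the generators $c_i$ and on unmarked diagrams $d'$, and then deduce the general case from the product decomposition $d = c_{a_1}\cdots c_{a_p}\,d'\,c_{b_1}\cdots c_{b_q}$. The decompositions of $\tPhi_{r+s}(d)$ and of $\Psi_{r,s}(\flip(d))$ that you invoke are indeed available (Theorem~\ref{th:DD_{k}} and Lemma~\ref{lem:decomposition of a superdiagram}). What is \emph{not} available is any multiplicativity of the horizontal map: the linear isomorphism $\text{\it flip}\colon \End_{\C}(\Vrs)\to\End_{\C}(\mix)$, sending $f\tensor g\mapsto f\tensor g^*$, is only a $\qn$-module map, not an algebra map, since $(g_1g_2)^*=(-1)^{|g_1||g_2|}g_2^*g_1^*$. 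Hence knowing $\text{\it flip}(\tPhi(c_i))=\Psi(c_i)$ and $\text{\it flip}(\tPhi(d'))=\Psi(\flip(d'))$ separately does not let you conclude $\text{\it flip}(\tPhi(c_{a_1})\cdots\tPhi(d')\cdots)=\Psi(c_{a_1})\cdots\Psi(\flip(d'))\cdots$. Your final sentence (``$\Flip$ is multiplicative on these factors up to the scalar $(-1)^{u(d)+(\ell+x)y}$, which is built precisely to make the composite signs agree'') is exactly the assertion of the lemma, so invoking it here is circular. Note also that the multiplication on $\BB$ is not defined until Section~4, so ``$\Flip$ multiplicative'' is not even meaningful at this point; and the index sets appearing in the two decompositions do not match directly (an edge with top vertex $>r$ in $d$ acquires a different index after $\flip$), which is where the correction $u(d)$ enters.

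The paper's proof proceeds differently: it writes both $\text{\it flip}(\tPhi_{r+s}(d))$ and $\Psi_{r,s}(\Flip(d))$ explicitly as linear combinations of elementary tensors $e_{\jjL,\iiL}\tensor f_{\jjR,\iiR}$, and reduces the commutativity to a single sign identity between $\wt(_{\iiL\jjR}d_{\jjL\iiR})$ and $\wt(_{\ii}\widetilde{d}_{\jj})$. That identity is then verified by a direct (lengthy) parity computation, organized into three parts treating separately the contribution of $p(\iiR)+p(\jjR)$, the crossing terms, and the marked-edge terms. The decomposition into $\sigma$ and $c_i$'s is used inside this computation, but at the level of weights, not via any multiplicativity of $\text{\it flip}$. (A minor point: the unmarked case is \cite[Lemma~7.7]{BS}, not Lemma~7.4.)
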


\begin{proof}
  For a superdiagram without the marked edges, it was proved in
  \cite[Lemma 7.7]{BS}.

  For $\ii,\jj, \kk \in I^r$, we define $e_{\ii,\jj} \in \End_{\C}(\Vr)$
  by $e_{\ii,\jj}(v_{{\mathbf k}})=\delta_{\jj,\kk }v_{\ii}$ and for
  $\ii,\jj,\kk \in I^s$, we define $f_{\ii,\jj}\in \Endc(\Ws)$
  by $f_{\ii,\jj}(w_{\kk})=\delta_{\jj,\kk}w_{\ii}$.
  For $\ii \in I^t$, let
  $$|\ii|=|i_1| +\cdots + |i_t|, \ \ p(\ii)=\sum_{1 \leq a< b \leq t} |i_a||i_b| \ \ (\text{if   }t \geq 2),
  \ \ p(\ii)=0 \ \ (\text{if   }  t=1).$$
  As in the proof of \cite[Lemma 7.7]{BS}, we can check that
  \begin{align} \label{eq:dual element}
w_{\ii}(v_{\jj})=(-1)^{p(\ii)} \delta_{\ii,\jj}, \ \
  (e_{\ii, \jj})^*=(-1)^{(|\ii|+|\jj|)|\ii|+p(\ii)+p(\jj)} f_{\jj,\ii}.
  \end{align}
  From the identification \eqref{eq:identification}, we have
  \begin{align*}
  \tPhi_{r+s}(d)=\sum_{\ii,\jj \in I^{r+s}} (-1)^{(|\iiR|+|\jjR|)|\iiL|}\wt(\idj) e_{\jjL,\iiL} \tensor e_{\jjR,\iiR}
  \end{align*}
  and
  $$\Psi_{r,s}(\Flip(d))=(-1)^{u(d)+(\ell+x)y}
  \sum_{\ii,\jj \in I^{r+s}} (-1)^{(|\iiR|+|\jjR|)|\iiL|}
  \wt(_{\ii} {\widetilde d}_{\jj}) e_{\jjL,\iiL} \tensor f_{\jjR,\iiR} $$
   for $ d \in \DD_{r+s}$ and $\widetilde{d}:=\flip(d)$.
If we interchange the variables $\iiR$ and $\jjR$ and apply the
${\it flip}$ map, we have
 \begin{align*}
  {\it flip}(\tPhi_{r+s}(d))=\sum_{\ii,\jj \in I^{r+s}} (-1)^{(|\iiR|+|\jjR|)|\ii| +p(\iiR) +p(\jjR) }
  \wt(_{\iiL \jjR} d_{\jjL \iiR}) e_{\jjL,\iiL} \tensor f_{\jjR,\iiR}.
  \end{align*}
Therefore, it is enough to show that
\begin{align} \label{eq:flip and weight}
  (-1)^{(|\iiR|+|\jjR|)|\iiR| +p(\iiR) +p(\jjR) } \wt(_{\iiL \jjR} d_{\jjL \iiR})
  = (-1)^{u(d)+(\ell+x)y}  \wt(_{\ii} {\widetilde d}_{\jj}).
\end{align}
  Clearly, the labeled diagram $_{\iiL \jjR} d_{\jjL \iiR}$ is consistently labeled
  if and only if $_{\ii} {\widetilde d}_{\jj}$ is consistently labeled.
  So we may assume $\wt(_{\iiL \jjR} d_{\jjL \iiR})$ is not zero.

\vskip 3mm

Let
$${\beginpicture \setcoordinatesystem units <0.78cm,0.39cm>
\put{$c_i := $} at 0 0.4
\put{$\bullet$} at  1 -1  \put{$\bullet$} at  1 2
\put{$\bullet$} at  3 -1  \put{$\bullet$} at  3 2
\put{$\bullet$} at  4 -1  \put{$\bullet$} at  4 2
\put{$\bullet$} at  5 -1  \put{$\bullet$} at  5 2
\put{$\bullet$} at  7 -1  \put{$\bullet$} at  7 2
\put{$\cdots$} at 2 0.5
\put{$\cdots$} at 6 0.5
\put{\small{$1$}} at 1 3
\put{\small{$i-1$}} at 3 3
\put{\small{$i$}} at 4 3
\put{\small{$i+1$}} at 5 3
\put{\small{$r+s$}} at 7 3

\put {$\in \DD_{r+s}$ for $1 \leq i \leq r+s$.} at 10.5 0.8

\plot 1 2 1 -1 /
\plot 3 2 3 -1  /
\plot 4 2 4 -1 /
\plot 5 2 5 -1 /
\plot 7 2 7 -1 /
\arrow <3 pt> [1,2] from 4 -1 to  4 0.7
\endpicture}$$

\vskip 3mm

\noindent If we forget the marks on $d$, we get a permutation
$\sigma$. Note that
$$d=\sigma c_{b_1} \cdots c_{b_{\ell+x}} c_{a_1}
\cdots c_{a_{y+m}},$$ where $b_i$, $a_i$ are the numbers obtained by
reading the top vertices of marked edges in $d$ in order from left
to right. The condition on the marked edges of $\widetilde{d}$
implies
$$ 1 \leq b_1< \cdots <b_{\ell+x} \leq r, \ r+1 \leq a_1 <
\cdots < a_{y+m} \leq r+s.$$
 By the relations in $\DD_{r+s}$, we obtain
  \begin{equation} \begin{aligned} \label{eq:deformation of d}
 d &= (-1)^{(\ell+x)(y+m)}\sigma c_{a_1} \cdots c_{a_{y+m}} c_{b_1} \cdots c_{b_{\ell+x}} \\
  &=(-1)^{(\ell+x)(y+m)} c_{\sigma{(a_1)}} \cdots c_{\sigma{(a_{y+m})}} \sigma c_{b_1} \cdots c_{b_{\ell+x}}\\
  &= (-1)^{(\ell+x)(y+m)+u(d)} c_{a'_1} \cdots c_{a'_{y}} c_{a''_1} \cdots c_{a''_m}
   \sigma c_{b_1} \cdots c_{b_{\ell+x}},
   \end{aligned}\end{equation}
   where $1 \leq a'_1 < \cdots < a'_y \leq r < a''_1 < \cdots <a''_m \leq r+s$.
   We rearrange the indices of $c_{\sigma{(a_1)}} \cdots c_{\sigma{(a_{y+m})}}$
 in order from the smallest one to the biggest one (from left to right).

From Proposition \ref{prop:duality between D_k and qn} (1), we know
$\tPhi$ preserves the multiplication on $\DD_{r+s}$. Since all
elements $\sigma, c_j$ have only vertical edges and $_{\iiL \jjR}
d_{\jjL \iiR}$ is consistently labeled, we obtain
\begin{equation}  \label{eq:modified weight}
\begin{aligned}
 \wt(_{\iiL \jjR} d_{\jjL \iiR})  =  &(-1)^{(\ell+x)(y+m)+u(d)}
    \wt(_{\iiL \jjR} c_{a'_1}  {_{\kk^1}}) \cdots  \wt(_{\kk^{y+m-1}}c_{a''_{m}}  {_{\kk^{y+m}}}) \\
     & \times \wt(_{\kk^{y+m}}\sigma_{{\widetilde{\kk}}^1})
     \wt(  {_{{\widetilde{\kk}}^1}} c_{b_1} {_{{\widetilde{\kk}}^2}})
     \cdots\wt({_{{\widetilde{\kk}}^{\ell+x}}} c_{b_{\ell+x}}  {_{\jjL \iiR}} ),
 \end{aligned}
 \end{equation}
 where $\kk^j$ (respectively, $\widetilde{\kk}^j$) $\in I^{r+s}$ is determined by $\kk^{j-1}$
 (respectively, $\widetilde{\kk}^{j-1}$), $\kk^0=_{\iiL \jjR}$, and
$\widetilde{\kk}^{0}=\kk^{y+m}$. Therefore, to prove \eqref{eq:flip
and weight}, we need to calculate the right-hand side of
\eqref{eq:modified weight}.

\vskip1em
(I) First, we calculate $(|\iiR|+|\jjR|)|\iiR| +p(\iiR) +p(\jjR) $.

Consider the partition of the set $\{ r+1, \ldots, r+s \} = \{ p_1<
\cdots < p_{k_1} \} \sqcup \{q_1 < \cdots <q_{k_2} \}$ such that the
$p_i$-th vertices are the right vertices of horizontal edges on the
top row in $\widetilde{d}$, and $q_i$-th vertices are the top
vertices of vertical edges on the right-hand side of the wall in
$\widetilde{d}$. Set $\jrh:=(j_{p_1}, \cdots, j_{p_{k_1}}),
\jrv:=(j_{q_1}, \cdots, j_{q_{k_2}})$ for
$_{\ii}\widetilde{d}_{\jj}$. Similarly, we define $\irh$ and $\irv$
such that $\irh$ list the labels of the right vertices of all
horizontal edges on the bottom row and $\irv$ list the labels of the
bottom vertices of all vertical edges on the right side of the wall
for $_{\ii}\widetilde{d}_{\jj}$. Then we obtain
$$|\jjR|=|\jrh|+|\jrv|, \ \  p(\jjR)=p(\jrh)+|\jrh||\jrv|+p(\jrv), $$
$$|\iiR|=|\irh|+|\irv|, \ \ p(\iiR)=p(\irh)+|\irh||\irv|+p(\irv). $$

Since the $a''_k$-th vertex is the bottom vertex of the marked
vertical edge in $d$ for all $1 \leq k \leq m$, we note $\{ a''_1,
\ldots, a''_m \} \subseteq \{q_1 , \cdots , q_{k_2}\} $ and
$j_{a''_k}=\ol{i_{b''_k}}$, where $b''_k$-th vertex is the top
vertex of the marked edge connected with $a''_k$-th vertex on the
bottom in $d$. Thus  we obtain the following equalities in $\Z_2$:
\begin{align*} \label{eq:parity and weight}
|\irv|=|\jrv|+m, \ p(\irv)=p(\jrv)+m|\jrv|-(|j_{a''_1}|+ \cdots +|j_{a''_m}|) +  \binom{m}{2},
\end{align*}
where $\dbinom{a}{b}=\dfrac{a!}{(a-b)!b!}$ and let $\dbinom{0}{2}=\dbinom{1}{2}=0$.
From the above formulae, we obtain the following equality in $\Z_2$:
\begin{equation} \label{eq:x-1}
\begin{aligned}
&(|\iiR|+|\jjR|)|\iiR|  +p(\iiR) +p(\jjR)\\
&=|\irh|+p(\irh)+p(\jrh) +|\irh||\jrh| \\
& \ \ + m|\jrh|+m|\irh| -(|j_{a''_1}|+ \cdots +|j_{a''_m}| ) +\binom{m}{2}+m.
\end{aligned}
 \end{equation}

\vskip1em
(II) Secondly, we calculate the crossing part.

 An edge which does not cross the wall in $\td$ has the same crossings in $d$ and $\td$.
 Observe that there are only crossing parts in
$\wt(_{\kk^{y+m}}\sigma_{{\widetilde{\kk}}^1})$ and
$$\kk^{y+m}=(i_1, \ldots,\ol{i_{a'_1}}, \ldots , \ol{i_{a'_2}},
\ldots, \ol{j_{a''_m}}, \ldots, j_{r+s} )$$ in \eqref{eq:modified
weight}. For example, consider the three crossings of the edge with
labels $i_2$ and $j_3$ in the left diagram given below. We observe
$\prod_{c}(-1)^{|c|}$ associated with the edge with labels $i_2$
and $j_3$ is $(-1)^{|i_2||i_3| + |i_2|(|i_1|+1)+|i_2||j_4|}$. In
$\td$, we obtain $\prod_{c}(-1)^{|c|}$ associated with the edge
with labels $i_2$ and $j_3$ is $(-1)^{|i_2||i_3| +
|i_2||i_4|+|i_2||j_4|}$. Since $_{\iiL\jjR}d_{\jjL\iiR}$ is
consistently labeled, they are the same.

\begin{center}
${\beginpicture
\setcoordinatesystem units <0.78cm,0.39cm>
\put{$\bullet$} at  0 -1  \put{$\bullet$} at  0 2
\put{$\bullet$} at  2 -1  \put{$\bullet$} at  2 2
\put{$\bullet$} at  3 -1  \put{$\bullet$} at  3 2
\put{$\bullet$} at  4 -1  \put{$\bullet$} at  5 2
\plot 5 2 0 -1 /
\plot 3 2 2 -1 /
\plot 0 2 3 -1 /
\plot 2 2 4 -1 /
\arrow <3 pt> [1,2] from 3 -1 to  0.75 1.25
\arrow <3 pt> [1,2] from 0 -1 to  4 1.4
\arrow <3 pt> [1,2] from 4 -1 to  2.2 1.7
\arrow <3 pt> [1,2] from 2 -1 to  2.9 1.7

\setdashes  <.4mm,1mm>
\plot 3.5 -2   3.5 3 /
\setsolid
\setquadratic
\put{$i_1$} at 0 -2
\put{$i_2$} at 2 -2
\put{$i_3$} at 3 -2
\put{$j_4$} at 4 -2
\put{$j_1$} at 0 3
\put{$j_2$} at 2 3
\put{$j_3$} at 3 3
\put{$i_4$} at 5 3

\setdashes  <.4mm,.4mm>
\setquadratic
\plot 2.66  1.25  2.81  1  2.66 0.75 /
\plot 2.66  1.25  2.51  1  2.66 0.75 /

\plot 2.5  .75  2.65  0.5  2.5 0.25 /
\plot 2.5  .75  2.35  0.5  2.5 0.25 /

\plot 2.25  0  2.4  -0.25  2.25 -0.5 /
\plot 2.25  0  2.1  -0.25  2.25 -0.5 /
\setsolid
\endpicture}$
\hskip3em {\Large $\stackrel{{\rm flip}}{\longrightarrow}$} \hskip3em
${\beginpicture
\setcoordinatesystem units <0.78cm,0.39cm>
\put{$\bullet$} at  0 -1  \put{$\bullet$} at  0 2
\put{$\bullet$} at  2 -1  \put{$\bullet$} at  2 2
\put{$\bullet$} at  3 -1  \put{$\bullet$} at  3 2
\put{$\bullet$} at  5 -1  \put{$\bullet$} at  4 2
\plot 3 2 2 -1 /
\plot 0 2 3 -1 /

\arrow <3 pt> [1,2] from 3 -1 to  0.75 1.25
\arrow <3 pt> [1,2] from 3.25 1 to  3.2 1
\arrow <3 pt> [1,2] from 2 -1 to  2.9 1.7
\arrow <3 pt> [1,2] from 2.05 0.5 to  2 0.5

\setdashes  <.4mm,1mm>
\plot 3.5 -2   3.5 3 /
\setsolid
\setquadratic
\put{$i_1$} at 0 -2
\put{$i_2$} at 2 -2
\put{$i_3$} at 3 -2
\put{$i_4$} at 5 -2
\put{$j_1$} at 0 3
\put{$j_2$} at 2 3
\put{$j_3$} at 3 3
\put{$j_4$} at 4 3

\setdashes  <.4mm,.4mm>
\setquadratic
\plot 2.25  0  2.4  -0.25  2.25 -0.5 /
\plot 2.25   0  2.1  -0.25  2.25 -0.5 /

\plot 2.5  .7  2.65  0.45  2.5 0.2 /
\plot 2.5  .7  2.35  0.45  2.5 0.2 /

\plot 2.71  1.35  2.86  1.1  2.71 0.85 /
\plot 2.71  1.35  2.56  1.1  2.71 0.85 /

\setsolid
\setquadratic
\plot 0 -1  2.5 0.5 5 -1  /
\plot 2 2  3 1 4 2 /
\endpicture}$
\end{center}
Since we calculate crossing parts in
$\wt(_{\kk^{y+m}}\sigma_{{\widetilde{\kk}}^1})$ for the left diagram above, we must
multiply by $|i_1|+1$. When the edge which does not cross the wall
in $\td$ is in the right-hand side of the wall, the situation is the
same. Therefore there is no change in this part before and after
flipping.

Consider two edges in $d$ such that both edges cross the wall in
$\td$. They cross before flipping if and only if they do not cross
after flipping. For example, in the following case, the crossing of
the left diagram is $(-1)^{|j_4||j_6|+|j_5||j_6|}$, and the one of
the right diagram is $(-1)^{|j_4||j_5|}$, which differs by
$(-1)^{p(\jj^{RH})}$.
\begin{center}
${ \ \ \ \beginpicture
\setcoordinatesystem units <0.78cm,0.39cm>
\put{$\bullet$} at  5 -1
\put{$\bullet$} at  6 -1
\put{$\bullet$} at  7 -1
\put{$\bullet$} at  1 2
\put{$\bullet$} at  2 2
\put{$\bullet$} at  3 2

\plot 2 2 5 -1 /
\plot 3 2 6 -1 /
\plot 1 2 7 -1 /

\arrow <3 pt> [1,2] from 7 -1 to  2 1.5
\arrow <3 pt> [1,2] from 5 -1 to  4.2 -0.2
\arrow <3 pt> [1,2] from 6 -1 to  3.5 1.5

\setdashes  <.4mm,1mm>
\plot 4 -2   4 3 /
\setsolid
\setquadratic
\put{$j_1$} at 1 3
\put{$j_2$} at 2 3
\put{$j_3$} at 3 3
\put{$j_4$} at 5 -2
\put{$j_5$} at 6 -2
\put{$j_6$} at 7 -2

\setdashes  <.4mm,.4mm>
\endpicture}$
\hskip1em {\Large $\stackrel{{\rm flip}}{\longrightarrow}$}
{\beginpicture
\setcoordinatesystem units <0.78cm,0.39cm>
\setplotarea x from 0 to 8, y from -2 to 3
\put{$\bullet$} at  1 2
\put{$\bullet$} at  2 2
\put{$\bullet$} at  3 2
\put{$\bullet$} at  5 2
\put{$\bullet$} at  6 2
\put{$\bullet$} at  7 2

\setdashes  <.4mm,1mm>
\plot 4 -2   4 3 /
\setsolid
\setquadratic
\plot 1 2 4 0 7 2 /
\plot 2 2 3.5 1 5 2 /
\plot 3 2 4.5 1 6 2 /

\put{$j_1$} at 1 3
\put{$j_2$} at 2 3
\put{$j_3$} at 3 3
\put{$j_4$} at 5 3
\put{$j_5$} at 6 3
\put{$j_6$} at 7  3

\arrow <3 pt> [1,2] from 3.55 1 to  3.5 1
\arrow <3 pt> [1,2] from 4.55 1 to  4.5 1
\arrow <3 pt> [1,2] from 4.15 0 to  4.1 0
\endpicture} \end{center}

For the crossing of edges which become horizontal edges on the top
and the bottom row in $\td$,
 respectively, it differs by $(-1)^{|\ii^{RH}||\jj^{RH}|}$.\begin{center}
  ${\beginpicture
\setcoordinatesystem units <0.78cm,0.39cm>
\put{$\bullet$} at  1 -1  \put{$\bullet$} at  1 2
\put{$\bullet$} at  4 -1  \put{$\bullet$} at  4 2
\plot 4 2 1 -1 /
\plot 1 2 4 -1 /
\arrow <3 pt> [1,2] from 1 -1 to  3 1
\arrow <3 pt> [1,2] from 4 -1 to  1.5 1.5
\setdashes  <.4mm,1mm>
\plot 2 -2   2 3 /
\setsolid
\setquadratic
\put{$i_1$} at 1 -2
\put{$j_2$} at 4 -2
\put{$j_1$} at 1 3
\put{$i_2$} at 4 3
\endpicture}$
\hskip3em {\Large $\stackrel{{\rm flip}}{\longrightarrow}$} \hskip3em
${\beginpicture
\setcoordinatesystem units <0.78cm,0.39cm>
\put{$\bullet$} at  1 -1  \put{$\bullet$} at  1 2
\put{$\bullet$} at  4 -1  \put{$\bullet$} at  4 2

\setdashes  <.4mm,1mm>
\plot 2 -2   2 3 /
\setsolid
\setquadratic
\plot 1 -1 2.5 0 4 -1 /
\plot 1 2 2.5 1 4 2 /
\put{$i_1$} at 1 -2
\put{$i_2$} at 4 -2
\put{$j_1$} at 1 3
\put{$j_2$} at 4 3
\arrow <3 pt> [1,2] from 2.55 0 to  2.5 0
\arrow <3 pt> [1,2] from 2.55 1 to  2.5 1
\endpicture}$
\end{center}
The crossing part of the left diagram is
$(-1)^{(|i_1|+1)|j_2|}=(-1)^{|i_2||j_2|}$ because
$_{\iiL\jjR}d_{\jjL\iiR}$ is consistently labeled. So it differs by
$(-1)^{|\ii^{RH}||\jj^{RH}|}$.

Similarly, we can check that for the crossing of edges which become
horizontal edges on the bottom row in $\td$, it differs by
$(-1)^{p(\ii^{RH})}$. Therefore comparing the crossing part, we
obtain the equality
\begin{align}
\prod_{c}(-1)^{|c|}=\prod_{c'}(-1)^{|c'| + |\ii^{RH}||\jj^{RH}|+p(\ii^{RH})+p(\jj^{RH})}
\end{align}
for all crossings $c$  and $c'$ in the expression of
$\wt(_{\kk^{y+m}}\sigma_{{\widetilde{\kk}}^1})$ and
$\wt(_{\ii}\td_{\jj})$, respectively.

\vskip1em
(III) We calculate the part concerning with the marked edges.

In \eqref{eq:deformation of d}, the $b_i$-th vertex on the top row
in $d$
 is the top vertex of a vertical marked edge $\te$ on the left-hand side of the wall in $\td$ or
the left vertex of a horizontal marked edge $\te$ on the top row in
$\td$. We observe $ \wt(  {_{{\widetilde{\kk}}^1}} c_{b_1}
{_{{\widetilde{\kk}}^2}}) \cdots \wt({_{{\widetilde{\kk}}^{\ell+x}}}
c_{b_{\ell+x}}  {_{{\widetilde{\kk}}^{\ell+x+1}}} )$ is the same as
$\prod_{\te}(-1)^{m(\te)}$ in $\wt(_{\ii} \td_{\jj})$.
We  also observe that the $a'_i$-th
vertex on the bottom row is the left vertex of the horizontal marked
edge $e$ on the bottom row in $\td$ for all $1 \leq i \leq y$. Then
we have
 $\wt(_{\iiL \jjR} c_{a'_1}  {_{\kk^1}}) \cdots \wt(_{\kk^{y-1}}c_{a'_{y}}  {_{\kk^{y}}})$
is the same as $\prod_{e}(-1)^{m(e)}$ in $\wt(_{\ii} \td _{\jj})$.

Now we consider the marked edges which are the vertical edges on the
right-hand side of the wall in $\td$. Recall that their bottom
vertices in $d$ are $r+1 \leq a''_1 < \cdots < a''_m \leq r+s$.
Consider the partition of the set $\{1, \cdots, r \} = \{ s_{1}<
\cdots < s_{k_3}\} \sqcup \{t_1 < \cdots < t_{k_4} \}$ such that the
$s_i$-th vertices are the left vertices of the horizontal edges on
the bottom in $\td$, and the $t_i$-th vertices are the bottom
vertices of the vertical edges on the left-hand side of the wall in
$\td$. Let $\ilh:=(i_{s_1}, \cdots, i_{s_{k_3}}),\ilv:=(i_{t_1},
\cdots, i_{t_{k_4}})$. Similarly, we can define $\jlh,\jlv$ such
that $\jlh$ list the labels of the left vertices of all horizontal
edges on the top row and $\jlv$ list the labels of the top vertices
of all vertical edges on the left-hand side of the wall for
$_{\ii}\td_{\jj}$. Then we obtain the following equalities in
$\Z_2$:
\begin{align} \label{eq:parity}
|\ilv|=|\jlv|+\ell,  \ |\ilh|=|\irh|+y, \ |\irv|=|\jrv|+m, \ |\jrh|=|\jlh|+x .
\end{align}
By a direct calculation we can check that
\begin{align*}
\wt(_{\kk^{y}} c_{a''_1}  {_{\kk^{y+1}}})&= (-1)^{|\ii^{L}|+y+|j_{r+1}|+ \cdots + |j_{a''_1}|+1},\\
\wt(_{\kk^{y+1}} c_{a''_2}  {_{\kk^{y+2}}})
&= (-1)^{|\ii^{L}|+y+|j_{r+1}|+ \cdots + |j_{a''_1}|+1 +|j_{a''_1+1}| + \cdots + |j_{a''_2}|+1}.
\end{align*}
Consequently, we obtain
$$\wt(_{\kk^{y}} c_{a''_1}  {_{\kk^{y+1}}})
\cdots \wt(_{\kk^{y+m-1}}c_{a''_{m}}  {_{\kk^{y+m}}})
=(-1)^{(y+|\iiL|)m +  \sum_{k=1}^m \sum_{p=r+1}^{a''_k} |j_{p}| +
\binom{m+1}{2} }.$$ From the identity \eqref{eq:parity}, we see that
the following equality holds in $\Z_2$:
\begin{align} \label{eq:x-2}
  (y+|\iiL|)m =(y + |\ilv|+|\ilh|)m=\ell m + m|\jlv|+ m|\irh|.
\end{align}
Note that $\prod_{\te}(-1)^{m(\te)}=(-1)^{m |\jjL|+ \sum_{k=1}^m
\sum_{p=r+1}^{a''_k-1} |j_p|}$ for all marked vertical edges $\te$
on the right-hand side of the wall in $\td$.

Combining the equalities \eqref{eq:modified weight}--\eqref{eq:x-2}, we obtain the desired result
\eqref{eq:flip and weight}.
\end{proof}

We now prove the main result of this section.

\begin{theorem} \label{th:Psi is a bijection}
{\rm  The linear map $\Psi_{r,s}:\BB \longrightarrow
\End_{\qn}(\mix)$ is surjective. Moreover, when $n \geq r+s$,
  $\Psi_{r,s}$ is a bijection.}
\end{theorem}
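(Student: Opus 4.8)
The plan is to reduce this statement to the already-established $\V^{\otimes k}$ case (Proposition \ref{prop:duality between D_k and qn}) via the commutative square of Lemma \ref{lem:Lemma}. First I would observe that in diagram \eqref{diagram} the bottom arrow ${\it flip}:\End_\C(\Vrs)\to\End_\C(\mix)$ is a linear isomorphism (it is defined by $f\otimes g\mapsto f\otimes g^*$, and $g\mapsto g^*$ is bijective on $\End_\C(\V^{\otimes s})$ since $\V$ is finite dimensional), and the top arrow $\Flip:\DD_{r+s}\to\BB$ is a linear isomorphism, being $\flip$ up to an invertible scalar on each basis diagram and $\flip$ itself is a bijection on superdiagrams. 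Commutativity of \eqref{diagram} then gives $\Psi_{r,s}=\,{\it flip}\circ\tPhi_{r+s}\circ\Flip^{-1}$ as maps $\BB\to\End_\C(\mix)$.

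Next I would restrict this identity along the centralizers. By Proposition \ref{lem:supercommute between B_r,s and q(n)} the image of $\Psi_{r,s}$ lands in $\End_{\qn}(\mix)$, and by Proposition \ref{prop:duality between D_k and qn}(b) the image of $\tPhi_{r+s}$ is all of $\End_{\qn}(\Vrs)$. The key point is that the ${\it flip}$ isomorphism, being a $\qn$-supermodule isomorphism $\End_\C(\Vrs)\isoto\End_\C(\mix)$ (stated just before Lemma \ref{lem:Lemma}, via \cite[Lemma 7.4]{BS}), carries the subspace $\End_{\qn}(\Vrs)$ of $\qn$-invariants isomorphically onto $\End_{\qn}(\mix)$. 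Therefore $\Psi_{r,s}$ is the composite of the linear isomorphism $\Flip^{-1}:\BB\to\DD_{r+s}$, the surjection $\tPhi_{r+s}:\DD_{r+s}\twoheadrightarrow\End_{\qn}(\Vrs)$, and the isomorphism ${\it flip}:\End_{\qn}(\Vrs)\isoto\End_{\qn}(\mix)$; hence $\Psi_{r,s}:\BB\to\End_{\qn}(\mix)$ is surjective. This proves the first assertion with no hypothesis on $n$.

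For the second assertion, assume $n\ge r+s$. Then Proposition \ref{prop:duality between D_k and qn}(c) with $k=r+s$ says $\tPhi_{r+s}:\DD_{r+s}\to\End_{\qn}(\Vrs)$ is an isomorphism. Composing the three isomorphisms $\Flip^{-1}$, $\tPhi_{r+s}$, and ${\it flip}$ shows $\Psi_{r,s}$ is a bijection. Alternatively, and perhaps cleaner to write: since $\Psi_{r,s}$ is always surjective and $\dim_\C\BB=2^{r+s}(r+s)!=\dim_\C\DD_{r+s}=\dim_\C\End_{\qn}(\Vrs)=\dim_\C\End_{\qn}(\mix)$ when $n\ge r+s$ (the middle equality by Theorem \ref{th:Sergeev duality}(c) and $Ser_k\cong\DD_k$, the last by the ${\it flip}$ isomorphism), a surjection between equidimensional finite-dimensional spaces is a bijection.

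The main obstacle is really bookkeeping rather than conceptual: one must be sure that ${\it flip}$ genuinely restricts to an isomorphism on the $\qn$-invariant parts, i.e. that it is compatible with the $\qn$-actions on $\End_\C(\Vrs)$ and $\End_\C(\mix)$ in the graded sense, so that $f$ is killed by all $g\in\qn$ iff ${\it flip}(f)$ is. This is exactly the content of "${\it flip}$ is a $\qn$-supermodule isomorphism" asserted before Lemma \ref{lem:Lemma}, so once that is in hand the argument is immediate. The only other thing to double-check is that $\Flip$ (not just $\flip$) is bijective, which is clear since it differs from $\flip$ only by multiplication by the invertible scalar $(-1)^{u(d)+(\ell+x)y}$ on each basis element.
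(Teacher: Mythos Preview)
Your proposal is correct and follows essentially the same route as the paper's own proof: the paper also deduces the theorem from the commutativity of diagram~\eqref{diagram} in Lemma~\ref{lem:Lemma}, the fact that ${\it flip}$ is a $\qn$-supermodule isomorphism, Proposition~\ref{lem:supercommute between B_r,s and q(n)}, and Proposition~\ref{prop:duality between D_k and qn}. Your write-up simply makes explicit the steps (that $\Flip$ is bijective, that ${\it flip}$ restricts to an isomorphism on $\qn$-invariants, and the factorization $\Psi_{r,s}={\it flip}\circ\tPhi_{r+s}\circ\Flip^{-1}$) which the paper leaves to the reader.
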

\begin{proof}
  Since the map ${\it flip}: \Endc(\Vrs) \longrightarrow \Endc(\mix)$ is a $\qn$-supermodule isomorphism,
  our assertion follows from Proposition \ref{prop:duality between D_k and qn},
  Proposition \ref{lem:supercommute between B_r,s and q(n)},
   and Lemma \ref{lem:Lemma}.
\end{proof}

\vskip 5mm

\section{The mixed Schur-Weyl-Sergeev duality}

This section is devoted to the proof of our main result: the {\it
mixed Schur-Weyl-Sergeev duality}. We will define a multiplication
on $\BB$ and show that $\BB$ is isomorphic to the supercentralizer
algebra $\End_{\qn}(\mix)^{\op}$ as an associative superalgebra whenever
$n \ge r+s$.

As we did for $\DD_{k}$, we define a multiplication on $\BB$ in two
steps.

\vskip 3mm

{\bf Step 1:} Marked concatenation

\vskip 2mm

For $d_1, d_2 \in \BB$, we define the {\it marked concatenation}
$d_1* d_2$ as follows. We first put $d_1$ under $d_2$ and identify
the vertices on the bottom row of $d_2$ with the vertices on the top
row of $d_1$. If there is a loop in the middle row, we define $d_1
* d_2 =0$. If there is no loop in the middle row, we declare that
an edge in this diagram is marked if and
only if the number of marked edges from $d_1$ and $d_2$ to form this
edge is odd. The diagram thus obtained is the marked concatenation
$d_1 * d_2$.

\vskip 3mm

{\bf Step 2:} Multiplication on $\BB$.

\vskip 2mm

To define a multiplication, we first define the numbers $c(d_1,d_2),
\ell(d_1,d_2), \rho(d_1,d_2), p(d_1,d_2) \in \Z_{\geq 0}$ as
follows.

\begin{enumerate}
\item We say that a vertex is {\it good} if it is the left vertex of a
horizontal edge or the top vertex of a vertical edge. For a good
vertex of a marked edge in $d_1$ that becomes a horizontal edge in
the bottom row of $d_1 * d_2$, we color it with $\boxed{i}$ . On the
other hand, for a good vertex of a marked edge in $d_1$ that becomes
a horizontal edge in the top row of $d_1 * d_2$, we color it with
{\large \textcircled{\small{$i$}}}. A good vertex of a marked edge
in $d_1$ that becomes a vertical edge in $d_1 * d_2$ is colored with
{\large \textcircled{\small{$i$}}}. The numbering $i$ is determined
by reading the good vertices from bottom to top and left to right.
For the diagram $d_2$, we
carry out the same procedure succeeding the numbering obtained from
$d_1$. For each {\large \textcircled{\small{$i$}}}, we count the
number of $\boxed{j}$'s such that $j >i$. Let $c(d_1,d_2)$ be the
sum of these numbers for all {\large \textcircled{\small{$i$}}}.


\item Let $a_k$ be the good vertex in a new edge in $d_1 * d_2$
connected to a marked edge with color {\large
\textcircled{\small{$i_{k}$}}} such that $i_1 < \cdots < i_t$. Then
we obtain a sequence $a_1 \cdots a_t$. Let $\ell_1(d_1,d_2)$ be the
arranging number for $a_1 \cdots a_t$ defined in Section 2.

\item Let $\rho_1(d_1,d_2)$ be the number of
pairs with the same entry in $\{1, \ldots, r\}$ in the sequence
$a_1, \ldots, a_t$.

\item For each {\large \textcircled{\small{$i$}}}, we define the
{\it passing number} to be the number of ${\large
\textcircled{\small{$j$}}} $'s such that $j<i$ and {\large
\textcircled{\small{$i$}}} passes on {\large
\textcircled{\small{$j$}}} when {\large \textcircled{\small{$i$}}}
goes to a good vertex of a new edge. Let $p_1(d_1,d_2)$ be the sum
of passing numbers for all {\large \textcircled{\small{$i$}}}.


\item We carry out the same calculation of (3),(4) for the color $\boxed{i} $
to obtain the numbers $\ell_2(d_1,d_2)$ and $\rho_2(d_1,d_2)$.

\item The \emph{passing number} for $\boxed{i}$ is defined to be the number of $\boxed{j} $'s
such that $i < j$ and $\boxed{i}$ passes on $\boxed{j}$ when
$\boxed{i}$ goes to the good vertex of a new edge. Let
$p_2(d_1,d_2)$ be the sum of passing numbers for all $\boxed{i}$.

\item We define
\begin{align*}
&\ell(d_1,d_2):=\ell_1(d_1,d_2)+\ell_2(d_1,d_2), \ \ \ \rho(d_1,d_2):=\rho_1(d_1,d_2)+\rho_2(d_1,d_2),\\
& \text{ \ and\ }\ \  p(d_1,d_2):=p_1(d_1,d_2)+p_2(d_1,d_2).
\end{align*}
\end{enumerate}

With these data, we define the multiplication on $\BB$ by
\begin{equation}
d_1d_2=(-1)^{c(d_1,d_2)+\ell(d_1,d_2)+\rho(d_1,d_2)+p(d_1,d_2)}d_1
*d_2.
\end{equation}

Note that if the new edge in $d_1 *d_2$ consists only of the
vertical edges of $d_1$ and $d_2$, the passing number of {\large
\textcircled{\small{$i$}}} included in this new edge is always $0$.

\begin{example} \label{ex:example of the multiplication}
  If
  \begin{center}
  ${\beginpicture
\setcoordinatesystem units <0.78cm,0.39cm>
\setplotarea x from -1 to 6, y from -2 to 4
\put{$d_1 = $} at 0 1.5
\put{$\bullet$} at  1 0  \put{$\bullet$} at  1 3
\put{$\bullet$} at  2 0  \put{$\bullet$} at  2 3
\put{$\bullet$} at  3 0  \put{$\bullet$} at  3 3
\put{$\bullet$} at  4 0  \put{$\bullet$} at  4 3
\put{$\bullet$} at  5 0  \put{$\bullet$} at  5 3
\put{$\bullet$} at  6 0  \put{$\bullet$} at  6 3
\put{,} at 6.5 1

\plot 1 3 1 0 /
\plot 3 3 3 0 /
\plot 4 0 5 3 /
\plot 6 0 6 3 /
\arrow <3 pt> [1,2] from 1 0 to  1 1.7
\arrow <3 pt> [1,2] from 3.65 1 to 3.6 1
\arrow <3 pt> [1,2] from 2.75 2.05 to 2.7 2.05
\arrow <3 pt> [1,2] from 6 0 to 6 1.7
\setdashes  <.4mm,1mm>
\plot 3.5 -1   3.5 4 /
\setsolid
\setquadratic
\plot 2 3 3 2 4 3 /
\plot 2 0 3.5 1 5 0  /
\endpicture}$  $ \ \ \ {\beginpicture
\setcoordinatesystem units <0.78cm,0.39cm>
\put{$d_2 = $} at 0 1.5
\put{$\bullet$} at  1 0  \put{$\bullet$} at  1 3
\put{$\bullet$} at  2 0  \put{$\bullet$} at  2 3
\put{$\bullet$} at  3 0  \put{$\bullet$} at  3 3
\put{$\bullet$} at  4 0  \put{$\bullet$} at  4 3
\put{$\bullet$} at  5 0  \put{$\bullet$} at  5 3
\put{$\bullet$} at  6 0  \put{$\bullet$} at  6 3
\put{,} at 6.5 1
\plot 3 3 2 0 /
\plot 5 0 5 3 /
\arrow <3 pt> [1,2] from 3.65 0.45 to  3.6 0.45
\arrow <3 pt> [1,2] from 3.65 1 to 3.6 1
\arrow <3 pt> [1,2] from 3.65 2 to 3.6 2
\setdashes  <.4mm,1mm>
\plot 3.5 -1   3.5 4 /
\setsolid
\setquadratic
\plot 3 0 3.5 .5 4 0 /
\plot 1 0 3.5 1 6 0  /
\plot 1 3 3.5 2 6 3 /
\plot 2 3 3 2.5 4 3 /
\endpicture}$ \end{center}
  then the colored diagram is given as follows:
  $${\beginpicture
\setcoordinatesystem units <0.78cm,0.39cm>

\put{$\bullet$} at  1 0  \put{$\bullet$} at  1 3
\put{$\bullet$} at  2 0  \put{$\bullet$} at  2 3
\put{$\bullet$} at  3 0  \put{$\bullet$} at  3 3
\put{$\bullet$} at  4 0  \put{$\bullet$} at  4 3
\put{$\bullet$} at  5 0  \put{$\bullet$} at  5 3
\put{$\bullet$} at  6 0  \put{$\bullet$} at  6 3

\plot 3 3 2 0 /
\plot 5 0 5 3 /
\arrow <3 pt> [1,2] from 3.65 0.45 to  3.6 0.45
\arrow <3 pt> [1,2] from 3.65 1 to 3.6 1
\arrow <3 pt> [1,2] from 3.65 2 to 3.6 2
\setdashes  <.4mm,1mm>
\plot 3.5 -1   3.5 4 /
\setsolid

\put{$\bullet$} at  1 -3  \put{$\bullet$} at  1 -6
\put{$\bullet$} at  2 -3  \put{$\bullet$} at  2 -6
\put{$\bullet$} at  3 -3  \put{$\bullet$} at  3 -6
\put{$\bullet$} at  4 -3  \put{$\bullet$} at  4 -6
\put{$\bullet$} at  5 -3  \put{$\bullet$} at  5 -6
\put{$\bullet$} at  6 -3  \put{$\bullet$} at  6 -6
\plot 1 -3 1 -6 /
\plot 3 -3 3 -6 /
\plot 4 -6 5 -3 /
\plot 6 -6 6 -3 /
\arrow <3 pt> [1,2] from 1 -6 to  1 -4.3
\arrow <3 pt> [1,2] from 6 -6 to  6 -4.3
\arrow <3 pt> [1,2] from 3.65 -5 to 3.6 -5
\arrow <3 pt> [1,2] from 2.75 -3.95 to 2.7 -3.95
\setdashes  <.4mm,1mm>
\plot 3.5 -7   3.5 -2 /
\setsolid

\put{\tiny{$\boxed{1}$}} at 2 -7
\put{\tiny{$\boxed{2}$}} at 1 -2.2
\put{\tiny{$\boxed{4}$}} at 6 -2.2
\put{\tiny{$\boxed{5}$}} at 1 -0.8
\put{\textcircled{{\scriptsize $3$}}} at 2 -2.2
\put{\textcircled{{\scriptsize $6$}}} at 3 -0.8
\put{\textcircled{{\scriptsize $7$}}} at 1 4

\setquadratic
\plot 2 -3 3 -4 4 -3 /
\plot 2 -6 3.5 -5 5 -6  /

\plot 3 0 3.5 .5 4 0 /
\plot 1 0 3.5 1 6 0  /
\plot 1 3 3.5 2 6 3 /
\plot 2 3 3 2.5 4 3 /
\endpicture}$$

\vskip 3mm

\noindent We observe the following:

\vskip 3mm

\begin{enumerate}

\item The colored vertices  {\large \textcircled{\small{$3$}}},
$\boxed{4}$ and $\boxed{5}$ yield $c(d_1,d_2)=2$.

\item The good vertices of the edges that are connected to {\large
\textcircled{\small{$3$}}}, {\large \textcircled{\small{$6$}}},
{\large \textcircled{\small{$7$}}} are $3,3,1$, respectively. So
$\ell_1(d_1,d_2)=2, \rho_1(d_1,d_2)=1$.

\item  The colored vertices {\large \textcircled{\small{$3$}}} and
{\large \textcircled{\small{$6$}}} yield $p_1(d_1,d_2)=1$.

\item  The good vertices of the edges that are connected to
$\boxed{1}, \boxed{2}, \boxed{4}, \boxed{5}$ are $2,1,1,1$,
respectively. So $\ell_2(d_1,d_2)=3,\rho_2(d_1,d_2)=1$.

\item The colored vertices $\boxed{4}$ and $\boxed{5}$ yield $p_2(d_1,d_2)=1$.

\end{enumerate}
\vskip 3mm

Consequently, we obtain

\vskip 3mm

$$
  {\beginpicture
\setcoordinatesystem units <0.78cm,0.39cm>
\setplotarea x from -1 to 6.5, y from -2.3 to 2.5
\put{$d_1 d_2= - $} at -0.4 .2

\put{$\bullet$} at  1 -1  \put{$\bullet$} at  1 2
\put{$\bullet$} at  2 -1  \put{$\bullet$} at  2 2
\put{$\bullet$} at  3 -1  \put{$\bullet$} at  3 2
\put{$\bullet$} at  4 -1  \put{$\bullet$} at  4 2
\put{$\bullet$} at  5 -1  \put{$\bullet$} at  5 2
\put{$\bullet$} at  6 -1  \put{$\bullet$} at  6 2
\put{.} at 6.5 0.2
\plot 3 2 3 -1 /
\plot 5 2 4 -1 /

\arrow <3 pt> [1,2] from 3.65 -0.5 to  3.6 -0.5
\arrow <3 pt> [1,2] from 2.65 -0.1 to  2.6  -0.1
\arrow <3 pt> [1,2] from 3.65 1 to 3.6 1
\setdashes  <.4mm,1mm>
\plot 3.5 -2   3.5 3 /
\setsolid
\setquadratic
\plot 2 -1 3.5 -0.5 5 -1 /
\plot 1 -1 3.5 0  6 -1  /
\plot 1 2 3.5 1  6 2  /
\plot 2 2 3 1.5  4 2 /
\endpicture}
$$
\end{example}

From the definition of marked concatenation, we observe that $\BB$ is a superalgebra (which may
not be associative at this point). We call $\BB$ the
\emph{$(r,s)$-walled Brauer superalgebra}, or simply the
\emph{walled Brauer superalgebra}. The identity element is the
diagram such that each vertex on the top row is connected with the
corresponding vertex on the bottom row by the normal edge.

Note that the multiplication of $(r,s)$-superdiagrams without marked edges is the same as the
multiplication on the walled Brauer algebra $B_{r,s}(0)$.
Therefore, the even part of $\BB$ contains the walled Brauer algebra $B_{r,s}(0)$ as a subalgebra.

Now we will show that the linear map $\Psi_{r,s}$ gives a
well-defined (right) action of $\BB$ on $\mix$. For $1 \leq p \leq
r$, $r+1 \leq q \leq r+s$, we define $e_{p,q}$ to be the following
$(r,s)$-superdiagram:
$${\beginpicture
\setcoordinatesystem units <0.78cm,0.39cm>
\put{$e_{p,q} := $} at -0.5 1.5
\put{$\bullet$} at  1 0  \put{$\bullet$} at  1 3
\put{$\bullet$} at  2 0  \put{$\bullet$} at  2 3
\put{$\bullet$} at  3 0  \put{$\bullet$} at  3 3
\put{$\bullet$} at  4 0  \put{$\bullet$} at  4 3
\put{$\bullet$} at  5 0  \put{$\bullet$} at  5 3
\put{$\bullet$} at  6 0  \put{$\bullet$} at  6 3
\put{$\bullet$} at  7 0  \put{$\bullet$} at  7 3
\put{$\bullet$} at  8 0  \put{$\bullet$} at  8 3
\put{$\bullet$} at  9 0  \put{$\bullet$} at  9 3
\put{$\bullet$} at  10 0  \put{$\bullet$} at  10 3

\put{$\cdots$} at 1.5 1.5
\put{$\cdots$} at 4.5 1.5
\put{$\cdots$} at 6.5 1.5
\put{$\cdots$} at 9.5 1.5

\put{{\scriptsize $p$}} at 3 4
\put{{\scriptsize $q$}} at 8 4

\plot 1 0 1 3 /
\plot 2 0 2 3 /
\plot 4 0 4 3 /
\plot 5 0 5 3 /
\plot 6 0 6 3 /
\plot 7 0 7 3 /
\plot 9 0 9 3 /
\plot 10 0 10 3 /

\setdashes  <.4mm,1mm>
\plot 5.5 -0.5   5.5 3.5 /
\setsolid
\setquadratic
\plot 3 3 5.5 2 8 3 /
\plot 3 0 5.5 1 8 0  /
\endpicture}.$$
\vskip0.5em

We use the same definition $c_i$ given in \eqref{def:a superdiagram
c_i-1}, \eqref{def:a superdiagram c_i-2}. A diagram in $\BB$ with normal vertical edges only will
usually be denoted by $\sigma$ and will be identified with an
element in $\Sigma_r \times \Sigma_s$. For simplicity, we write
$\Psi_{r,s}(\sigma)=\widetilde{\sigma}$,
$\Psi_{r,s}(e_{p,q})=\widetilde{e}_{p,q}$ and
$\Psi_{r,s}(c_i)=\widetilde{c}_i$.

\nc{\tepq}{\widetilde{e}_{p,q}}
\nc{\ts}{\widetilde{s}}
\nc{\tsigma}{\widetilde{\sigma}}
\nc{\tc}{\widetilde{c}}
\nc{\tle}{\widetilde{e}}
\nc{\cent}{\End_{\qn}(\mix)}

\begin{lemma} \label{lem:relations in the centralizer}
{\rm  With the above notations, the following relations hold in
$\End_{\qn}(\mix)^{\rm
 op}$.
  \begin{equation}
    \begin{aligned}
 &\tsigma \tc_j=\tc_{\sigma (j)} \tsigma \text{  for  } \sigma \in \Sigma_r \times \Sigma_s,    \\
&\tepq^{\ 2}=0, \ \ \  \tepq \tc_p \tepq =0,\\
&\tc_i^{\ 2}=-1 \text{  for   } 1 \leq i \leq r, \ \ \  \tc_i^{\ 2}=1  \text{  for   } r+1 \leq i \leq r+s, \\
& \tc_i\tc_j=-\tc_j\tc_i \text{  for  } i \neq j, \\
&\tc_p \tepq =\tc_q \tepq, \  \tepq \tc_p=\tepq \tc_q, \  \tepq \tc_{p'}=\tc_{p'} \tepq \text{  for different  } p,q,p'.
    \end{aligned}
  \end{equation} }
\end{lemma}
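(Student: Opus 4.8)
To prove the lemma, the plan is to verify each identity by a direct computation on the basis $\{v_{\iiL}\tensor w_{\iiR}\mid\iiL\in I^r,\ \iiR\in I^s\}$ of $\mix$, using the explicit descriptions of the operators obtained from the weight formula \eqref{def:action of B_r,s}. First I would record these descriptions: for $1\le j\le r$ the operator $\tc_j$ acts on $\Vr$ by applying $P$ to the $j$-th tensorand, with Koszul sign $(-1)^{|i_1|+\cdots+|i_{j-1}|}$, so on $\Vr$ it is precisely the Sergeev operator $\Phi_r(c_j)$ (and it is the identity on $\Ws$); for $r+1\le j\le r+s$ the operator $\tc_j$ acts on $\Ws$ by applying the bar involution $\overline{w_k}=w_{\ol k}$ to the $(j-r)$-th tensorand, with Koszul sign $(-1)^{|i_1|+\cdots+|i_{j-1}|}$; the operator $\tsigma$ for $\sigma\in\Sigma_r\times\Sigma_s$ is the blockwise graded place permutation; and $\tepq$ sends $v_{\iiL}\tensor w_{\iiR}$ to $\delta_{i_p,i_q}$ times an explicit sign times $\sum_{t\in I}(\text{a sign depending on }t)\,(v_{i_1}\tensor\cdots\tensor v_t\tensor\cdots)\tensor(\cdots\tensor w_t\tensor\cdots)$ with $v_t$ in slot $p$ and $w_t$ in slot $q-r$, i.e.\ it evaluates the $p$-th $\V$-slot against the $(q-r)$-th $\W$-slot and re-inserts the canonical copairing $\sum_t v_t\tensor w_t$ (with signs) into those two slots.

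For the relations not involving $\tepq$ I would distinguish three cases. If all indices occurring are $\le r$, the operators act only on $\Vr$ (as the identity on $\Ws$) and there agree with the images under $\Phi_r$ of the generators $c_j,s_i$ of $Ser_r$; hence $\tsigma\tc_j=\tc_{\sigma(j)}\tsigma$, $\tc_i^{\,2}=-1$ and $\tc_i\tc_j=-\tc_j\tc_i$ follow from the defining relations of $Ser_r$ together with Theorem~\ref{th:Sergeev duality}. If all indices are $\ge r+1$, the same computation runs on $\Ws$ with $P$ replaced by the bar involution; the only difference is that the bar involution squares to $+\id$ (whereas $P^{2}=-\id$), which is exactly the source of the sign flip $\tc_i^{\,2}=+1$ for $i\ge r+1$ --- one may also see it directly from the weight formula, which attaches to a marked vertical edge with top vertex $b>r$ the datum $|j_{b-1}|$ rather than $|j_b|$. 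If one index is $\le r$ and the other is $\ge r+1$, the two operators act on disjoint tensor slots, but the Koszul sign of the ``$\ge r+1$'' operator involves the parities of \emph{all} the $\Vr$-slots, so applying the $\V$-side operator first shifts that Koszul sign by $1$; a one-line parity count then gives $\tc_i\tc_j=-\tc_j\tc_i$, while $\tsigma$, being even and block-preserving, commutes across the wall so that $\tsigma\tc_j=\tc_{\sigma(j)}\tsigma$ is trivial there.

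The relations involving $\tepq$ I would check straight from the formula for $\tepq$. For $\tepq^{\,2}=0$: applying $\tepq$ once more evaluates the inserted $v_t$ in slot $p$ against the inserted $w_t$ in slot $q-r$, and after collecting the copairing sign, the horizontal-edge sign and the two Koszul contributions, the inner sum factors as $(\sum_{t\in I}(-1)^{|t|})$ times the rest, which vanishes because $\sum_{t\in I}(-1)^{|t|}=n-n=0$. For $\tepq\tc_p\tepq=0$: the middle $\tc_p$ replaces the inserted $v_t$ by $P(v_t)=(-1)^{|t|+1}v_{\ol t}$, and the final $\tepq$ then evaluates $w_t$ on $v_{\ol t}$, giving the factor $\delta_{\ol t,t}=0$. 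The sliding relations $\tc_p\tepq=\tc_q\tepq$ and $\tepq\tc_p=\tepq\tc_q$ say that $P$ acting on the $\V$-leg of the (co)pairing agrees, up to sign, with the bar involution acting on the $\W$-leg; this reduces to the identity $\langle \overline{w},v\rangle=(-1)^{|w|}\langle w,Pv\rangle$ (equivalently $\overline{(\,\cdot\,)}=P^{*}$), once the matching Koszul signs and the parity change of the affected slot have been tracked. Finally $\tepq\tc_{p'}=\tc_{p'}\tepq$ for $p,q,p'$ pairwise distinct holds because $\tepq$ is even and touches only slots $p$ and $q-r$ whereas $\tc_{p'}$ touches slot $p'$; one checks that the Koszul signs are unchanged, using that $\tepq$ alters the parities of slots $p$ and $q-r$ by the \emph{same} amount.

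The hard part will be the sign bookkeeping, which is concentrated in two spots: making the asymmetry $\tc_i^{\,2}=-1$ for $i\le r$ versus $\tc_i^{\,2}=+1$ for $i\ge r+1$ come out with the correct sign, and the Koszul-sign matching in the sliding relations $\tc_p\tepq=\tc_q\tepq$, where one must simultaneously account for the sign carried by the copairing as its index ranges over $I$, for the parity changes of slots $p$ and $q-r$, and for the crossings of the horizontal edges of $e_{p,q}$ with the intervening vertical edges. Everything else is routine.
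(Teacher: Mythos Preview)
Your proposal is correct and follows essentially the same approach as the paper: direct computation on the basis $\{v_{\iiL}\tensor w_{\iiR}\}$ using the explicit weight formula. The paper carries out in full only the first relation in the case $r+1\le j\le r+s$ and declares the rest to be analogous direct calculations; your outline fills in the remaining cases with the same kind of bookkeeping, the only organizational difference being that for indices $\le r$ you invoke the already-established Sergeev relations via $\Phi_r$ rather than redoing them from scratch.
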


\begin{proof}
Each of the relations can be checked by direct calculations. For
instance, we show the first relation for the case $r+1 \leq j \leq
r+s$. Let $\iiL \in I^r, \iiR \in I^s $. We have
  \begin{equation*}
  \begin{aligned}
  \tc_j \tsigma(v_{\iiL} \tensor w_{\iiR})
= &\tc_j
  ( (-1)^{x+y} v_{i_{\sigma(1)}} \tensor \cdots \tensor v_{i_{\sigma(r)}}
  \tensor w_{i_{\sigma(r+1)}} \tensor \cdots \tensor w_{i_{\sigma(r+s)}})\\
  = &(-1)^{x+y + |\iiL| +|i_{\sigma(r+1)}| + \cdots + |i_{\sigma(j-1)}|}
  v_{i_{\sigma(1)}} \tensor \cdots \tensor v_{i_{\sigma(r)}} \\
  &  \hskip 5mm  \tensor w_{i_{\sigma(r+1)}} \tensor \cdots \tensor
w_{\ol{i_{\sigma (j)}}}
  \tensor \cdots \tensor w_{i_{\sigma(r+s)}},
\end{aligned}
\end{equation*}
where $x$ and $y$ arise from  crossings of the left-hand side and
right-hand side of the wall in $\sigma$, respectively.

Let $\sigma(j)=k$. Then
\begin{align*}
   \tsigma \tc_k (v_{\iiL} \tensor w_{\iiR})
   =&\tsigma ((-1)^{|\iiL| +|i_{r+1}| + \cdots +|i_{k-1}|}
   \ v_{i_{1}} \tensor \cdots \tensor v_{i_{r}} \\
 &   \hskip 5mm \tensor w_{i_{r+1}} \tensor \cdots \tensor w_{\ol{i_{k}}} \tensor \cdots \tensor w_{i_{r+s}}),\\
  = &(-1)^{|\iiL| +|i_{r+1}| + \cdots +|i_{k-1}|+x+y'}
  v_{i_{\sigma(1)}} \tensor \cdots \tensor v_{i_{\sigma(r)}} \\
& \hskip 5mm  \tensor w_{i_{\sigma(r+1)}} \tensor \cdots \tensor
w_{\ol{i_{\sigma(j)}}} \tensor \cdots \tensor w_{i_{\sigma(r+s)}},
\end{align*}
  where $x$ and $y'$ arise from crossings  of the left-hand side  and right-hand side of the wall in $\sigma$, respectively.
One can verify that $$(-1)^{y +|i_{\sigma(r+1)}| + \cdots
+|i_{\sigma(j-1)}|} =(-1)^{y' +|i_{r+1}| + \cdots +|i_{k-1}|}.$$
There are three types of the edges of $\sigma$ as shown below:

$${\beginpicture
\setcoordinatesystem units <0.78cm,0.39cm>
\setplotarea x from 0 to 7, y from -5 to 6
\put{$\bullet$} at  2 -1  \put{$\bullet$} at  2 2
\put{$\bullet$} at  3 -1  \put{$\bullet$} at  3 2
\put{$\bullet$} at  4 -1  \put{$\bullet$} at  4 2
\put{$\bullet$} at  6 -1  \put{$\bullet$} at  6 2
\put{$i_a$} at 2 -2
\put{$i_b$} at 3 -2
\put{$i_k$} at 4 -2
\put{$i_c$} at 6 -2
\put{$\sigma(j)$-th vertex} at 4 -4
\arrow <3 pt> [1,2] from 4 -3.5 to  4 -2.7
\put{$j$th vertex} at 4 5
\arrow <3 pt> [1,2] from 4 4.5 to  4 3.7

\put{$i_a$} at 2 3
\put{$i_b$} at 6 3
\put{$i_k$} at 4 3
\put{$i_c$} at 3 3

\plot 2 2 2 -1 /
\plot 3 -1 6 2 /
\plot 4 -1 4 2 /
\plot 6 -1 3 2 /

\setdashes  <.4mm,1mm>
\plot 1 -2   1 3 /
\setsolid

\setdashes  <.4mm,.4mm>
\setquadratic
\plot 4 0.3 4.15 0 4 -0.3 /
\plot 4 0.3 3.85 0 4 -0.3 /
\plot 4 1.3 4.15 1 4 0.7 /
\plot 4 1.3 3.85 1 4 0.7 /
\setsolid
\endpicture}$$

We have $y=|i_k||i_b|+|i_k||i_c|+|i_b||i_c|$ and $|i_{\sigma(r+1)}|
+ \cdots |i_{\sigma(j-1)}|=|i_a|+|i_c|$. Since
$y'=(|i_k|+1)|i_b|+(|i_k|+1)|i_c| + |i_b| |i_c|$ and $|i_{r+1}|
+\cdots +|i_{k-1}|=|i_a|+|i_b|$, we obtain the desired result.
\end{proof}

The following proposition is one of the key ingredients in proving
our main theorem.

\begin{prop} \label{prop:Psi is an algebra homomorphism}

{\rm The linear map $\Psi_{r,s}$ defines a superalgebra homomorphism
between $\BB$ and $\End_{\qn}(\mix)^{\op}$. That is, for all $d_1,
d_2 \in \BB$, we have
$$\Psi_{r,s}(d_1 d_2) = \Psi_{r,s}(d_1) \Psi_{r,s}(d_2) \quad
\text{in} \ \ \End_{\qn}(\mix)^{\op}. $$}
\end{prop}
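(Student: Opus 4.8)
The plan is to mirror the proof of Theorem \ref{th:DD_{k}}, the difference being that the single Clifford block of $Ser_k$ is now replaced by the two families $\tc_i$ with $\tc_i^{\,2}=-1$ for $1\le i\le r$ and $\tc_i^{\,2}=+1$ for $r+1\le i\le r+s$, together with the ``cup--cap'' generators $\tepq$, and that all signs must be matched against the four statistics $c,\ell,\rho,p$ entering the definition of the product on $\BB$. Since $\Psi_{r,s}$ is already known to be even, only multiplicativity is at issue. The first step is a reduction to generators: given $d\in\BB$ with bottom marked-edge sequence $a_1<\cdots<a_p$ and top marked-edge sequence $b_1<\cdots<b_q$ and unmarked shadow $d'$, Lemma \ref{lem:decomposition of a superdiagram} gives $\Psi_{r,s}(d)=\tc_{a_1}\cdots\tc_{a_p}\,\Psi_{r,s}(d')\,\tc_{b_1}\cdots\tc_{b_q}$ in $\End_{\qn}(\mix)^{\op}$; and, since the product of unmarked $(r,s)$-superdiagrams coincides with the product in the walled Brauer algebra $B_{r,s}(0)$ and $\Psi_{r,s}$ restricted to $B_{r,s}(0)$ is an algebra homomorphism by \cite[Section 7]{BS}, each $\Psi_{r,s}(d')$ further factors as a word in the $\tsigma$ ($\sigma\in\Sigma_r\times\Sigma_s$) and the $\tepq$.

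Substituting these decompositions into $\Psi_{r,s}(d_1)\Psi_{r,s}(d_2)$ produces a single long word in the $\tc$'s, $\tsigma$'s and $\tepq$'s, with the top $\tc$'s of $d_1$ adjacent to the bottom $\tc$'s of $d_2$. Using the relations of Lemma \ref{lem:relations in the centralizer} --- $\tsigma\tc_j=\tc_{\sigma(j)}\tsigma$, $\tepq\tc_{p'}=\tc_{p'}\tepq$ for $p'\neq p,q$, $\tc_p\tepq=\tc_q\tepq$, $\tepq\tc_p=\tepq\tc_q$ --- I transport each $\tc$ to the good-vertex position it occupies in the concatenated diagram $d_1*d_2$; the move $\tc_p\tepq=\tc_q\tepq$ is precisely a marked point travelling along a horizontal cup or cap, and it is this transport that records the passing numbers $p_1(d_1,d_2),p_2(d_1,d_2)$ and the colour-crossing number $c(d_1,d_2)$ (the latter being generated exactly by the interaction between a marker that ends on a vertical or top-row edge and one that ends on a bottom-row horizontal edge, i.e.\ the circled-versus-boxed distinction). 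If at any stage a subword $\tepq^{\,2}$ or $\tepq\tc_p\tepq$ appears it vanishes by Lemma \ref{lem:relations in the centralizer}; one checks that this occurs exactly when $d_1*d_2$ has a loop in the middle row, in which case $d_1 d_2=0$ by definition and both sides are zero. When there is no loop, the surviving $\tc$'s are sorted into the canonical increasing order within each of the two families, the transpositions $\tc_i\tc_j=-\tc_j\tc_i$ contributing $(-1)^{\ell(d_1,d_2)}$ and the cancellations $\tc_i^{\,2}=-1$ for $i\le r$ contributing $(-1)^{\rho(d_1,d_2)}$ (cancellations $\tc_i^{\,2}=+1$ for $i>r$ giving no sign, consistently with $\rho$ counting only coincidences in $\{1,\dots,r\}$). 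Since the unmarked shadow of $d_1*d_2$ is the product $d_1'd_2'$ in $B_{r,s}(0)$ and $\Psi_{r,s}(d_1'd_2')=\Psi_{r,s}(d_1')\Psi_{r,s}(d_2')$, collecting signs yields
\[
\Psi_{r,s}(d_1)\Psi_{r,s}(d_2)=(-1)^{c(d_1,d_2)+\ell(d_1,d_2)+\rho(d_1,d_2)+p(d_1,d_2)}\,\Psi_{r,s}(d_1*d_2),
\]
which equals $\Psi_{r,s}(d_1 d_2)$ by the definition of the multiplication on $\BB$. That both sides lie in $\End_{\qn}(\mix)^{\op}$ is Theorem \ref{th:Psi is a bijection}.

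The hard part will be the sign bookkeeping: one must verify that the combinatorial statistics $c,\ell,\rho,p$, defined through the colouring of $d_1*d_2$, are reproduced term-for-term by the relations of Lemma \ref{lem:relations in the centralizer}. The genuinely new contribution, with no analogue in the Sergeev case of Theorem \ref{th:DD_{k}}, is the passing numbers $p_1,p_2$, which appear only because in the presence of the wall a marked point must be slid along the horizontal cup--cap edges; the colour-crossing number $c$ requires equally careful attention. A secondary subtlety is the precise matching of the vanishing relations $\tepq^{\,2}=0$ and $\tepq\tc_p\tepq=0$ with the occurrence of middle-row loops, and it is cleanest to dispose of the loop case at the outset and then assume $d_1*d_2\neq 0$ throughout the sign computation.
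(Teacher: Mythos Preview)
Your plan is essentially the paper's proof: decompose each diagram via Lemma~\ref{lem:decomposition of a superdiagram}, invoke \cite[Lemma~7.4]{BS} for unmarked diagrams, then transport the $\tc_i$'s through the word using the relations of Lemma~\ref{lem:relations in the centralizer} and match the resulting signs against $c,\ell,\rho,p$; the paper even uses the same circled/boxed split and the same ``passing number from $\tc_p\tepq=\tc_q\tepq$'' observation. One small correction on the loop case: for a loop of length greater than two you will not see a literal subword $\tepq^{\,2}$ or $\tepq\tc_p\tepq$; instead (as the paper does) you should first use $\tc_p\tepq=\tc_q\tepq$ to cancel the even number of $\tc$'s around the loop, and then the remaining product of $\te$'s vanishes because it equals $\Psi_{r,s}$ applied to the corresponding product of unmarked $e$'s, which is $0$ in $B_{r,s}(0)$.
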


\nc\Psirs{\Psi_{r,s}}
\begin{proof}
By \cite[Lemma 7.4]{BS}, the map $\Psi_{r,s}$ preserves the
multiplication for $(r,s)$-superdiagrams without marked edges.

First, we assume that there is a loop in the middle row of $d_1
*d_2$. If the number of marked edges to form a loop is odd, there is
no $\jj$ such that $_{\ii} {d_1}_{\jj}, \ _{\jj} {d_2}_{\kk}$ are
all consistently labeled. For any starting point of the loop, we
label $j$ and $\ol j$ simultaneously since there is an odd number of
the horizontal edges. Therefore, $\sum_{\jj \in I^{r+s}} \wt(_{\ii}
{d_1}_{\jj}) \wt(_{\jj} {d_2}_{\kk})=0$ for all $\kk \in I^{r+s}$,
and hence $\Psirs(d_1)\Psirs(d_2)=0$.

Suppose  the number of marked edges to form a loop is even. Simply,
we write $\widetilde{x}=\Psirs(x)$ for $x \in \BB$.
 Let $\tc_A:=\tc_{a_1} \cdots \tc_{a_m}$ for a non-empty subset
$A=\{a_1< \cdots< a_m \} \subset \{1, \cdots, r+s \}$, and
$\tc_A:=1$ for an empty set $A$. Using the decomposition in Lemma
\ref{lem:decomposition of a superdiagram}, we obtain
 $$\widetilde{d}_1=\widetilde{c}_{a_1} \cdots \widetilde{c}_{a_p}
 \widetilde{d_1'} \widetilde{c}_{b_1} \cdots \widetilde{c}_{b_q}, \ \ \
 \widetilde{d}_2=\widetilde{c}_{a'_1} \cdots \widetilde{c}_{a'_{t}}
 \widetilde{d_2'} \widetilde{c}_{b'_1} \cdots \widetilde{c}_{b'_{u}}.$$

Since the proof for the general case is too messy, we will work with
the following example to explain the main idea of proof.  Let us
draw only a loop in the middle row in $d_1 *d_2$ and let the
vertices in the loop be $x_1, x_2,x_3, y_1, y_2, y_3$ from left to
right.
$${\beginpicture
\setcoordinatesystem units <0.78cm,0.39cm>
\setplotarea x from -2.5 to 8.5, y from -4.5 to 4.5
\put{$d_1 *d_2 = $} at -1.7 0
\put{$\bullet$} at  1 0  \put{$\bullet$} at  1 -0.5
\put{$\bullet$} at  2 0  \put{$\bullet$} at  2 -0.5
\put{$\bullet$} at  3 0  \put{$\bullet$} at  3 -0.5
\put{$\bullet$} at  5 0  \put{$\bullet$} at  5 -0.5
\put{$\bullet$} at  6 0  \put{$\bullet$} at  6 -0.5
\put{$\bullet$} at  7 0  \put{$\bullet$} at  7 -0.5
\put{$\cdots$} at 0 -0.2
\put{$\cdots$} at 8 -0.2

\setdashes  <.4mm,1mm>
\plot 4 -4   4 4 /
\setsolid

\arrow <3 pt> [1,2] from 4.55 -2 to  4.5 -2
\arrow <3 pt> [1,2] from 3.85 0.7 to  3.8 0.7

\setquadratic
\plot 3 0 4 0.7 5 0 /
\plot 2 0 4 1.4 6 0  /
\plot 1 0 4 2.1 7 0  /
\plot 1 -0.5  3 -1.5   5 -0.5 /
\plot 2 -0.5  4.5 -2   7 -0.5 /
\plot 3 -0.5  4.5 -1.5   6 -0.5 /
\endpicture}$$
Observe that
$$d_1'=\sigma e_{x_1,y_1}e_{x_2,y_3}e_{x_3,y_2}, \quad
d_2'=e_{x_1,y_3} e_{x_2,y_2} e_{x_3,y_1} \tau$$ for some
$(r,s)$-superdiagrams $\sigma, \tau$ without the marked edges. Since
$\Psirs$ preserves the multiplication for $(r,s)$-superdiagrams
without the marked edges, combined with Lemma \ref{lem:relations in
the centralizer}, we obtain
$$\widetilde{d}_1= \pm \widetilde{c}_{A_1} \widetilde{\sigma} \widetilde{c}_{B_1}
\widetilde{e}_{x_1,y_1} \widetilde{e}_{x_2,y_3}
\widetilde{e}_{x_3,y_2} \widetilde{c}_{x_2}, \ \ \widetilde{d}_2=
\pm \widetilde{c}_{x_3} \te_{x_1,y_3} \te_{x_2,y_2} \te_{x_3,y_1}
\widetilde{c}_{A_2} \widetilde{\tau} \widetilde{c}_{B_2},$$ where
$A_i,B_i$ are the set of indices of the marked edges. Since $\tc_i
\tc_j=-\tc_j\tc_i$ for $i \neq j$, we need to take the sign into
account.

By the associativity of $\End_{\C}(\mix)^{\op}$ and the relations in
Lemma \ref{lem:relations in the centralizer}, we know
$\widetilde{c}_{x_3}$ can be transformed  into $\widetilde{c}_{x_2}$
in $\widetilde{e}_{x_1,y_1} \widetilde{e}_{x_2,y_3}
\widetilde{e}_{x_3,y_2} \widetilde{c}_{x_2} \widetilde{c}_{x_3}
\te_{x_1,y_3} \te_{x_2,y_2} \te_{x_3,y_1}$. In general case, since
there are even number of the marked edges in a loop, the
$\widetilde{c}_i$'s will be canceled out, too. Notice that
\begin{align*}
\widetilde{e}_{x_1,y_1} \widetilde{e}_{x_2,y_3} \widetilde{e}_{x_3,y_2}
\te_{x_1,y_3} \te_{x_2,y_2} \te_{x_3,y_1}
=\Psirs(e_{x_1,y_1}e_{x_2,y_3} e_{x_1,y_3} e_{x_2,y_2} e_{x_3,y_1})
=0.
\end{align*}
Since $\widetilde{d}_1 \widetilde{d}_2  = \pm \cdots \widetilde{e}_{x_1,y_1}
\widetilde{e}_{x_2,y_3} \widetilde{e}_{x_3,y_2} \widetilde{c}_{x_2}
\widetilde{c}_{x_3}
\te_{x_1,y_3} \te_{x_2,y_2} \te_{x_3,y_1} \cdots$,
we obtain the desired result.

The general case can be handled in this manner.

\vskip 3mm

Next, let us consider the case $d_1 d_2 \neq 0$. For an
$(r,s)$-superdiagram $d$ without marked edges, we can decompose $d$
as follows.

(1) For $d$ in $\BB$, read the left vertex of each horizontal edges
on the bottom row  from left to right to obtain a sequence $p_1
\cdots p_a $. We denote by $q_a$ the right vertex of the horizontal
edge connected with $p_a$-th vertex.

(2) Read the left vertex of each horizontal edge on the top row from
left to right to obtain a sequence $p'_1 \cdots p'_a $. Let $q'_a$
be the right vertex of the edge connected with $p'_a$-th vertex.

(3) Let $\sigma$ be the $(r,s)$-superdiagram such that $p'_i$-th
vertex (respectively, $q'_i$-th vertex) on the top row is connected
with $p_i$-th vertex (respectively, $q_i$-th vertex) on the bottom
row by a normal edge. The other edges of $\sigma$ are all normal and
have the same connection as $d$. Therefore, $\sigma \in \Sigma_r
\times \Sigma_s$ and $\sigma^{-1}(p_i)=p'_i, \
\sigma^{-1}(q_i)=q'_i$ for all $i$, which implies $d=e_{p_1,q_1}
\cdots e_{p_a,q_a} \sigma$.

For a general $d \in \BB$, let $P$ (respectively, $Q$) be the set of
good vertices of marked horizontal edges on the bottom row
(respectively, marked vertical edges or marked horizontal edges on
the top row) of $d$. Let $c_A:=(((c_{a_1} c_{a_2}) \cdots )c_{a_m})$
for a non-empty set $A=\{a_1< \cdots< a_m \} \subset \{1, \cdots,
r+s \}$, and $c_A:=1$ for an empty set $A$, where $c_i$ is the
$(r,s)$-superdiagram defined in \eqref{def:a superdiagram c_i-1}, \eqref{def:a superdiagram c_i-2}. Then
we can decompose $d$ in the following form:
\begin{align} \label{eq:basis element form of B_r,s}
d=((((c_P e_{p_1,q_1}) \cdots e_{p_a,q_a}) \sigma) c_{Q})
\end{align}
satisfying the conditions

(i) $1 \leq p_1 < \cdots < p_a \leq r$, and $r+1 \leq q_i \leq r+s$ are all distinct,

(ii) $\sigma \in \Sigma_r \times \Sigma_s$ and $\sigma^{-1}(p_1) <
\cdots <\sigma^{-1}(p_a)$,

(iii) $ P \subset \{p_1, \cdots, p_a \}$, $Q \subset \{1, \cdots,
r+s \} \setminus \{\sigma^{-1}(q_1), \cdots, \sigma^{-1}(q_a) \} $.

\vskip3mm Note that when we calculate the multiplication in the
right-hand side of \eqref{eq:basis element form of B_r,s}, the
exponent of $-1$ is always $0$.  By Lemma \ref{lem:decomposition of
a superdiagram} and \cite[Lemma 7.4]{BS}, we obtain the
decomposition
$$\widetilde{d}_1= \widetilde{c}_P \widetilde{e}_{p_1,q_1} \cdots \widetilde{e}_{p_a,q_a}
 \widetilde{\sigma} \widetilde{c}_Q, \ \ \
\widetilde{d}_2= \widetilde{c}_{T} \widetilde{e}_{t_1,u_1} \cdots
\widetilde{e}_{t_x,u_x} \widetilde{\tau} \widetilde{c}_{U} .$$ For
simplicity,  let $\td_1'=\widetilde{e}_{p_1,q_1} \cdots
\widetilde{e}_{p_a,q_a}
 \widetilde{\sigma}$, $\td_2'=\widetilde{e}_{t_1,u_1} \cdots
\widetilde{e}_{t_x,u_x} \widetilde{\tau}$.

Note that we color all the elements of $P,Q,T,U$ with $\boxed{i}$ or
{\large \textcircled{\small{$i$}}}. Also we color the elements of
$P$ (respectively, $U$) with the square (respectively, circle). Let
$A_1$ (respectively, $A_2$) be the set of elements in $Q$ which are
colored with the square (respectively circle). Similarly, let $B_1$
(respectively, $B_2$) be the set of elements in $T$ which are
colored with the square (respectively, circle). Then we have
$$\td_1 \td_2 = (-1)^{c(d_1,d_2)} \tc_{P} \td_1' \tc_{A_1} \tc_{B_1} \tc_{A_2}\tc_{B_2} \td_2' \tc_{U}.$$
We shift the element $\tc_i$ ($i \in A_2, B_2,U$) using the
relations in Lemma \ref{lem:relations in the centralizer}. Then we
have
$$
\td_1 \td_2
=(-1)^{c(d_1,d_2)+p_1(d_1,d_2)} \tc_{P} \td_1' \tc_{A_1} \tc_{B_1} \td_2' \tc_{a_1} \cdots \tc_{a_t},
$$
where $a_i$'s are the good vertices of new edges in $d_1 *d_2$ and
$p_1(d_1,d_2)$ is the sum of passing numbers for all {\large
\textcircled{\small{$i$}}}. The exponent  $p_1(d_1,d_2)$ arises from
the following formula:
$$\te_{p,q} \tc_{b_1} \cdots \tc_{b_m} \tc_q
=(-1)^{x} \te_{p,q} \tc_{b_1} \cdots \tc_{b_m} \tc_p,$$ where all
$b_i \neq q$ and $x$ is the number of $b_i$'s such that $b_i=p$. For
example, if $$d_1={\beginpicture \setcoordinatesystem units
<0.78cm,0.39cm> \setplotarea x from 0 to 4, y from -2 to 3
\put{$\bullet$} at  1 -1  \put{$\bullet$} at  1 2
\put{$\bullet$} at  2 -1  \put{$\bullet$} at  2 2
\put{$\bullet$} at  3 -1  \put{$\bullet$} at  3 2

\plot 1 -1 1 2 /

\arrow <3 pt> [1,2] from 2.55 1 to 2.5 1
\setdashes  <.4mm,1mm>
\plot 2.5 -2   2.5 3 /
\setsolid

\setquadratic
\plot 2 -1 2.5 0 3 -1 /
\plot 2 2 2.5 1 3 2  /
\setsolid
\endpicture} \quad \text{and} \quad d_2={\beginpicture
\setcoordinatesystem units <0.78cm,0.39cm>
\put{$\bullet$} at  1 -1  \put{$\bullet$} at  1 2
\put{$\bullet$} at  2 -1  \put{$\bullet$} at  2 2
\put{$\bullet$} at  3 -1  \put{$\bullet$} at  3 2
\plot 2 -1 1 2 /
\arrow <3 pt> [1,2] from 2.05 0 to 2 0
\setdashes  <.4mm,1mm>
\plot 2.5 -2   2.5 3 /
\setsolid

\setquadratic
\plot 1 -1 2 0 3 -1 /
\plot 2 2 2.5 1 3 2  /
\setsolid
\endpicture},$$ the labeled diagram is as follows:
$${\beginpicture
\setcoordinatesystem units <0.78cm,0.39cm>
\setplotarea x from 0 to 4, y from -2 to 3
\put{$\bullet$} at  1 -1  \put{$\bullet$} at  1 2
\put{$\bullet$} at  2 -1  \put{$\bullet$} at  2 2
\put{$\bullet$} at  3 -1  \put{$\bullet$} at  3 2

\plot 2 -1 1 2 /

\arrow <3 pt> [1,2] from 2.05 0 to 2 0
\setdashes  <.4mm,1mm>
\plot 2.5 -2   2.5 3 /
\setsolid

\put{$\bullet$} at  1 -5  \put{$\bullet$} at  1 -2
\put{$\bullet$} at  2 -5  \put{$\bullet$} at  2 -2
\put{$\bullet$} at  3 -5  \put{$\bullet$} at  3 -2
\put{\textcircled{{\scriptsize 1}}} at 1.7 -2
\put{\textcircled{{\scriptsize 2}}} at .7 -1

\plot 1 -5 1 -2 /

\arrow <3 pt> [1,2] from 2.55 -3 to 2.5 -3
\setdashes  <.4mm,1mm>
\plot 2.5 -6   2.5 -2 /
\setsolid

\setquadratic
\plot 1 -1 2 0 3 -1 /
\plot 2 2 2.5 1 3 2  /
\setquadratic
\plot 2 -5 2.5 -4 3 -5 /
\plot 2 -2 2.5 -3 3 -2  /
 \endpicture}.$$
By Lemma \ref{lem:relations in the centralizer}, we obtain
\begin{align*}
  \td_1 \td_2 &=(\te_{2,3}\tc_2)(\tc_1\te_{1,3}\ts_1) =  \te_{2,3}\tc_2 (\tc_1\te_{1,3})\ts_1
  =\te_{2,3}\tc_2 (\tc_3\te_{1,3})\ts_1
  =(\te_{2,3} \tc_2 \tc_3)\te_{1,3}\ts_1 \\
&  =(-1) \ (\te_{2,3} \tc_2 \tc_2)\te_{1,3}\ts_1
 =(-1) \ \te_{2,3} \tc_2 (\tc_2\te_{1,3}\ts_1)
 =(-1) \ \te_{2,3} \tc_2 (\te_{1,3}\ts_1 \tc_1) \\
 & =(-1) \ \te_{2,3} (\tc_2\te_{1,3}\ts_1) \tc_1
 =(-1) \ \te_{2,3} (\te_{1,3}\ts_1 \tc_1) \tc_1=(-1) \ \te_{2,3} \te_{1,3}\ts_1 \tc_1 \tc_1.
\end{align*}
Moreover, we have
$$
\tc_{a_1} \cdots \tc_{a_t}
=(-1)^{\ell_1(d_1,d_2) +\rho_1(d_1,d_2)}
\tc_{h'_1} \cdots \tc_{h'_z},
$$
where $h'_1 < h'_2 < \cdots < h'_z$. It follows that
$$
\td_1 \td_2
=(-1)^{c(d_1,d_2)+p_1(d_1,d_2) +\ell_1(d_1,d_2) +\rho_1(d_1,d_2)}
\tc_{P} \td_1' \tc_{A_1} \tc_{B_1} \td_2' \tc_{h'_1} \cdots \tc_{h'_z}.
$$

Similarly, by shifting $\tc_i$'s for $i \in P, A_1, B_1$, we have
$$
\td_1 \td_2
=(-1)^{c(d_1,d_2)+p(d_1,d_2) +\ell(d_1,d_2) +\rho(d_1,d_2)}
\tc_{h_1} \cdots \tc_{h_y} \td_1'  \td_2' \tc_{h'_1} \cdots \tc_{h'_z},
$$
where $h_1 < h_2 < \cdots < h_y$. Note that $\Psirs(d_1
*d_2)=\tc_{h_1} \cdots \tc_{h_y} \td_1'  \td_2' \tc_{h'_1} \cdots
\tc_{h'_z}$. Hence we obtain
$$\Psirs(d_1)\Psirs(d_2)=\Psirs(d_1d_2)$$
as desired.
\end{proof}

\begin{example}
For $(3,3)$-superdiagrams $d_1,d_2$ in Example \ref{ex:example of the multiplication}, we can decompose
  $$d_1=c_2 e_{2,5} s_4 c_1 c_2 c_6, \ d_2=c_1c_3e_{1,6}e_{3,4} s_2 c_1.$$
 The decomposition of $d_1$ is shown below:
  \begin{center}
  ${\beginpicture
\setcoordinatesystem units <0.78cm,0.39cm>
\setplotarea x from -8 to 7, y from -7 to 10
\put{$d_1 = $} at -7 1.5
\put{$\bullet$} at  -6 0  \put{$\bullet$} at  -6 3
\put{$\bullet$} at  -5 0  \put{$\bullet$} at  -5 3
\put{$\bullet$} at  -4 0  \put{$\bullet$} at  -4 3
\put{$\bullet$} at  -3 0  \put{$\bullet$} at  -3 3
\put{$\bullet$} at  -2 0  \put{$\bullet$} at  -2 3
\put{$\bullet$} at  -1 0  \put{$\bullet$} at  -1 3

\plot -6 3 -6 0 /
\plot -4 3 -4 0 /
\plot -3 0 -2 3 /
\plot -1 0 -1 3 /
\arrow <3 pt> [1,2] from -6 0 to  -6 1.7
\arrow <3 pt> [1,2] from -3.35 1 to -3.4 1
\arrow <3 pt> [1,2] from -4.25 2 to -4.3 2
\arrow <3 pt> [1,2] from -1 0 to -1 1.7

\setdashes  <.4mm,1mm>
\plot -3.5 -1   -3.5 4 /
\setsolid

\put{$ = $} at 0 1.5
\put{$\bullet$} at  1 -2  \put{$\bullet$} at  1 1
\put{$\bullet$} at  2 -2  \put{$\bullet$} at  2 1
\put{$\bullet$} at  3 -2  \put{$\bullet$} at  3 1
\put{$\bullet$} at  4 -2  \put{$\bullet$} at  4 1
\put{$\bullet$} at  5 -2  \put{$\bullet$} at  5 1
\put{$\bullet$} at  6 -2  \put{$\bullet$} at  6 1

\plot 1 -2 1 1 /
\plot 3 -2 3 1 /
\plot 6 -2 6 1 /
\plot 4 -2 4 1 /

\setdashes  <.4mm,1mm>
\plot 3.5 -7   3.5 10 /
\setsolid

\put{$\bullet$} at  1 -3  \put{$\bullet$} at  1 -6
\put{$\bullet$} at  2 -3  \put{$\bullet$} at  2 -6
\put{$\bullet$} at  3 -3  \put{$\bullet$} at  3 -6
\put{$\bullet$} at  4 -3  \put{$\bullet$} at  4 -6
\put{$\bullet$} at  5 -3  \put{$\bullet$} at  5 -6
\put{$\bullet$} at  6 -3  \put{$\bullet$} at  6 -6

\plot 1 -3 1 -6 /
\plot 2 -3 2 -6 /
\plot 3 -3 3 -6 /
\plot 4 -3 4 -6 /
\plot 5 -3 5 -6 /
\plot 6 -3 6 -6 /
\arrow <3 pt> [1,2] from 2 -6 to 2 -4.3

\put{$\bullet$} at  1 2  \put{$\bullet$} at  1 5
\put{$\bullet$} at  2 2  \put{$\bullet$} at  2 5
\put{$\bullet$} at  3 2  \put{$\bullet$} at  3 5
\put{$\bullet$} at  4 2  \put{$\bullet$} at  4 5
\put{$\bullet$} at  5 2  \put{$\bullet$} at  5 5
\put{$\bullet$} at  6 2  \put{$\bullet$} at  6 5

\plot 1 2 1 5 /
\plot 2 2 2 5 /
\plot 3 2 3 5 /
\plot 4 2 5 5 /
\plot 6 2 6 5 /
\plot 4 5 5 2 /

\put{$\bullet$} at  1 6  \put{$\bullet$} at  1 9
\put{$\bullet$} at  2 6  \put{$\bullet$} at  2 9
\put{$\bullet$} at  3 6  \put{$\bullet$} at  3 9
\put{$\bullet$} at  4 6  \put{$\bullet$} at  4 9
\put{$\bullet$} at  5 6  \put{$\bullet$} at  5 9
\put{$\bullet$} at  6 6  \put{$\bullet$} at  6 9

\plot 1 9  1 6 /
\plot 2 9  2 6 /
\plot 3 9  3 6 /
\plot 4 9  4 6 /
\plot 5 9  5 6 /
\plot 6 9  6 6 /
\arrow <3 pt> [1,2] from 1 6 to 1 7.7
\arrow <3 pt> [1,2] from 2 6 to 2 7.7
\arrow <3 pt> [1,2] from 6 6 to 6 7.7

\setquadratic
\plot 2 1 3.5 0 5 1 /
\plot 2 -2 3.5 -1 5 -2  /

\plot -5 3 -4 2 -3 3 /
\plot -5 0 -3.5 1 -2 0  /
\endpicture}$
\end{center}
\end{example}

\vskip 3mm

Now we are ready to state and prove our main theorem.

\begin{theorem} \label{th:mixed Schur-Weyl-Sergeev duality}\hfill
{\rm

(a) The walled Brauer superalgebra $\BB$ is an associative
superalgebra for all $r,s \ge 0$.

(b) When $n \geq r+s$, the walled Brauer superalgebra $\BB$ is
isomorphic to
  the supercentralizer algebra $\cent^{\op}$ as an associative superalgebra.
}
\begin{proof}
(a) When $n \ge r+s$, since $\cent^{\op}$ is an associative
superalgebra, Theorem \ref{th:Psi is a bijection} and Theorem
\ref{prop:Psi is an algebra homomorphism} show that $\BB$ is an
associative superalgebra. Note that the associativity condition
$(d_1d_2)d_3=d_1(d_2d_3)$ does not depend on $n$ for all
$(r,s)$-superdiagrams $d_1,d_2,d_3 \in \BB$. Therefore, if $\BB$ is
associative for large enough $n$, then $\BB$ is associative for all
$r,s \ge 0$.

Now the assertion (b) is obvious.
 \end{proof}
\end{theorem}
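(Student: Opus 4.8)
The plan is to deduce both parts formally from what has already been proved about the linear map $\Psi_{r,s}\colon\BB\to\End_\C(\mix)$: namely that it is a superalgebra homomorphism into $\cent^{\op}$ (Proposition~\ref{prop:Psi is an algebra homomorphism}) and that it is surjective onto $\cent$, and a linear bijection as soon as $n\ge r+s$ (Theorem~\ref{th:Psi is a bijection}). The one structural remark I would make explicit at the outset is that the superspace $\BB$ and its multiplication $d_1d_2=(-1)^{c(d_1,d_2)+\ell(d_1,d_2)+\rho(d_1,d_2)+p(d_1,d_2)}\,d_1*d_2$ are defined purely in terms of $(r,s)$-superdiagrams, with no reference to $n$ or to $\V$; hence the structure constants of $\BB$ in the superdiagram basis are fixed integers, and the associativity relation $(d_1d_2)d_3=d_1(d_2d_3)$ is a finite system of identities among these $n$-free integers. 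In particular, if it holds for one value of $n$, it holds for all of them.

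For part~(a), fix $r,s$ with $r+s>0$ and choose any $n\ge r+s$. For this $n$ the algebra $\cent^{\op}=\End_{\qn}(\mix)^{\op}$ is associative, being opposite to an endomorphism algebra. By Theorem~\ref{th:Psi is a bijection} the map $\Psi_{r,s}\colon\BB\to\cent^{\op}$ is bijective, and by Proposition~\ref{prop:Psi is an algebra homomorphism} it is multiplicative. Hence, for any $d_1,d_2,d_3\in\BB$,
\[
\Psi_{r,s}\bigl((d_1d_2)d_3\bigr)=\Psi_{r,s}(d_1)\,\Psi_{r,s}(d_2)\,\Psi_{r,s}(d_3)=\Psi_{r,s}\bigl(d_1(d_2d_3)\bigr),
\]
and injectivity of $\Psi_{r,s}$ forces $(d_1d_2)d_3=d_1(d_2d_3)$. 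So $\BB$ is associative for this $n$, hence, by the $n$-independence remark, for all $r,s$; the degenerate case $r+s=0$ is trivial.

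For part~(b), assume $n\ge r+s$. By part~(a), $\BB$ is an associative superalgebra; $\Psi_{r,s}$ is even (this is recorded just before Proposition~\ref{lem:supercommute between B_r,s and q(n)}), lands in $\cent$ and is multiplicative into $\cent^{\op}$ by Proposition~\ref{prop:Psi is an algebra homomorphism}, and it is a linear bijection by Theorem~\ref{th:Psi is a bijection}. Therefore $\Psi_{r,s}\colon\BB\xrightarrow{\sim}\cent^{\op}$ is an isomorphism of associative superalgebras, which is exactly~(b).

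As for where the effort actually sits: by this point essentially everything hard has been done. The genuinely technical input is Proposition~\ref{prop:Psi is an algebra homomorphism} --- compatibility of $\Psi_{r,s}$ with the combinatorial product, including the delicate bookkeeping of the four sign-exponents $c,\ell,\rho,p$ and the vanishing of a diagram whenever marked concatenation produces a middle-row loop carrying an odd number of marks --- together with the bijectivity part of Theorem~\ref{th:Psi is a bijection}, which rests on Lemma~\ref{lem:Lemma} and the $\mathfrak{gl}(m|n)$ computations of~\cite{BS}. The only new ingredient here is the transfer of associativity from $\cent^{\op}$ back to $\BB$, and the one subtlety I would be careful to state cleanly is that associativity of $\BB$ does not depend on $n$, so that it is legitimate to verify it for a single large~$n$.
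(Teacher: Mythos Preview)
Your argument is correct and matches the paper's own proof essentially line for line: pull associativity back through the bijective multiplicative map $\Psi_{r,s}$ for $n\ge r+s$, invoke the $n$-independence of the diagrammatic product, and then read off part~(b). The only additions you make are expository (spelling out the injectivity step and commenting on where the real work lies), which is fine.
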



\section{Presentation of walled Brauer superalgebras}

Another main result of this paper is a presentation of $\BB$. Assume
that $1\leq i \leq r-1, \ r+1 \leq j \leq r+s-1$, $1 \leq k \leq r$,
$r+1 \leq l \leq r+s$. Let us consider the following diagrams in
$\BB$\,:

${\beginpicture
\setcoordinatesystem units <0.78cm,0.39cm>
\setplotarea x from 0 to 9, y from -1 to 4
\put{$s_i:= $} at 0 1.5
\put{$\bullet$} at  1 0  \put{$\bullet$} at  1 3
\put{$\bullet$} at  2 0  \put{$\bullet$} at  2 3
\put{$\bullet$} at  3 0  \put{$\bullet$} at  3 3
\put{$\bullet$} at  4 0  \put{$\bullet$} at  4 3
\put{$\bullet$} at  5 0  \put{$\bullet$} at  5 3
\put{$\bullet$} at  6 0  \put{$\bullet$} at  6 3
\put{$\bullet$} at  7 0  \put{$\bullet$} at  7 3
\put{$\bullet$} at  8 0  \put{$\bullet$} at  8 3

\put{$\cdots$} at 1.5 1.5
\put{$\cdots$} at 5.5 1.5
\put{$\cdots$} at 7.5 1.5
\put{{\scriptsize$i$}} at 3 4
\put{{\scriptsize$i+1$}} at 4 4
\put{,} at 8.5 1
\plot 1 3 1 0 /
\plot 2 3 2 0 /
\plot 3 3 4 0 /
\plot 4 3 3 0 /
\plot 5 3 5 0 /
\plot 6 3 6 0 /
\plot 7 3 7 0 /
\plot 8 3 8 0 /
\setdashes  <.4mm,1mm>
\plot 6.5 -1   6.5 4 /
\setsolid
\endpicture}$

${\beginpicture \setcoordinatesystem units <0.78cm,0.39cm>
\setplotarea x from 0 to 9, y from -1 to 4 \put{$s_j:= $} at 0 1.5
\put{$\bullet$} at  1 0 \put{$\bullet$} at  1 3 \put{$\bullet$} at 2
 0  \put{$\bullet$} at  2 3 \put{$\bullet$} at  3 0 \put{$\bullet$}
at 3 3 \put{$\bullet$} at  4 0  \put{$\bullet$} at 4 3
\put{$\bullet$} at  5 0  \put{$\bullet$} at  5 3 \put{$\bullet$} at
6 0 \put{$\bullet$} at  6 3 \put{$\bullet$} at  7 0 \put{$\bullet$}
at 7 3 \put{$\bullet$} at  8 0  \put{$\bullet$} at 8 3

\put{$\cdots$} at 1.5 1.5
\put{$\cdots$} at 3.5 1.5
\put{$\cdots$} at 7.5 1.5
\put{,} at 8.5 1
\put{{\scriptsize$j$}} at 5 4
\put{{\scriptsize$j+1$}} at 6 4

\plot 1 3 1 0 /
\plot 2 3 2 0 /
\plot 3 3 3 0 /
\plot 4 3 4 0 /
\plot 5 3 6 0 /
\plot 6 3 5 0 /
\plot 7 3 7 0 /
\plot 8 3 8 0 /
\setdashes  <.4mm,1mm>
\plot 2.5 -1   2.5 4 /
\setsolid
\endpicture}$ \\

${\beginpicture
\setcoordinatesystem units <0.78cm,0.39cm>
\put{$e_{r,r+1} := $} at -0.5 1.5
\put{$\bullet$} at  1 0  \put{$\bullet$} at  1 3
\put{$\bullet$} at  3 0  \put{$\bullet$} at  3 3
\put{$\bullet$} at  4 0  \put{$\bullet$} at  4 3
\put{$\bullet$} at  5 0  \put{$\bullet$} at  5 3
\put{$\bullet$} at  6 0  \put{$\bullet$} at  6 3
\put{$\bullet$} at  8 0  \put{$\bullet$} at  8 3

\put{$\cdots$} at 2 1.5
\put{$\cdots$} at 7 1.5
\put{,} at 8.5 1
\put{{\scriptsize $1$}} at 1 4
\put{{\scriptsize $r$}} at 4 4
\put{{\scriptsize $r+1$}} at 5 4
\put{{\scriptsize $r+s$}} at 8 4
\plot 1 3 1 0 /
\plot 3 3 3 0 /
\plot 8 3 8 0 /
\plot 6 3 6 0 /
\setdashes  <.4mm,1mm>
\plot 4.5 -1   4.5 4 /
\setsolid
\setquadratic
\plot 4 3 4.5 2 5 3 /
\plot 4 0 4.5 1 5 0  /
\endpicture}$ \\

$ {\beginpicture
\setcoordinatesystem units <0.78cm,0.39cm>
\setplotarea x from 0 to 8, y from -1 to 4
\put{$c_k: = $} at 0 1.5
\put{$\bullet$} at  1 0  \put{$\bullet$} at  1 3
\put{$\bullet$} at  2 0  \put{$\bullet$} at  2 3
\put{$\bullet$} at  3 0  \put{$\bullet$} at  3 3
\put{$\bullet$} at  4 0  \put{$\bullet$} at  4 3
\put{$\bullet$} at  5 0  \put{$\bullet$} at  5 3
\put{$\bullet$} at  6 0  \put{$\bullet$} at  6 3
\put{$\bullet$} at  7 0  \put{$\bullet$} at  7 3

\put{$\cdots$} at 1.5 1.5
\put{$\cdots$} at 4.5 1.5
\put{$\cdots$} at 6.5 1.5
\put{{\scriptsize $k$}} at 3 4
\put{,} at 7.5 1
\plot 1 3 1 0 /
\plot 2 3 2 0  /
\plot 3 3 3 0 /
\plot 4 3 4 0 /
\plot 5 3 5 0 /
\plot 6 3 6 0 /
\plot 7 3 7 0 /
\arrow <3 pt> [1,2] from 3 0 to  3 1.7
\setdashes  <.4mm,1mm>
\plot 5.5 -1   5.5 4 /
\endpicture}$ \\

${\beginpicture \setcoordinatesystem units <0.78cm,0.39cm>
\setplotarea x from 0 to 7.5, y from -1 to 4 \put{$c_l: = $} at 0
1.5 \put{$\bullet$} at  1 0  \put{$\bullet$} at  1 3 \put{$\bullet$}
at  2 0  \put{$\bullet$} at  2 3 \put{$\bullet$} at  3 0
\put{$\bullet$} at  3 3 \put{$\bullet$} at  4 0  \put{$\bullet$} at
4 3 \put{$\bullet$} at  5 0  \put{$\bullet$} at  5 3 \put{$\bullet$}
at  6 0  \put{$\bullet$} at  6 3 \put{$\bullet$} at  7 0
\put{$\bullet$} at  7 3

\put{$\cdots$} at 1.5 1.5
\put{$\cdots$} at 3.5 1.5
\put{$\cdots$} at 6.5 1.5
\put{.} at 7.5 1
\put{{\scriptsize $l$}} at 5 4

\plot 1 3 1 0 /
\plot 2 3 2 0  /
\plot 3 3 3 0 /
\plot 4 3 4 0 /
\plot 5 3 5 0 /
\plot 6 3 6 0 /
\plot 7 3 7 0 /
\arrow <3 pt> [1,2] from 5 0 to  5 1.7
\setdashes  <.4mm,1mm>
\plot 2.5 -1   2.5 4 /
\endpicture}$

\nc{\er}{e_{r,r+1}}
\nc{\epq}{e_{p,q}}
\nc{\eppq}{e_{p',q}}
\nc{\epqp}{e_{p,q'}}
\nc\Srs{\Sigma_r \times \Sigma_s}

\begin{theorem}  \label{th:presentation of B_r,s}
  {\rm The walled Brauer superalgebra $\BB$ is the associative superalgebra generated by (even generators)
  $1, s_1, \ldots, s_{r-1},$ $s_{r+1}, \ldots,$ $s_{r+s-1}$,
  $e_{r,r+1}$ and (odd generators) $c_1, \ldots, c_{r+s}$
  with the following defining relations (for admissible $i,j$)\,:
  \begin{align}
    &s_i^2=1, \ \  s_is_{i+1}s_i=s_{i+1}s_is_{i+1}, \ \ s_is_j=s_js_i \ \  (|i-j|>1), \label{rel:1}\\
 & \er^2=0, \ \ \er s_j =s_j \er \ \ (j \neq r-1,r+1),\label{rel:2}\\
 & \er=\er s_{r-1} \er =\er s_{r+1} \er, \label{rel:3}\\
& s_{r-1} s_{r+1} \er s_{r+1} s_{r-1} \er  = \er s_{r-1} s_{r+1} \er s_{r+1} s_{r-1},  \label{rel:4}\\
  &c_i^2=-1 \ \ (1 \leq i \leq r), \ \ c_i^2=1 \ \ (r+1 \leq i \leq r+s), \ \ c_i c_j=-c_j c_i \ \ (i \neq j),  \label{rel:6}\\
  & s_i c_i s_i=c_{i+1}, \ \  s_i c_j=c_j s_i \ \ (j \neq i, i+1), \label{rel:7}\\
   & c_r \er = c_{r+1} \er, \ \ \er c_r =\er c_{r+1},  \label{rel:8}\\
   &\er c_r \er=0, \ \ \er c_j=c_j \er \ \ (j \neq r, r+1). \label{rel:9}
  \end{align}}
\end{theorem}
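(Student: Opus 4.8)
The plan is to realize $\BB$ as the target of an obvious map from the abstractly presented algebra and then force that map to be an isomorphism by a dimension count. Let $\mathcal{A}$ denote the associative superalgebra on the stated even and odd generators modulo \eqref{rel:1}--\eqref{rel:9}; it is associative by construction, and $\BB$ is associative by Theorem~\ref{th:mixed Schur-Weyl-Sergeev duality}(a). First I would check, by direct computation with the marked concatenation and sign rule of Section~4, that the $(r,s)$-superdiagrams denoted $s_i$, $e_{r,r+1}$ and $c_k$ satisfy each of \eqref{rel:1}--\eqref{rel:9} inside $\BB$; this produces a superalgebra homomorphism $\Theta\colon\mathcal{A}\to\BB$ carrying generators to the like-named diagrams. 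It is surjective: by the decomposition \eqref{eq:basis element form of B_r,s} every $(r,s)$-superdiagram equals $c_P\,e_{p_1,q_1}\cdots e_{p_a,q_a}\,\sigma\,c_Q$, each $e_{p,q}$ equals $w\,e_{r,r+1}\,w^{-1}$ for some $w\in\Sigma_r\times\Sigma_s$, and $\sigma$ is a product of the $s_i$'s, so $s_i,e_{r,r+1},c_k$ generate $\BB$. Thus once we show $\dim\mathcal{A}\le\dim\BB=2^{r+s}(r+s)!$ it follows that $\Theta$ is an isomorphism.

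For that bound I would show $\mathcal{A}$ is spanned by at most $2^{r+s}(r+s)!$ monomials. Relations \eqref{rel:1}--\eqref{rel:4} are (a variant of) the known presentation of the walled Brauer algebra $B_{r,s}(0)$, so the subalgebra $\langle 1,s_i,e_{r,r+1}\rangle\subseteq\mathcal{A}$ is a quotient of $B_{r,s}(0)$; composing with $\Theta$ and noting the composite is the identity on generators shows this subalgebra is isomorphic to $B_{r,s}(0)$, hence spanned by the $(r+s)!$ standard monomials $\mathcal{B}_0=\{e_{p_1,q_1}\cdots e_{p_a,q_a}\,\sigma\}$ constrained by (i),(ii) of \eqref{eq:basis element form of B_r,s} (alternatively, this is proved directly by a spanning argument using only \eqref{rel:1}--\eqref{rel:4}). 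Next comes a straightening procedure: in an arbitrary word in the generators, use \eqref{rel:7}--\eqref{rel:9} to push every $c_i$ to the far left or far right --- $c_p$ adjacent to $e_{p,q}$ becomes $c_q$ (or conversely), $c_{p'}$ with $p'\notin\{p,q\}$ passes through, and an occurrence of the pattern $e_{p,q}c_p e_{p,q}$ kills the word --- and then, using $c_ic_j=-c_jc_i$ and $c_i^2=\pm1$, normalize the two resulting Clifford strings into an expression $c_K\,\beta\,c_L$ with $\beta\in\mathcal{B}_0$, $K\subseteq\{p_1,\dots,p_a\}$, and $L\subseteq\{1,\dots,r+s\}\setminus\{\sigma^{-1}(q_1),\dots,\sigma^{-1}(q_a)\}$ --- precisely the normal form \eqref{eq:basis element form of B_r,s}. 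For a fixed $\beta$ with $a$ horizontal edges on each side there are $2^{a}\cdot 2^{(r+s)-a}=2^{r+s}$ admissible pairs $(K,L)$, so $\dim\mathcal{A}\le 2^{r+s}(r+s)!=\dim\BB$ and $\Theta$ is an isomorphism of superalgebras.

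The principal obstacle is the straightening step: one must verify that \eqref{rel:6}--\eqref{rel:9} genuinely suffice to bring every word into the normal form $c_K\beta c_L$ with exactly the index restrictions (i)--(iii), so that no admissible monomial is over- or under-counted and the spanning bound is sharp. Read in reverse, this is essentially the bookkeeping already carried out in the proof of Proposition~\ref{prop:Psi is an algebra homomorphism} --- the passing-number and arranging-number analysis there is exactly the mechanism by which the $c_i$'s migrate through the $e$'s and the permutations --- so I would organize the reduction so as to reuse that computation. A subsidiary point requiring care is the left/right asymmetry $c_i^2=-1$ for $i\le r$ versus $c_i^2=1$ for $i>r$, together with the mixed relations $c_re_{r,r+1}=c_{r+1}e_{r,r+1}$ and $e_{r,r+1}c_r=e_{r,r+1}c_{r+1}$, which encode $\W=\V^{*}$ and must be tracked consistently through the conjugations $e_{p,q}=w\,e_{r,r+1}\,w^{-1}$ used to reduce general $e_{p,q}$ to $e_{r,r+1}$.
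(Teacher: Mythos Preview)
Your plan is correct and is essentially the paper's own argument: exhibit a surjection from the abstractly presented algebra onto $\BB$, then bound its dimension by showing every word reduces to a normal form $c_P\,e_{p_1,q_1}\cdots e_{p_a,q_a}\,\sigma\,c_Q$ and counting these. Two small points of divergence are worth noting. First, the paper does not import the presentation of $B_{r,s}(0)$ as a black box; it carries out the full straightening directly in the abstract algebra $F$, first deriving from \eqref{rel:1}--\eqref{rel:9} the auxiliary identities for general $e_{p,q}$ (commutation, $e_{p,q}e_{p,q'}=e_{p,q}(q\,q')$, $c_q e_{p,q}=c_p e_{p,q}$, $e_{p,q}c_pe_{p,q}=0$, etc.) and then using those to push the $c_i$'s outward. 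Second, your remark that the straightening ``is essentially the bookkeeping already carried out in the proof of Proposition~\ref{prop:Psi is an algebra homomorphism}'' is only morally right: that proposition works in $\End_{\qn}(\mix)^{\op}$ using relations that were \emph{verified} there (Lemma~\ref{lem:relations in the centralizer}), whereas here you must \emph{derive} those same relations inside $F$ from \eqref{rel:1}--\eqref{rel:9} before you can straighten---this is the content of the paper's list (1)--(8). Once that is done, your count $2^a\cdot 2^{r+s-a}=2^{r+s}$ per walled Brauer monomial matches the paper's Step~2 exactly.
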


\begin{proof}
We observe that the subalgebra of $\BB$ generated by $s_i$'s ($i=1,
\ldots, r-1, r+1, \ldots,  r+s-1$) is isomorphic to the group
algebra of $\Sigma_r \times \Sigma_s$. Also we have $\sigma \er
\sigma^{-1}=e_{\sigma(r), \sigma(r+1)} \ \ \text{for} \ \ \sigma \in
\Sigma_r \times \Sigma_s.$
  Since every element $d \in \BB$ can be decomposed into the form \eqref{eq:basis element form of B_r,s},
the elements $s_1, \ldots, s_{r-1},s_{r+1}, \ldots, s_{r+s-1}$,
$\er$
   and $c_1, \ldots, c_{r+s}$ generate the associative superalgebra $\BB$.
  By direct calculations, one can verify that these elements satisfy the above relations.

  Now take the associative superalgebra $F$ over $\C$ generated by
  $s_i$ ($i=1, \ldots, r-1, r+1, \ldots,  r+s-1$), $\er$
   and $c_j$ ($1 \leq j \leq r+s$) with the above defining relations.
  Then $\BB$ is a quotient of $F$. In particular, $\dim_{\C}(\BB) \leq \dim_{\C} F$.
  Therefore, to prove our theorem, it is enough to show that $\dim_{\C} F \leq \dim_{\C}(\BB)$.

Let $\Sigma$ be the subalgebra of $F$ generated by $s_i$'s  and let
$\ts_i$ be the transposition in $\Srs$ corresponding to $i=1,
\ldots, r-1, r+1, \ldots,  r+s-1$. We define an algebra homomorphism
$\psi: \C(\Srs) \longrightarrow \Sigma$ by $\ts_i \mapsto s_i$. It
is a well-defined surjective homomorphism. So $\Sigma$ is a quotient
of $\C (\Sigma_r \times \Sigma_s)$. Let $e_{p,q}:=\sigma e_{r,r+1}
\sigma^{-1}$, where $\sigma= s_{q-1} \cdots s_{r+1} s_{p} \cdots
s_{r-1}$ for $1 \leq p \leq r-1, \ r+2 \leq q \leq r+s$. Set
$c_A:=c_{a_1} \cdots c_{a_m}$ for a non-empty set $A=\{a_1< \cdots<
a_m\} \subset \{1, \cdots, r+s \}$ and $c_A:=1$ for an empty set
$A$. We  define
   \begin{align*}
   X:=\{ c_P e_{p_1,q_1}   & \cdots  e_{p_a,q_a} c_{Q} \sigma  \;  | \; \text{
(i) $1 \leq p_1 < \cdots < p_a \leq r$,}\\
& \text{(ii)  $r+1 \leq q_i \leq r+s$ and $q_i$ are all distinct,}\\
&\text{(iii) $\sigma \in \Sigma$, and $^{\exists}
\widetilde{\sigma}$ such that $\psi(\widetilde{\sigma})=\sigma$, and
$\widetilde{\sigma}^{-1}(p_1) < \cdots <\widetilde{\sigma}^{-1}(p_a)$,}\\
&\text{(iv) $ P \subset \{p_1, \ldots, p_a \}$, $Q \subset \{1,
\ldots, r+s \} \setminus \{q_1, \ldots, q_a \} $} \}.
\end{align*}

 We will prove our assertion  in two steps:

{\bf Step 1:} Every word in the generators of $F$ belongs to the
linear span of elements of $X$.

{\bf Step 2:} The number of elements in $X$ is less than or equal to
the dimension of $\BB$.

\vskip3mm

\noindent {\it Proof of Step 1}\,:

For $1 \leq i < j \leq r$ or $r+1 \leq i <j \leq r+s$, we define
$$(i \ j) := s_i s_{i+1} \cdots s_{j-2} s_{j-1} s_{j-2} \cdots s_{i+1}
s_i \in \Sigma.$$
 For $1 \leq j < i \leq r$ or $r+1 \leq j < i \leq r+s$, define
 $(i \ j) := (j \ i)^{-1} \in \Sigma$. From the relations
\eqref{rel:1}--\eqref{rel:9}, we obtain the following relations:
\begin{align*}
& (1) \ \ (p'\ q') \epq =\epq (p'\ q'),\\
 & (2) \ \ (p  \ p')\epq = \eppq (p \ p'), \ (q \ q') \epq =\epqp (q \ q'),  \\
 & (3) \ \ \epq e_{p',q'}=e_{p',q'} \epq,\\
 & (4) \ \ \epq \epqp=\epq (q \ q'), \ \epq \eppq=\epq (p  \ p'), \\
 & (5) \ \ \epq e_{p',q'} (p \ p') (q \ q') =\epq e_{p', q'} , \\
 &  (6) \ \  c_q \epq =c_p e_{p,q},\  \epq c_q = \epq c_p,\\
 & (7) \ \ c_{p'} e_{p,q}=\epq c_{p'},\\
 & (8) \ \ \epq^2=0, \ \  \epq c_p \epq=0.
  \end{align*}
Here, all  $p,q,p',q'$ are admissible and distinct.

Suppose that the relations (1)--(8) are satisfied. For a given word
$w$ in the generators of $F$, using the relations \eqref{rel:7}, we
can interchange the place of $c_i$ with the place of $s_j$. From the
relation \eqref{rel:1} and the definition of $e_{p,q}$, we may write
$w$ in terms of the elements $e_{p,q}$. From the relations (1) and
(2), we may move $e_{p,q}$ in the left-hand side of all $s_i$'s. We
would like to move $c_i$ in the left-hand side of all $e_{p,q}$'s or
in the right-hand side of all $e_{p,q}$'s.

From the relation (7), we move each $c_j$ in the left-most side of $w$,
except for $c_p$'s which are in the right-hand side of all $e_{p,q}$'s.
By the relations \eqref{rel:6} and (6), we have
$$w= \pm c_{X} e_{a_1,b_1} \cdots e_{a_{\ell},b_{\ell}} e_{i_1, i'_1} c_{i_1}
 e_{a'_1, b'_1} \cdots e_{a'_x, b'_x} e_{i_2,i'_2}c_{i_2} \cdots $$
 for some set $X$.
 Using the relations (1)--(4) and (8), we rearrange
 $e_{a_1,b_1} \cdots e_{a_{\ell},b_{\ell}} e_{i_1, i'_1}$
to obtain $e_{h_1, g_1} \cdots e_{h_m,g_m} s_{x_1} \cdots s_{x_u}$
or $0$, where $h_j,g_j$ are all distinct. From the relation
\eqref{rel:7}, we obtain $s_{x_1} \cdots s_{x_m}c_{i_1}=c_{k_1}
s_{x_1} \cdots s_{x_m}$ for some $k_1$. If all $h_{j}$ is not $k_1$,
then we can shift $c_{k_1}$ to the left-hand side of $e_{h_1, g_1}
\cdots e_{h_m,g_m}$. If $h_{j}=k_1$ for some $j$, $e_{h_p,g_p}$ ($p
\neq j$) can be shifted to the right-hand side of $c_{k_1}$. In this
way, we can obtain $w=0$ or
$$w=\pm c_{A} e_{k_1, k'_1} c_{k_1} e_{k_2, k'_2}c_{k_2} \cdots
e_{k_t, k'_t} c_{k_t} e_{j_1, j'_1} \cdots e_{j_u,j'_u} c_B \sigma
,$$ for some set $A,B$, $\sigma \in \Sigma$, and $j_i,j'_i$ are all
distinct. If there is $e_{k_t,k'_t}$ in $e_{j_1, j'_1} \cdots
e_{j_u,j'_u}$, then $w=0$ by (3),(8). If all $j'_i$ is not $k'_t$,
then we can shift $c_{k_t}$ to the right-hand side of all
$e_{p,q}$'s using the relation (6),(7). If there is $p$ such that
$j'_p=k'_t$ and $j_p \neq k_t$, then we have
$$w=\pm \cdots e_{k_{t-1},k'_{t-1}}c_{k_{t-1}}c_{j_p} e_{k_t, k'_t} e_{j_1, j'_1} \cdots e_{j_u,j'_u} c_B \sigma. $$
If $j_p=k_{t-1}$, $c_{j_p}$ becomes a constant by relation
\eqref{rel:6}. If not, we can shift $c_{j_p}$ to the left-hand side
of $e_{k_{t-1},k'_{t-1}}$. In this way, we shift $c_{j_p}$ to the
left-side of all $e_{p,q}$'s or $c_{j_p}$ becomes a constant.
Repeating this process on $c_{k_1}, \cdots, c_{k_{t-1}}$, we can
shift all $c_i$ to the left-hand side or right-hand side of all
$e_{p,q}$'s, or obtain $w=0$.

 Combining the relations \eqref{rel:6}, (3),(4),(6)--(8), we
finally obtain $w=0$ or
 $$w= \pm c_P e_{p_1, q_1} \cdots e_{p_a, q_a} c_{Q}
\sigma,$$
where $P \subset \{ p_1, \cdots, p_a \}$, $Q
\subset \{ 1, \cdots, r+s \} \setminus  \{q_1, \ldots, q_a \}$ and
$\sigma \in \Sigma$.

Since $\psi$ is surjective, there exists an element
$\widetilde{\sigma} \in \Srs$ such that
$\psi(\widetilde{\sigma})=\sigma$. If $\tsigma^{-1}(p_1) >
\tsigma^{-1}(p_2)$, we replace $\sigma$ with $(p_1 \ p_2) (q_1 \
q_2) \sigma$ using the  relation (5). Clearly, $\psi((p_1 \ p_2)'
(q_1 \ q_2)' \tsigma)=(p_1 \ p_2) (q_1 \ q_2) \sigma$, where $(x \
y)'=\ts_x \cdots \ts_{y-2} \ts_{y-1} \ts_{y-2} \cdots \ts_x \in
\Srs$. In this way, we can find $\sigma \in \Sigma$ satisfying the
condition (ii). Therefore, every word in generators of $F$ belongs
in the linear span of the elements of $X$.

The relations (1)--(4) were verified in the proof of \cite[Theorem
4.10]{E}. The relation \eqref{rel:4} implies
$e_{r-1,r+2}e_{r,r+1}=e_{r,r+1}e_{r-1,r+2}$.

For the relation (5), we first prove $e_{r,r+1} e_{r-1,r+2}
s_{r-1}s_{r+1}=e_{r,r+1}e_{r-1,r+2}$. Note that
\begin{align*}
  e_{r,r+1}e_{r-1,r+2}s_{r-1}s_{r+1}&= \er s_{r-1}s_{r+1}\er \ \ \ &\text{by \eqref{rel:1}} \\
  &=(\er s_{r-1} \er) s_{r-1} s_{r+1} \er \ \ \ &\text{by \eqref{rel:3}} \\
  &=\er (s_{r+1} s_{r+1})s_{r-1} \er s_{r-1} s_{r+1} \er \ \ \ &\text{by \eqref{rel:1}} \\
 & =\er s_{r+1} \er s_{r+1}s_{r-1} \er s_{r-1} s_{r+1} \ \ \ &\text{by \eqref{rel:4}} \\
 & = \er e_{r-1,r+2}  \ \ \ &\text{by \eqref{rel:3}}.
\end{align*}
For $(p,q) \neq (r,r+1) ,(r-1, r+2)$,  we have
\begin{align*}
 & \epq e_{r,r+1} (p \ r)(r+1 \ q)\\
 & =(r+2 \ q)(r-1 \ p) e_{r-1, r+2} (r+2 \ q )(r-1 \ p) e_{r, r+1} (p \ r) (r+1 \ q)\\
 & =(r+2 \ q)(r-1 \ p) e_{r-1, r+2} e_{r, r+1} (r+2 \ q )(r-1 \ p)  (p \ r) (r+1 \ q)\\
 & = (r+2 \ q)(r-1 \ p) e_{r-1, r+2} e_{r, r+1} (r-1 \ r)(r+1 \ r+2) (r-1 \ p) (r+2 \ q)\\
 & = (r+2 \ q)(r-1 \ p) e_{r-1, r+2} e_{r, r+1}  (r-1 \ p) (r+2 \ q) \\
 & =e_{p, q} e_{r, r+1}.
\end{align*}
Here, if $p=r-1$ or $q=r+2$, we define $(r-1 \  p)=(r+2 \ q)=1$. The general case can be verified in a similar manner.

From the relation \eqref{rel:8}, we obtain
\begin{align*}
c_q \epq &= c_q (r+1 \  q) (r \ p) e_{r, r+1} (r+1 \  q) (r \ p) = (r+1 \ q)( r \ p) c_{r+1} e_{r,r+1}  (r+1 \  q) (r \ p)\\
         &=  (r+1 \ q)( r \ p) c_{r} e_{r,r+1}  (r+1 \  q) (r \ p) =c_p (r+1 \ q)( r \ p) e_{r,r+1}  (r+1 \  q) (r \ p)\\
         &= c_p e_{p,q}.
\end{align*}
The other relations in (6), (7) can be checked similarly.

The relations $\epq^2=0$ and $\epq c_p \epq =0$ follow from
\eqref{rel:1}, \eqref{rel:2} and  \eqref{rel:1}, \eqref{rel:7}, \eqref{rel:9}, respectively.

\vskip3mm

\noindent {\it Proof of Step 2}\,:

One can compute the dimension of $\BB$ by counting the
$(r,s)$-superdiagrams with $i$ horizontal edges in each row and then
summing up over all $i$. Thus we obtain

$$\dim_{\C}(\BB)=\sum_{i=0}^{\min(r,s)} 2^{r+s} \left(\binom{r}{i}
\binom{s}{i}
  \; i! \right)^2 (r-i)!(s-i)!. $$

Now we count the number of the elements in $X$. We define a set
\begin{align*}
Y= \{ ( a, &(p_1,q_1) , \ldots, (p_a,q_a), P, Q, \sigma )\ | \ \text{(i) } 0 \leq a \leq \min(r,s),\\
&\text{(ii) }1 \leq p_1 < \cdots < p_a \leq r, \text{  and  } r+1 \leq q_i \leq r+s  \text{  are all distinct},\\
&\text{(iii) }  P \subset \{p_1, \ldots, p_a \},\\
&\text{(iv) }  Q \subset \{1, \ldots, r+s \} \setminus \{q_1, \ldots, q_a \},\\
&\text{(v) } \sigma \in \Sigma_r \times \Sigma_s, \text{  and  } \sigma^{-1}(p_1) < \cdots <\sigma^{-1}(p_a)  \}.
\end{align*}
We know $|X| \leq |Y|$ and we will count the number of elements of
$Y$. For given $0 \leq a \leq \min(r,s)$, the number of all distinct
subsets of $\{p_1, \ldots, p_a \}$ is $2^a$ and the number of all
distinct subsets of $\{1, \ldots, r+s \} \setminus \{q_1, \ldots,
q_a \}$ is $2^{r+s-a}$. Observe that the number of distinct
$\left((p_1,q_1) , \ldots, (p_a,q_a)\right)$'s satisfying the
condition (ii) is $ \dbinom{r}{a}  \dbinom{s}{a} a!$. Since $\sigma
\in \Sigma_r \times \Sigma_s$ and $\sigma^{-1}(p_1) < \cdots
<\sigma^{-1}(p_a)$, the number of $\sigma$'s satisfying these
conditions is $ \dbinom{r}{a} (r-a)!s!$. Therefore
$$|Y|=\sum_{a=0}^{\min(r,s)} 2^{r+s}
         \left( \binom{r}{a}\binom{s}{a}a!\right) \left( \binom{r}{a}(r-a)! s!\right).$$
From the identity $s!=  \dbinom{s}{a} a! (s-a)!$, we get
$|Y|=\dim_{\C}(\BB)$.

  Combining Step 1 and Step 2, we obtain $\dim_{\C} F \leq |X|  \leq
  \dim_{\C}(\BB)$, which completes the proof.
\end{proof}

\begin{remark} \label{rem:presentation of B_r,s} \hfill
  \begin{enumerate}
  \item If $r=0$ or $s=0$, then we omit the generator $e_{r,r+1}$ and the corresponding relations.

  \item Since $c_{i+1}=s_i c_i s_i$ for admissible $i$, the even
generators together with $c_{1}, c_{r+1}$ generate the whole associative superalgebra $\BB$.

   \item All the relations \eqref{rel:1}--\eqref{rel:4}
   involving $s_i, \er$ ($i=1, \ldots, r-1, r+1, \ldots,  r+s-1$)
also appear in the presentation for the walled Brauer algebra; see \cite[Corollary 4.5]{H}
or \cite[Theorem 4.1]{N}. The following relation in \cite[Corollary 4.5]{H}
$$s_{r-1} s_{r+1} \er s_{r+1} s_{r-1} \er=  \er s_{r-1} s_{r+1} \er $$
can be derived from the relations \eqref{rel:1},\eqref{rel:3} and \eqref{rel:4}.
 \end{enumerate}
\end{remark}

\vskip 1em

\end{document}